\documentclass[12pt,a4paper]{amsart}
\usepackage{amsfonts, amsmath, amssymb, amscd, amsthm, array}

\usepackage{multicol}
\usepackage{subfig}
\usepackage{color}
\usepackage[all]{xy}
\usepackage{hyperref,url}

\hoffset -1.35cm \voffset -1cm \textwidth=6in \textheight=8.7in
\tolerance=9000 \emergencystretch=5pt \vfuzz=2pt
\parskip=1.5mm

\makeatletter
\def\blfootnote{\xdef\@thefnmark{}\@footnotetext}
\makeatother

\newtheorem{thm}{Theorem}[section]
\newtheorem*{thmA}{Theorem A}
\newtheorem*{thmB}{Theorem B}
\newtheorem*{thmC}{Theorem C}
\newtheorem{cor}[thm]{Corollary}
\newtheorem{lemma}[thm]{Lemma}
\newtheorem{prop}[thm]{Proposition}

\theoremstyle{definition}
\newtheorem{df}[thm]{Definition}
\theoremstyle{remark}
\newtheorem{rem}[thm]{Remark}
\newtheorem{Ex}[thm]{Example}


\newcommand{\G}{\mathfrak{G}}
\newcommand{\ga}{\Gamma}
\newcommand{\GG}{\Gamma\mathfrak{G}}
\newcommand{\ISG}{\mathcal{I}\mathcal{S}\mathfrak{G}}

\newcommand{\N}{\mathbb{N}}
\newcommand{\Z}{{\mathbb Z}}
\newcommand{\D}{{\mathbb D}}
\newcommand{\vls}{{\rm lv}}
\newcommand{\vfs}{{\rm fv}}
\newcommand{\pc}{{\rm Pc}}
\newcommand{\nafs}{non-abelian free subgroup}
\newcommand{\ff}{$\mathbb{F}_2$}
\newcommand{\graphx}{\Gamma}

\newcommand{\graphz}{\Delta}

\newcommand{\No}{{\rm N}}

\DeclareMathOperator{\link}{link}
\DeclareMathOperator{\supp}{supp}
\DeclareMathOperator{\esupp}{esupp}
\DeclareMathOperator{\axis}{axis}
\DeclareMathOperator{\rank}{rank}
\DeclareMathOperator{\frank}{frank}

\def\coloneqq{\mathrel{\mathop\mathchar"303A}\mkern-1.2mu=}
\newcommand{\gen}[1]{\left\langle#1\right\rangle}

\begin{document}

\title[Tits alternatives for graph products]{Tits alternatives for graph products}
\address{School of Mathematics,
University of Southampton, Highfield, Southampton, SO17 1BJ, United Kingdom.}
\author{Yago Antol\'{i}n}
\email[Yago Antol\'{i}n]{yago.anpi@gmail.com}

\author{Ashot Minasyan}
\email[Ashot Minasyan]{aminasyan@gmail.com}

\thanks{This work was supported by the EPSRC grant EP/H032428/1. The first author was also supported by MCI (Spain) through project MTM2008--1550.}

\date{\footnotesize\today}

\begin{abstract} We discuss various types of Tits Alternative for subgroups of graph products of groups,
and prove that, under some natural conditions, a graph product of groups satisfies a given form of Tits Alternative if and only if each vertex group satisfies this alternative.
As a corollary, we show that every finitely generated subgroup of a graph product of virtually solvable groups is either virtually solvable or large.
As another corollary, we prove that every non-abelian subgroup of a right angled Artin group has an epimorphism onto the free group of rank $2$.
In the course of the paper we develop the theory of parabolic subgroups, which allows to describe the structure of subgroups of graph products
that contain no non-abelian free subgroups. We also obtain a number of results regarding the stability of some group properties under taking graph products.
\end{abstract}

\keywords{Graph products, Tits Alternative,  right angled Coxeter groups, right angled Artin groups, graph groups.}

\subjclass[2010]{Primary 20F65, secondary 20E07.}
\maketitle

\setcounter{tocdepth}{1}
\tableofcontents


\section{Introduction}

In 1972 J. Tits \cite{Tits} proved that a finitely generated linear group either is virtually solvable, or contains a copy of
the free group  $\mathbb{F}_2$ of rank $2$.  Nowadays such a dichotomy is called the \textit{Tits Alternative}.
This alternative is a powerful result with important consequences. It was used by M. Gromov in his proof of the
famous ``\textit{polynomial growth}'' theorem \cite{Gromov-polyn_gr}; it also shows that linear groups
satisfy \textit{von Neumann conjecture}: every linear group is either amenable or contains a \nafs.

The Tits Alternative can be naturally modified by substituting ``finitely generated'',  ``virtually solvable'' and ``contains a \nafs'' with other conditions of a similar form.
An example of a result of this type is the theorem of G. Noskov and E. Vinberg \cite{NoskovVinberg} claiming that every subgroup of a
finitely generated Coxeter group is either virtually abelian or large.
Recall that a group is said to be \emph{large} if it has a finite index subgroup which maps onto a non-abelian free group. A large group always contains a \nafs, but not vice-versa.

Let us now formally describe the possible versions of the Tits alternative that we are going to consider.
Let $\mathcal{I}$ be a collection of cardinals; a group $G$ is said to be \textit{$\mathcal{I}$-generated},
if there is a generating set $X$ of $G$ and $\lambda \in \mathcal{I}$ such that $|X|\le \lambda.$

\begin{df}
Suppose that $\mathcal I$ is a collection of cardinals, $\mathcal C$ is a class of groups and  $G$ is a group.
We will say that $G$ \textit{satisfies the  Tits Alternative relative to} $(\mathcal{I},\mathcal C)$ if for any $\mathcal{I}$-generated subgroup
$H \leqslant G$ either $H \in \mathcal{C}$ or $H$ contains a non-abelian free subgroup.

The group $G$ \textit{satisfies the Strong Tits Alternative relative to}  $(\mathcal{I},\mathcal C)$ if for any $\mathcal{I}$-generated subgroup
$H \leqslant G$ either $H \in \mathcal{C}$ or $H$ is large.
\end{df}

In this terminology the theorem of Tits \cite{Tits}, mentioned above, tells us that
linear groups satisfy the Tits Alternative relative to $(\mathcal{I}_f,\mathcal{C}_{vsol}),$
where $\mathcal{I}_f$ is the collection of all finite cardinals and $\mathcal{C}_{vsol}$ is the class of virtually solvable groups.
For the class $\mathcal{C}_{vab}$, of virtually abelian groups, the Tits Alternative relative to $(\mathcal{I}_f,\mathcal{C}_{vab})$ is known to hold
in any word hyperbolic group \cite{Gromov}, $Out(\mathbb{F}_n)$ \cite{BFH}, where $\mathbb{F}_n$ is the free group
of rank $n$, and in any group acting freely and properly on a finite dimensional CAT($0$) cubical complex \cite{Sageev-Wise}.

Let $\mathcal{I}_\omega$ be the collection of all countable cardinals. The result of G. Noskov and E. Vinberg \cite{NoskovVinberg} in this language becomes:
finitely generated Coxeter groups satisfy the Strong Tits Alternative relative to $(\mathcal{I}_\omega,\mathcal{C}_{vab})$.

The goal of this paper is to prove that many forms of Tits Alternative are stable under graph products.
Let $\graphx$ be a simplicial graph and suppose that $\mathfrak{G}=\{G_v \mid v\in V\graphx\}$ is a collection of groups (called \textit{vertex groups}).
The \emph{graph product} $\graphx \mathfrak{G}$, of this collection of groups with respect to $\graphx$,
is the group obtained from the free product of the $G_v$, $v \in V\graphx$, by adding the relations
$$[g_v, g_u]=1  \text{ for all }  g_v\in G_v,\, g_u\in G_u \text{ such that $\{v,u\}$ is an edge of } \graphx.$$

The graph product of groups is a natural group-theoretic construction generalizing free products (when $\graphx$ has no edges) and direct products (when $\graphx$ is a complete graph) of
groups $G_v$, $v \in V\graphx$. Graph products were first introduced and studied by E. Green in her Ph.D. thesis \cite{Green}. Further properties of graph products have been investigated by
S. Hermiller and J. Meier in \cite{H-M} and by T. Hsu and D. Wise in \cite{HsuWise}.

Basic examples of graph products are \textit{right angled Artin groups}, also called \textit{graph groups} (when all vertex groups are infinite cyclic), and \textit{right angled Coxeter groups}
(when all vertex groups are cyclic of order $2$).

Throughout this paper $\Z$ will denote the group of integers under addition and
$\mathbb{D}_\infty \cong \Z/2\Z*\Z/2\Z$ will denote the infinite dihedral group.
We will be interested in  collections of cardinals $\mathcal{I}$ and classes of groups $\mathcal C$ satisfying the following properties:
\begin{itemize}
  \item[(P$0$)] (\textit{closed under isomorphisms}) if $A,B$ are groups, $A \in \mathcal{C}$ and $A \cong B$ then $B \in \mathcal{C}$;
  \item[(P$1$)] (\textit{closed under $\mathcal{I}$-generated subgroups}) if $A \in \mathcal{C}$ and $B \leqslant A$ is an $\mathcal{I}$-generated subgroup, then $B \in \mathcal{C}$; 	
  \item[(P$2$)] (\textit{closed under direct products of $\mathcal{I}$-generated groups}) if $A,B \in \mathcal{C}$ are $\mathcal{I}$-generated then  $A\times B \in \mathcal{C}$;
  \item[(P$3$)] (\textit{contains the infinite cyclic group}) $\mathbb{Z} \in \mathcal{C}$;
  \item[(P$4$)] (\textit{contains the infinite dihedral group}) if $\Z/2\Z \in \mathcal{C}$ then $\D_\infty \in \mathcal{C}$.
\end{itemize}

\begin{thmA} Suppose that $\mathcal{I}$ is a collection of cardinals and $\mathcal{C}$ is a class of groups enjoying {\rm (P$0$)--(P$4$)}. Let
$\ga$ be a finite graph and let $\mathfrak{G}=\{G_v \mid v \in V\}$ be a family of groups.
Then the graph product $G=\GG$ satisfies the Tits Alternative relative to $(\mathcal{I},\mathcal{C})$ provided each vertex group $G_v$, $v \in V$, satisfies this alternative.
\end{thmA}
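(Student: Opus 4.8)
The plan is to induct on the number of vertices $|V\ga|$ of the finite graph $\ga$. In the base case $|V\ga|=1$ the group $G=\GG$ is a single vertex group, which satisfies the Tits Alternative relative to $(\mathcal I,\mathcal C)$ by hypothesis; and since deleting a vertex carrying the trivial group does not change $G$, I may assume every $G_v$ is non-trivial. For the inductive step I fix an $\mathcal I$-generated subgroup $H\leqslant G$ and may assume $H$ contains no \nafs{} (otherwise the conclusion holds by definition), the goal being to show $H\in\mathcal C$. I will use freely that $\mathcal I$-generation passes to conjugates (a conjugate generating set has the same cardinality) and to homomorphic images (the images of a generating set generate the image).

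The engine of the proof is the theory of parabolic subgroups, which I would develop first. Using Green's normal form for elements of $\GG$, one defines the parabolic subgroups as the conjugates $gG_\Delta g^{-1}$ of the full-subgraph subgroups $G_\Delta$, $\Delta\subseteq\ga$, and proves that every subgroup has a well-defined \emph{parabolic closure} $\pc(H)$, the unique minimal parabolic subgroup containing it. Writing $\pc(H)=gG_\Delta g^{-1}$, there are two cases. If $\Delta\subsetneq\ga$ is proper, then $g^{-1}Hg\leqslant G_\Delta$, and $G_\Delta$ is itself a graph product (over $\Delta$) of groups that still satisfy the Tits Alternative; since $g^{-1}Hg$ is $\mathcal I$-generated and contains no \nafs, the induction hypothesis gives $g^{-1}Hg\in\mathcal C$, whence $H\in\mathcal C$ by (P$0$). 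This reduces everything to the case $\pc(H)=G$.

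The main obstacle is the structural dichotomy for the case $\pc(H)=G$, which I would isolate as a separate theorem: \emph{if $\pc(H)=G$ and $H$ contains no \nafs, then either $\ga$ is a non-trivial join $\ga=\ga_1+\ga_2$ (so that $G=G_{\ga_1}\times G_{\ga_2}$), or $H$ is infinite cyclic or infinite dihedral.} The hard direction is producing a \nafs{} whenever $\ga$ is not a join and $H$ is not virtually cyclic: this is where the parabolic machinery does its real work, via a ping-pong argument applied to a suitable pair of hyperbolic elements of $H$ whose axes (equivalently, combinatorial supports) can be arranged, using that the complement of $\ga$ is connected and that $\pc(H)=G$, so that they generate a \nafs. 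The anchoring base of this analysis is the two-vertex non-adjacent case $G=G_u*G_v$, handled directly by the Kurosh subgroup theorem: any subgroup not conjugate into a factor and free of \nafs{}s must be $\Z$ or $\D_\infty$.

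Granting this dichotomy, the two outcomes are dispatched using the closure properties. If $H\cong\Z$ then $H\in\mathcal C$ by (P$3$). If $H\cong\D_\infty=\Z/2\Z*\Z/2\Z$, then the two involutions lie in distinct vertex groups, so each such $\Z/2\Z$ is an $\mathcal I$-generated subgroup of a vertex group containing no \nafs; by the vertex Tits Alternative $\Z/2\Z\in\mathcal C$, and then (P$4$) gives $\D_\infty\in\mathcal C$. Either way $H\in\mathcal C$ by (P$0$). In the join case $G=G_{\ga_1}\times G_{\ga_2}$, let $\pi_i$ denote the projections and set $H_i=\pi_i(H)$. Since a free group is projective, any epimorphism onto a \nafs{} splits, so a \nafs{} in $H_i$ would lift to one in $H$; hence each $H_i$ is $\mathcal I$-generated and contains no \nafs. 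As $\ga_i\subsetneq\ga$, the induction hypothesis applies to the graph product $G_{\ga_i}$ and yields $H_i\in\mathcal C$. Then $H_1\times H_2\in\mathcal C$ by (P$2$), and $H\leqslant H_1\times H_2$ is $\mathcal I$-generated, so $H\in\mathcal C$ by (P$1$). This closes the induction and completes the proof.
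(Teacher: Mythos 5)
Your global architecture coincides with the paper's: induction on $|V\ga|$, reduction into the parabolic closure, the join case dispatched through $H\leqslant \rho_A(H)\times\rho_B(H)$ with (P$1$)--(P$2$) (your observation that a surjection onto a free group splits is exactly why no \nafs{} survives in the projections), and the outcomes $\Z$, $\D_\infty$ handled by (P$3$)--(P$4$). Your structural dichotomy for the case $\pc_\ga(H)=G$ is precisely the paper's Structure Theorem (Theorem \ref{thm:structure}), and the derivation of Theorem A from it is sound modulo two small repairs: (a) your assertion that the two involutions of $H\cong\D_\infty$ ``lie in distinct vertex groups'' is false as stated --- torsion elements of $G$ are only conjugate into parabolics over cliques --- and what you actually need is that $2$-torsion in $G$ forces $2$-torsion in some vertex group, which is Corollary \ref{cor:torfree} and itself requires the retraction-kernel machinery of Section \ref{S:kernel}; (b) you should first dispose of the degenerate case where $\mathcal I$ contains no non-zero cardinal, so that $\Z/2\Z$ is indeed $\mathcal I$-generated when you invoke the vertex alternative.

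The genuine gap is the proof of the dichotomy itself. ``Ping-pong on a suitable pair of hyperbolic elements'' cannot be the whole argument: letting $H$ act on the Bass--Serre tree of the splitting $G=G_A*_{G_C}G_B$ over the link of a vertex, a subgroup with no \nafs{} may fail to admit any pair of independent hyperbolic elements in three ways, and your sketch addresses none of them. (i) Every element of $H$ may be elliptic; since $H$ need not be finitely generated ($\mathcal I$ may contain infinite cardinals), Serre's lemma does not directly yield a global fixed vertex --- the paper needs Proposition \ref{prop:fin_parab_cl} (a finite subset with the same parabolic closure) to conclude that $H$ lies in a proper parabolic, contradicting $\pc_\ga(H)=G$. (ii) $H$ may preserve a line; this is handled via the kernel of the induced map $H\to\D_\infty$ together with Corollary \ref{cor:norm_in_irred}, i.e., normalizers of non-trivial subgroups of proper parabolics lie in proper parabolics, which in turn rests on Proposition \ref{prop:normalizer_of_parab} and the irreducibility of $\ga$. (iii) Most seriously, $H$ may fix an end of the tree: then all hyperbolic elements translate along rays sharing that end, no ping-pong pair exists, and yet $H$ is not obviously virtually cyclic. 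The paper resolves this by showing the elliptic elements of $H$ form a normal subgroup (reducing to (i) and the normalizer argument when it is non-trivial) and, when every non-trivial element is hyperbolic, by a translation-length division argument forcing $H\cong\Z$. In effect the paper invokes Culler--Morgan's reducibility theorem and then analyses the three reducible configurations; that case analysis, not the ping-pong (which is already packaged inside Culler--Morgan), is the real content of the hard direction, and it is what your proposal would still have to supply.
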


To establish the strong version of Tits alternative we will need one more condition on $\mathcal{I}$ and $\mathcal C$:
\begin{itemize}
  \item[(P$5$)] (\textit{$\mathcal{I}$-locally profi}) if $A \in \mathcal{C}$ is  non-trivial and $\mathcal{I}$-generated then $A$ possesses a proper finite index subgroup.
\end{itemize}

\begin{thmB} Let $\mathcal{I}$ be a collection of cardinals and let $\mathcal C$ be a class of groups enjoying the properties {\rm (P$0$)-(P$5$)},
such that $\mathcal I$ contains all finite cardinals or at least one infinite cardinal.  Suppose that $\ga$ is a finite
graph and $\mathfrak{G}=\{G_v \mid v\in V\graphx\}$ is a family of groups.
Then the graph product $G=\GG$ satisfies the Strong Tits Alternative relative to $(\mathcal{I},\mathcal{C})$ provided  each vertex group $G_v$, $v \in V\graphx$,
satisfies this alternative.
\end{thmB}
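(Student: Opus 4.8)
The plan is to run the same induction on $|V\ga|$ that underlies Theorem A, but to upgrade every occurrence of ``contains $\mathbb{F}_2$'' to the stronger conclusion ``is large'', using property (P$5$) as the engine that manufactures the finite quotients needed to turn a free subgroup into a \emph{virtual} free quotient. The restriction on $\mathcal{I}$ (all finite cardinals, or one infinite cardinal) is there precisely so that the subgroups I manipulate along the way—finite-index subgroups of $H$, and intersections of $H$ with parabolic subgroups—remain $\mathcal{I}$-generated, keeping both the inductive hypothesis and (P$5$) applicable. Fix an $\mathcal{I}$-generated $H \leqslant G$ with $H \notin \mathcal{C}$; since largeness is an isomorphism invariant, it suffices to find a finite-index $H_0 \leqslant H$ with an epimorphism $H_0 \twoheadrightarrow \mathbb{F}_2$.

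For the induction, the base case $|V\ga| = 1$ is the hypothesis on the vertex groups. If $\ga$ is complete then $G$ is a direct product, which I would dispatch by a separate direct-product lemma: a subgroup of a product projects onto each factor, so by (P$1$)--(P$2$) it lies in $\mathcal{C}$ unless some projection is large, in which case it surjects onto a large group and is large. Otherwise choose a vertex $v$ with $\link(v) \neq V \setminus \{v\}$ and use the standard splitting
\[
G \;=\; G_{V \setminus \{v\}} \;*_{\,G_{\link(v)}}\; G_{\mathrm{st}(v)}, \qquad G_{\mathrm{st}(v)} = G_v \times G_{\link(v)},
\]
so that $G$ acts on the associated Bass--Serre tree $T$ with vertex stabilizers conjugate into the two smaller graph products and edge stabilizers conjugate into the parabolic $G_{\link(v)}$. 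If $H$ is elliptic, it embeds in a conjugate of $G_{V\setminus\{v\}}$ or of $G_{\mathrm{st}(v)}$, each a graph product over fewer vertices; the inductive hypothesis makes $H$ large, and since largeness is intrinsic there is no issue of its failing to survive in the larger ambient group.

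The substance lies in the non-elliptic case, where Bass--Serre theory equips $H$ with a nontrivial graph-of-groups decomposition whose edge groups have the form $H \cap g\,G_{\link(v)}\,g^{-1}$, i.e.\ $\mathcal{I}$-generated subgroups of a parabolic. I would separate the irreducible case (two independent hyperbolic isometries) from the reducible case ($H$ stabilizes a line). In the irreducible case the inductive hypothesis classifies each edge group as lying in $\mathcal{C}$ or being large; applying (P$5$) to the former (largeness handling the latter) produces finite-index subgroups in which these edge groups have proper index of the right size. Pushing the splitting through the resulting finite quotients realizes a finite-index subgroup of $H$ as an amalgam (or HNN extension) of finite groups over a finite group, with index data forbidding the degenerate $(2,2)$ pattern; such a group is non-elementary virtually free, hence large, so $H$ surjects onto a large group and is itself large. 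In the reducible case $H$ surjects onto a group $Q$ that is infinite cyclic or infinite dihedral, with kernel $K$ lying in a parabolic: if $K \in \mathcal{C}$ then (P$2$)--(P$4$) force $H \in \mathcal{C}$, contradicting our assumption, while if $K$ is large one promotes its virtual free quotient across the extension to conclude that $H$ is large.

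The main obstacle I anticipate is the finite-quotient step in the irreducible case: the vertex groups are \emph{not} assumed residually finite, so residual finiteness of $G$ is unavailable off the shelf, and I would instead have to show that the relevant edge parabolics are separable by bootstrapping finite quotients from the Strong Tits Alternative on the vertex groups together with (P$5$). A second genuine difficulty is the bookkeeping in the reducible case: one must check that the extension $1 \to K \to H \to Q \to 1$ really closes up under (P$2$)--(P$4$)—this is exactly where (P$4$) and the $\D_\infty$ condition are consumed—and that $K$ is itself $\mathcal{I}$-generated, which is the reason the theorem constrains the shape of $\mathcal{I}$.
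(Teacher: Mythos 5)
Your top-level skeleton (induction on $|V\ga|$, the amalgam $G=G_A*_{G_C}G_B$, Bass--Serre theory, (P$5$) as the source of finite quotients, Lemma \ref{lem:card_of_fi}-type use of ampleness of $\mathcal I$) matches the paper, but the two steps you flag as ``obstacles'' are genuine gaps, and in both cases the paper's proof works by \emph{avoiding} the step rather than supplying it. In the irreducible case, you propose to push the graph-of-groups decomposition of $H$ through finite quotients, which requires separability of the edge parabolics $H\cap gG_Cg^{-1}$; this is not bootstrappable from the Strong Tits Alternative plus (P$5$) (profi-ness of $\mathcal{I}$-generated groups in $\mathcal C$ is far weaker than subgroup separability, and the vertex groups are otherwise arbitrary). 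The paper instead arranges that the edge intersections are \emph{trivial}: in Theorem \ref{thm:preBC} one first projects to $\rho_A(H)$; if that is not virtually cyclic, induction already makes it large, and otherwise the Structure Theorem forces $\rho_A(H)\cong\Z$ (or $\D_\infty$, handled by passing to an index-$2$ subgroup), whereupon Lemma \ref{lem:virt_cyc-parab} --- which rests on parabolic closures and Corollary \ref{cor:norm_in_irred} --- gives $\rho_A(H)\cap gG_Cg^{-1}=\{1\}$, hence $gHg^{-1}\cap G_C=\rho_A\left(gHg^{-1}\cap G_C\right)=\{1\}$. Then Kurosh's subgroup theorem (Lemma \ref{lem:H_large_mapsonto}) exhibits $H$ as an honest free product $\left(\mbox{\Large $*$}_i H_i\right)*F$, and (P$5$) is applied directly to the free factors $H_i$ (which are $\mathcal I$-generated as retracts) to produce a non-elementary virtually free quotient. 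No separability is ever needed. Note also that this dichotomy only works once $H$ projects non-trivially to \emph{every} vertex group, which the paper secures via Theorem \ref{thm:kernel} (the kernel of $\rho_{\{a\}}$ is again a graph product) and the rewriting procedure of Proposition \ref{P:comp}; your proposal has no mechanism for the case $H\leqslant\ker\rho_{\{v\}}$. You also omit the fixed-end branch of the Culler--Morgan trichotomy (your ``reducible'' case covers only invariant lines), which matters since $H$ need not be finitely generated when $\mathcal I$ contains infinite cardinals.

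The reducible case as you state it is incorrect on both horns. If $1\to K\to H\to Q\to 1$ with $Q\in\{\Z,\D_\infty\}$ and $K\in\mathcal C$, properties (P$2$)--(P$4$) do \emph{not} force $H\in\mathcal C$: (P$2$) closes $\mathcal C$ under direct products only, not extensions (for $\mathcal C=\mathcal{C}_{vab}$, a group $\Z^2\rtimes_\phi\Z$ with $\phi$ of infinite order is $\mathcal C$-by-$\Z$ yet neither in $\mathcal C$ nor large). And if $K$ is large, largeness does not ``promote across the extension'': $K$ has infinite index in $H$, and a large infinite-index normal subgroup does not make the ambient group large. The paper's missing idea here is the normalizer machinery of Section \ref{S:parabolic}: if the kernel $N$ (of the action on the line, or the set of elliptic elements in the fixed-end case) is non-trivial, then $H\leqslant \No_G(N)\leqslant\No_G(\pc_\ga(N))$, and by Proposition \ref{prop:normalizer_of_parab} together with irreducibility of $\ga$ this normalizer is a \emph{proper parabolic} subgroup, so $H$ drops into a smaller graph product and induction applies; if $N=\{1\}$, then $H$ embeds in $\D_\infty$, hence is virtually cyclic and already covered by the Theorem A side of the argument. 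Thus no extension analysis is ever performed, and that substitution --- proper parabolic normalizers in place of extension closure, and trivial edge intersections in place of separability --- is precisely what your proposal lacks.
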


Examples of classes of groups with properties {\rm (P$0$)-(P$5$)}, for $\mathcal{I}=\mathcal{I}_f$,
are the classes consisting of virtually abelian groups, virtually nilpotent groups, (virtually) polycyclic groups, (virtually) solvable groups and, more generally, elementary amenable groups.
It is easy to see that all of these properties are necessary. For example, if groups from $\mathcal C$ contain no free subgroups and
$A\in \mathcal{C}$ is a non-trivial group without proper finite index subgroups then  $A*A$ will possess no non-trivial finite quotients.
It follows that $A*A$ cannot be large; on the other hand, $A*A \notin \mathcal{C}$ as it contains a copy of \ff.
Thus we see that property (P$5$) is necessary for the claim of Theorem B.

For any $n \in \mathbb{N}$, let  $\mathcal{C}_{sol-n}$ be the class of all solvable groups of derived length at most $n$.
Denote by $\mathcal{C}_{vsol-n}$ the class of all groups
that are virtually in $\mathcal{C}_{sol-n}$. It is easy to see that the pairs $(\mathcal{I}_f,\mathcal{C}_{sol-m})$ and
$(\mathcal{I}_f,\mathcal{C}_{vsol-n})$  enjoy the properties {\rm (P$0$)-(P$5$)}, for all $m,n \in \N$ with $m \ge 2$
(when $m=1$ the class $\mathcal{C}_{sol-1}$, of abelian groups, does not satisfy (P$4$)).
Hence applying Theorem B we achieve

\begin{cor}\label{cor:vsol-n} Suppose that $\mathcal C=\mathcal{C}_{sol-m}$ for some $m \ge 2$ or $\mathcal{C}=\mathcal{C}_{vsol-n}$ for some $n \ge 1$.
Let $G$ be a graph product of groups from  $\mathcal C$.
Then any finitely generated subgroup $H$ of $G$ either is large or belongs to $\mathcal{C}$.
\end{cor}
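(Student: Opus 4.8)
The plan is to derive the corollary directly from Theorem B, applied with $\mathcal{I}=\mathcal{I}_f$, the collection of all finite cardinals (so that the hypothesis on $\mathcal{I}$ in Theorem B is satisfied). Two things need to be checked before Theorem B can be invoked. First, as already observed in the paragraph preceding the statement, both pairs $(\mathcal{I}_f,\mathcal{C}_{sol-m})$ with $m\ge 2$ and $(\mathcal{I}_f,\mathcal{C}_{vsol-n})$ with $n\ge 1$ satisfy the properties (P$0$)--(P$5$); here the restriction $m\ge 2$ is precisely what guarantees $\D_\infty\in\mathcal{C}_{sol-m}$ and hence secures (P$4$), while (P$5$) holds because a non-trivial finitely generated (virtually) solvable group admits a non-trivial finitely generated abelian quotient and therefore a proper finite-index subgroup.

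Second, I would observe that each vertex group $G_v\in\mathcal{C}$ satisfies the Strong Tits Alternative relative to $(\mathcal{I}_f,\mathcal{C})$ \emph{trivially}: by property (P$1$) every finitely generated subgroup of $G_v$ is again in $\mathcal{C}$, so the first alternative ($H\in\mathcal{C}$) always holds and the free-versus-large dichotomy is never needed. Thus all hypotheses of Theorem B are in place.

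With these verifications done, Theorem B gives the conclusion whenever the underlying graph is finite. For a graph product over an arbitrary graph $\Gamma$ I would reduce to the finite case as follows: if $H\le G$ is finitely generated, then each of its finitely many generators has finite support, so all generators are supported on a finite set $S\subseteq V\Gamma$. Hence $H$ is contained in the canonical copy inside $G$ of the graph product $\Gamma'\mathfrak{G}'$, where $\Gamma'$ is the full subgraph of $\Gamma$ spanned by $S$ and $\mathfrak{G}'=\{G_v\mid v\in S\}$. This $\Gamma'\mathfrak{G}'$ is a graph product of groups from $\mathcal{C}$ over a \emph{finite} graph, so Theorem B applies to it; since $H$ is $\mathcal{I}_f$-generated, either $H\in\mathcal{C}$ or $H$ is large. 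As both membership in $\mathcal{C}$ and largeness are intrinsic properties of $H$, independent of the ambient group, this is exactly the desired dichotomy.

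The only genuine (and minor) obstacle is this last reduction, which relies on the fact that the natural homomorphism $\Gamma'\mathfrak{G}'\to G$ induced by the inclusion of a full subgraph is injective, i.e.\ that $\Gamma'\mathfrak{G}'$ embeds as a parabolic subgroup of $G$. This is part of the structure theory of parabolic subgroups developed earlier in the paper, so I would simply invoke it. If instead one adopts the convention that all graphs under consideration are finite, then the corollary is immediate from Theorem B and no reduction is required.
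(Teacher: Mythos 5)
Your proposal is correct and follows essentially the same route as the paper: the authors likewise verify (P$0$)--(P$5$) for $(\mathcal{I}_f,\mathcal{C}_{sol-m})$ and $(\mathcal{I}_f,\mathcal{C}_{vsol-n})$, invoke Theorem B (with the vertex groups satisfying the alternative trivially via (P$1$)), and handle infinite graphs exactly as you do, via Remark \ref{rem:fg_sbgp-fin_supp} that a finitely generated subgroup lies in the full subgroup $G_A$ over the finite set $A=\supp_\ga(H)$, which by the Normal Form Theorem is itself a graph product over $\ga_A$.
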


Observe that the graph product $G$ could be taken over an infinite graph in the above corollary because a finitely generated subgroup is always contained
in a ``sub-graph product'' with only finitely many vertices (see Remark \ref{rem:fg_sbgp-fin_supp}).
It is also worth noting that the group $\mathbb Q$, of rational numbers, can be embedded into a finitely generated group from $\mathcal{C}_{sol-3}$ (this follows from the fact that there are
finitely generated center-by-metabelian groups whose center is the free abelian group of countably infinite rank -- see \cite{P.Hall-1};
taking a quotient of such a group by an appropriate central
subgroup produces an embedding of an arbitrary countable abelian group into the center of a finitely generated group from $\mathcal{C}_{sol-3}$).
Consequently, the free square of a finitely
generated group from $\mathcal{C}_{sol-3}$ can contain $\mathbb{Q}*\mathbb{Q}$, which is neither solvable nor large.
Thus the assumption that $H$ is finitely generated in Corollary \ref{cor:vsol-n} is essential.

Applying Theorem B to finitely generated right angled Coxeter groups, we see that these groups satisfy the Strong Tits Alternative relative to
$(\mathcal{I}_\omega,\mathcal{C}_0)$, where $\mathcal{C}_0$ is the smallest class of groups containing $\D_\infty$, closed under isomorphisms, taking subgroups and direct products.
It is easy to see that $\mathcal{C}_0\subset \mathcal{C}_{vab} \cap \mathcal{C}_{sol-2} \cap \mathcal{C}_{polyc}$,
where $\mathcal{C}_{polyc}$ denotes the class of polycyclic groups; in particular all groups in $\mathcal{C}_0$ will be finitely generated.
A theorem of F. Haglund and D. Wise \cite{HaglundWise} states that every finitely generated Coxeter group is virtually a subgroup of some
finitely generated right angled Coxeter group. Therefore we recover the result of Noskov-Vinberg  \cite{NoskovVinberg}, mentioned above:

\begin{cor}\label{cor:coxeter} Let $G$ be a finitely generated Coxeter group.
If $H\leqslant G$ is an arbitrary subgroup then either $H$ is large or
$H$ is finitely generated and virtually abelian.
\end{cor}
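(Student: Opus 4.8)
The plan is to reduce the statement, via the theorem of Haglund--Wise, to the case of a finitely generated right angled Coxeter group, where the Strong Tits Alternative relative to $(\mathcal{I}_\omega,\mathcal{C}_0)$ is already available as a consequence of Theorem B. First I would invoke \cite{HaglundWise} to fix a finite index subgroup $G_0 \leqslant G$ together with an embedding of $G_0$ into a finitely generated right angled Coxeter group $W$; henceforth I identify $G_0$ with its image, so that $G_0 \leqslant W$. Given an arbitrary subgroup $H \leqslant G$, I set $H_0 \coloneqq H \cap G_0$, so that $H_0$ has finite index in $H$ and $H_0 \leqslant W$.

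Since $W$ is finitely generated it is countable, and hence so is its subgroup $H_0$; in particular $H_0$ is $\mathcal{I}_\omega$-generated. Applying the Strong Tits Alternative for $W$ to $H_0$, I obtain the dichotomy that either $H_0$ is large, or $H_0 \in \mathcal{C}_0$. In the first case $H_0$ contains a finite index subgroup $K$ admitting an epimorphism onto a non-abelian free group; as $K$ also has finite index in $H$, the group $H$ is itself large, and we are done.

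In the remaining case $H_0 \in \mathcal{C}_0$. Using the inclusion $\mathcal{C}_0 \subset \mathcal{C}_{vab}$ together with the fact (noted in the excerpt) that every group in $\mathcal{C}_0$ is finitely generated, $H_0$ is a finitely generated virtually abelian group. Since $H_0$ has finite index in $H$, the group $H$ inherits both properties: it is finitely generated, being a finite index overgroup of a finitely generated group, and it is virtually abelian, since a finite index extension of a virtually abelian group is virtually abelian. This yields exactly the two alternatives asserted by the corollary.

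I expect the main obstacle here to be organisational rather than technical, since the real content is carried by Theorem B and by \cite{HaglundWise}. The points that genuinely require care are the correct direction of the virtual embedding supplied by Haglund--Wise, namely that a finite index subgroup of $G$ (and not $G$ itself) embeds into $W$, and the verification that largeness, virtual abelianness and finite generation are all preserved when passing between $H_0$ and its finite index overgroup $H$. In particular one must recall that largeness is a commensurability invariant, which is precisely what transports it from $H_0$ back up to $H$ in the first case.
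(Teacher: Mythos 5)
Your proposal is correct and follows essentially the same route as the paper, which likewise deduces the corollary from Theorem B applied to finitely generated right angled Coxeter groups with the pair $(\mathcal{I}_\omega,\mathcal{C}_0)$, combined with the Haglund--Wise virtual embedding and the observations that $\mathcal{C}_0\subset\mathcal{C}_{vab}$ and that every group in $\mathcal{C}_0$ is finitely generated. The finite-index bookkeeping you spell out --- passing largeness, finite generation and virtual abelianness between $H_0=H\cap G_0$ and $H$ --- is exactly what the paper leaves implicit.
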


Finally we would like to suggest the strongest possible (in our opinion) form of Tits alternative as follows.
\begin{df}
Suppose that $\mathcal I$ is a collection of cardinals, $\mathcal C$ is a class of groups and  $G$ is a group.
We will say that $G$ \textit{satisfies the Strongest Tits Alternative relative to} $(\mathcal{I},\mathcal C)$ if for any $\mathcal{I}$-generated subgroup
$H \leqslant G$ either $H \in \mathcal{C}$ or $H$ has an epimorphism onto the free group \ff{} of rank $2$.
\end{df}

The group $G= \gen{a,b,c \mid a^2b^2=c^2}$ is an example of a torsion-free large group (\cite{Baumslag-Pride}) which does not map onto \ff{} (\cite{Lyndon}).
Thus the Strongest Tits Alternative is indeed more restrictive than the Strong Tits Alternative.
The evident groups, satisfying the Strongest Tits Alternative relative to the class 
of torsion-free abelian groups (and arbitrary $\mathcal I$), are residually free groups.

For a collection of cardinals $\mathcal I$, we define a new collection of cardinals $\mathcal{I}^-$, by saying that a cardinal $\lambda$ belongs to
$\mathcal{I}^-$ if and only if $\lambda+1 \in \mathcal{I}$ (see Sub-section \ref{s:cardinals} for the definition of the addition of cardinals). For instance, if
$\mathcal{I}=\{2\}$ then $\mathcal{I}^-=\{1\}$; note also that $\mathcal{I}_f^-=\mathcal{I}_f$ and $\mathcal{I}_\omega^-=\mathcal{I}_\omega$.
In order to prove the strongest alternative, we need an additional property of the pair $(\mathcal{I},\mathcal{C})$:

\begin{itemize}
\item[(P$6$)] (\textit{$\mathcal{I}^-$-locally indicable}) if $A \in \mathcal{C}$ is  non-trivial and $\mathcal{I}^-$-generated then $A$ has an infinite cyclic quotient.
\end{itemize}

Evidently (P$6$) implies that every group in $\mathcal C$ is torsion-free
(provided $\mathcal{I}^-$ contains at least one non-zero cardinal). Basic examples of groups satisfying (P$6$) with $\mathcal{I}=\mathcal{I}_f$ are torsion-free nilpotent
groups (see \cite[5.2.20]{Robinson}).

\begin{thmC} Let $\mathcal{I}$ be a collection of cardinals and let $\mathcal C$ be a class of groups enjoying the properties {\rm (P$0$)-(P$3$)} and {\rm (P$6$)}.
Suppose that $\ga$ is a finite graph and $\mathfrak{G}=\{G_v \mid v\in V\graphx\}$ is a collection of groups.
Then the graph product $G=\GG$ satisfies the Strongest Tits Alternative relative to $(\mathcal{I},\mathcal{C})$
provided each vertex group $G_v$, $v \in V\graphx$, satisfies this alternative.
\end{thmC}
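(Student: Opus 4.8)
The plan is to induct on the number of vertices $n=|V\graphx|$. When $n\le 1$ the group $G$ is trivial or equals a single vertex group, so the statement holds by hypothesis. For the inductive step I would first reduce to the case in which $H$ has full support. Using the theory of parabolic subgroups developed earlier, if $H$ is conjugate into a proper parabolic subgroup $g(\Lambda\mathfrak G)g^{-1}$ with $\Lambda\subsetneq\graphx$, then after conjugating $H$ sits inside $\Lambda\mathfrak G$, a graph product over a graph with fewer vertices whose vertex groups still satisfy the Strongest Tits Alternative, and the inductive hypothesis finishes the argument. So assume $\supp(H)=\graphx$. If $\graphx$ is complete then $G=\prod_v G_v$; writing $\pi_v\colon G\to G_v$ for the projections and $H_v=\pi_v(H)$, each $H_v$ is an $\mathcal I$-generated subgroup of $G_v$, so either some $H_v$ maps onto $\ff{}$ (whence $H\twoheadrightarrow H_v\twoheadrightarrow\ff{}$) or every $H_v\in\mathcal C$, in which case the embedding $H\hookrightarrow\prod_v H_v\in\mathcal C$ (by (P2), applied finitely many times) together with (P1) gives $H\in\mathcal C$.

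Now suppose $\graphx$ is not complete and pick a vertex $v$ with $\mathrm{st}(v)=\{v\}\cup\link(v)\subsetneq\graphx$. Then $G$ splits as the amalgamated product $G=P*_C Q$ with $P=(\graphx\setminus v)\mathfrak G$, $Q=G_v\times C$ and $C=\link(v)\mathfrak G$, where $P$ and $Q$ are graph products over graphs with strictly fewer vertices. Since $\supp(H)=\graphx$, the subgroup $H$ is conjugate into neither $P$ nor $Q$, so $H$ acts on the Bass--Serre tree of this splitting without a global fixed point. Bass--Serre theory then realises $H$ as the fundamental group of a graph of groups whose vertex groups are the intersections $A_i=H\cap gPg^{-1}$, $H\cap gQg^{-1}$ and whose edge groups are the intersections $H\cap gCg^{-1}$; minimality of the action forces each edge group to be proper in its adjacent vertex groups.

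To produce an epimorphism onto $\ff{}$ I would collapse all the edge data: killing the normal closure of the union of the edge groups maps $H$ onto a free product $\widehat H=\big(\ast_i\overline{A_i}\big)*F_r$, where $\overline{A_i}$ is the quotient of the vertex group $A_i$ by the normal closure of its incident edge groups and $F_r$ is free of rank equal to the first Betti number $r$ of the quotient graph. The two retractions $Q=G_v\times C\twoheadrightarrow G_v$ and $P\twoheadrightarrow(\graphx\setminus\mathrm{st}(v))\mathfrak G$ (killing $C$) identify each $\overline{A_i}$ with a group surjecting onto a subgroup $\rho(A_i)$ of a strictly smaller graph product. By Grushko-type rank considerations these factors are $\mathcal I^-$-generated, which is precisely the hypothesis under which (P6) applies. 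Applying the inductive hypothesis to each $\rho(A_i)$, either some $\rho(A_i)$ (hence $\overline{A_i}$, hence $H$) surjects onto $\ff{}$ and we are done, or every nontrivial $\rho(A_i)$ lies in $\mathcal C$ and is therefore indicable by (P6); in the latter case each nontrivial factor $\overline{A_i}$ maps onto $\Z$ (using (P3)), so $\widehat H$ surjects onto a free group of rank $s+r$, where $s$ is the number of nontrivial factors. If $s+r\ge 2$ this yields $H\twoheadrightarrow\ff{}$.

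The \emph{main obstacle} is twofold and sits in this last step. First, one must control the parabolic edge groups $H\cap gCg^{-1}$ precisely enough to guarantee that each vertex quotient $\overline{A_i}$ is nontrivial exactly when $\rho(A_i)$ is, i.e.\ that the kernels of the two retractions meet the vertex groups of $H$ only in the normal closures of the incident edge groups; this is exactly what the developed theory of parabolic subgroups is needed for. Second, the residual low-complexity cases $s+r\le 1$, where $H$ acts trivially on quotients or lineally on the tree, must be shown to force $H\in\mathcal C$ rather than an $\ff{}$-quotient; here I would invoke the structural description of subgroups containing no copy of $\ff{}$ (equivalently the already established Theorem A, whose hypotheses hold since the torsion-freeness forced by (P6) makes condition (P4) vacuous) to conclude that such $H$ belong to $\mathcal C$. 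Throughout, the recurring technical point is the careful bookkeeping of cardinalities, passing between $\mathcal I$ and $\mathcal I^-$ via Grushko's theorem so that (P6) may legitimately be applied to the free factors.
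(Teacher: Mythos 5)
Your outer frame (induction on $|V\graphx|$, reduction to subgroups not contained in a proper parabolic, the splitting $G=P*_CQ$ along the star of a vertex, and Grushko--Neumann to pass from $\mathcal I$ to $\mathcal I^-$ before applying (P$6$)) agrees with the paper's, but the engine of your argument fails at exactly the point you flag as the ``main obstacle'', and you do not resolve it. Quotienting the graph-of-groups decomposition of $H$ by the normal closures of the edge groups gives no control: the factor $\overline{A_i}$ can be trivial even when $A_i$ is large, and your retractions $Q\twoheadrightarrow G_v$ and $P\twoheadrightarrow G_{V\graphx-\mathrm{st}(v)}$ kill \emph{all} of $C$, far more than the incident edge groups, so $\rho(A_i)$ can be trivial while $\overline{A_i}$, and indeed $H$, is enormous. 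Nothing in your argument forces $s+r\ge 2$: the quotient graph can be a single edge with both retraction images trivial, and for non-finitely-generated $H$ (which you must handle, since $\mathcal I$ is arbitrary) the action may fix an end of the tree, producing an ascending ray of vertex groups with every $\overline{A_i}$ trivial and $r=0$, i.e.\ $\widehat H=\{1\}$. Your proposed fallback for $s+r\le 1$ is moreover circular: you invoke the classification of subgroups with no copy of $\mathbb{F}_2$, but in the degenerate cases $H$ typically \emph{does} contain $\mathbb{F}_2$ (if it did not, Theorem A would already place it in $\mathcal C$), and degeneracy of the splitting in no way excludes free subgroups. A smaller slip: ``$\supp(H)=\graphx$'' is not the right reduction hypothesis, since full support does not prevent $H$ from lying in a proper parabolic; one must use the parabolic closure and essential support (Proposition \ref{prop:min_parab}, Lemma \ref{lem:proj_on_esupp}).

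The paper resolves the obstacle by a different mechanism: rather than quotienting a decomposition with nontrivial edge groups, it reduces to the situation where $H$ meets \emph{every} conjugate of the edge group $G_C$ trivially, so that the Kurosh subgroup theorem makes $H$ itself an honest free product $\left(\mbox{\Large $*$}_{i} H_i\right)*F$, to which Grushko--Neumann and $\mathcal I^-$-local indicability apply directly (Lemma \ref{lem:H_large_mapsonto}). The reduction is the real content and needs machinery absent from your sketch: the Structure Theorem (Theorem \ref{thm:structure}, via Culler--Morgan reducibility, which in particular disposes of the fixed-end case) shows that in the residual situation $\rho_A(H)\cong\Z$ (the $\D_\infty$ case being excluded under (P$6$) by torsion-freeness); Lemma \ref{lem:virt_cyc-parab} then yields $gHg^{-1}\cap G_C=\{1\}$ for all $g$; and, crucially, Theorem \ref{thm:kernel} (the kernel of a vertex retraction is again a graph product) together with Proposition \ref{P:comp} arranges, after finitely many rewritings, that $H$ projects nontrivially onto every vertex group of an irreducible graph (Theorems \ref{thm:main_crit} and \ref{thm:preBC}) --- precisely the hypothesis that rules out your degenerate cases. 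Without an analogue of this reduction, the step from the graph-of-groups decomposition to an $\mathbb{F}_2$-quotient does not go through.
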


Consider any pair $(\mathcal{I},\mathcal C)$ satisfying the conditions of Theorem C, such that $\mathbb{F}_2 \notin \mathcal{C}$.
Then for an arbitrary non-trivial $\mathcal{I}^-$-generated group $A \in \mathcal{C}$, $G=A*\Z$ is $\mathcal{I}$-generated and
contains a copy of $\mathbb{F}_2$, hence  $G \notin \mathcal{C}$.
So, the Strongest Tits Alternative (relative to $(\mathcal{I},\mathcal{C})$)
for $G$ would imply that $G$ maps onto $\mathbb{F}_2$, hence the image of $A$ under
this homomorphism is non-trivial, and, so it possesses an epimorphism onto $\mathbb Z$. Thus the assumption of (P$6$) in Theorem C is indeed necessary.

For the first application of Theorem C, let us take $\mathcal{I}=\{2\}$ and let $\mathcal{C}$ be any class of torsion-free
groups enjoying the properties (P$0$)-(P$3$) (e.g., $\mathcal{C}$ could be the class of torsion-free amenable groups).
In this case $\mathcal{I}^-$-local indicability is equivalent to torsion-freeness, and
so (P$6$) holds automatically. Recalling that a $2$-generated group maps onto \ff{} if and only if it is isomorphic to \ff, we achieve

\begin{cor}\label{cor:2-gen}
Let $\mathcal{C}$ be a non-empty class of torsion-free groups, closed under isomorphisms, direct products and taking subgroups.
Suppose that $G$ is a graph  product with all vertex groups from $\mathcal C$. Then for any $2$-generated subgroup $H \leqslant G$, either $H \in \mathcal C$ or
$H \cong \mathbb{F}_2$.
\end{cor}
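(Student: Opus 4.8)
The plan is to derive the statement from Theorem C, applied with $\mathcal{I}=\{2\}$ (so that $\mathcal{I}^-=\{1\}$). The first task is to verify that the pair $(\{2\},\mathcal{C})$ enjoys the hypotheses (P$0$)--(P$3$) and (P$6$) required there. Property (P$0$) is precisely the assumed closure under isomorphisms. Since $\mathcal{C}$ is closed under taking arbitrary subgroups, it is in particular closed under $2$-generated subgroups, giving (P$1$), and closure under direct products gives (P$2$) (one only needs binary products of $2$-generated groups). For (P$6$), observe that an $\mathcal{I}^-$-generated group is cyclic; a non-trivial torsion-free cyclic group is infinite cyclic and admits $\mathbb{Z}$ as an infinite cyclic quotient, so every non-trivial $\mathcal{I}^-$-generated member of $\mathcal{C}$ is indicable, as needed.

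The one property requiring slight care is (P$3$), i.e.\ $\mathbb{Z}\in\mathcal{C}$. If $\mathcal{C}$ contains only the trivial group, then all vertex groups are trivial, $G$ is trivial, and the conclusion is immediate since its only subgroup lies in $\mathcal{C}$. Otherwise $\mathcal{C}$ contains a non-trivial group $A$; being torsion-free, $A$ has an element of infinite order generating a copy of $\mathbb{Z}$, and closure under subgroups yields $\mathbb{Z}\in\mathcal{C}$. In this non-degenerate case all of (P$0$)--(P$3$) and (P$6$) hold. Finally, each vertex group $G_v$ belongs to $\mathcal{C}$, so by closure under subgroups every subgroup of $G_v$ lies in $\mathcal{C}$; hence each $G_v$ satisfies the Strongest Tits Alternative relative to $(\{2\},\mathcal{C})$ trivially, the first alternative always occurring.

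I would then invoke Theorem C to conclude that $G$ satisfies the Strongest Tits Alternative relative to $(\{2\},\mathcal{C})$: for every $2$-generated subgroup $H\leqslant G$, either $H\in\mathcal{C}$ or $H$ admits an epimorphism onto $\mathbb{F}_2$. The last step is to upgrade the second alternative to an isomorphism, using the Hopficity of free groups. If $H=\gen{x,y}$ and $\phi\colon H\twoheadrightarrow\mathbb{F}_2$ is an epimorphism, then composing the homomorphism $\psi\colon\mathbb{F}_2\to H$ sending a fixed basis to $(x,y)$ with $\phi$ produces a surjective endomorphism of $\mathbb{F}_2$, which must be an automorphism; this forces $\psi$ to be injective, and as $\psi$ is also surjective we obtain $H\cong\mathbb{F}_2$. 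The genuine content of the corollary is entirely absorbed by Theorem C; granting that, the only points demanding attention are the degenerate case of (P$3$) and this concluding Hopficity argument, both of which are elementary.
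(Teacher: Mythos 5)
Your proposal is correct and follows essentially the same route as the paper: Theorem C applied with $\mathcal{I}=\{2\}$, the observation that $\mathcal{I}^-$-local indicability reduces to torsion-freeness so (P$6$) is automatic, and the standard Hopficity fact that a $2$-generated group mapping onto $\mathbb{F}_2$ is isomorphic to $\mathbb{F}_2$. Your explicit treatment of the degenerate case where $\mathcal{C}$ contains only trivial groups (so that (P$3$) could fail) is a small point the paper passes over silently, and it is handled correctly.
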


Corollary \ref{cor:2-gen} generalizes  a classical result of A. Baudisch \cite{Baud}, who proved that any
two non-commuting elements of a right angled Artin group generate a copy of $\mathbb{F}_2$.
In fact, we are now able to say much more about subgroups of right angled
Artin groups. Indeed, taking $\mathcal{I}=\mathcal{I}_\omega$ and $\mathcal C$ to be the class of finitely generated torsion-free abelian groups,
and applying Theorem~C we obtain the following result:

\begin{cor}\label{cor:RAAG}
Any subgroup of a finitely generated right angled Artin group is either free abelian of finite rank or maps onto $\mathbb{F}_2$.
\end{cor}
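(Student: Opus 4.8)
The plan is to deduce the corollary directly from Theorem C by making the right choice of the pair $(\mathcal{I},\mathcal{C})$. A finitely generated right angled Artin group $G$ is, by definition, a graph product $G=\GG$ over a finite graph $\ga$ in which every vertex group is infinite cyclic, so it suffices to exhibit a class $\mathcal C$ and a collection of cardinals $\mathcal I$ enjoying (P$0$)--(P$3$) and (P$6$), for which each vertex group $\Z$ satisfies the Strongest Tits Alternative relative to $(\mathcal I,\mathcal C)$. I would take $\mathcal{I}=\mathcal{I}_\omega$, the collection of all countable cardinals, and let $\mathcal C$ be the class of finitely generated torsion-free abelian groups (equivalently, the groups isomorphic to $\Z^n$ for some $n\ge 0$).

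Next I would verify the required properties for this pair. Property (P$0$) is immediate. For (P$1$), any subgroup of a finitely generated abelian group is again finitely generated, torsion-free and abelian, so membership in $\mathcal C$ passes to all subgroups (in particular to $\mathcal{I}_\omega$-generated ones). Property (P$2$) holds since $\Z^m\times \Z^n\cong \Z^{m+n}$, and (P$3$) is clear. For (P$6$) I would invoke the equality $\mathcal{I}_\omega^-=\mathcal{I}_\omega$ noted in the text: a non-trivial group in $\mathcal C$ is some $\Z^n$ with $n\ge 1$, and projecting onto a coordinate yields an infinite cyclic quotient, so $\mathcal C$ is $\mathcal{I}_\omega^-$-locally indicable. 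Finally, each vertex group $\Z$ satisfies the Strongest Tits Alternative relative to $(\mathcal{I}_\omega,\mathcal C)$ trivially, since every subgroup of $\Z$ is either trivial or isomorphic to $\Z$, hence lies in $\mathcal C$. Theorem C then applies and tells us that $G$ itself satisfies the Strongest Tits Alternative relative to $(\mathcal{I}_\omega,\mathcal C)$: every $\mathcal{I}_\omega$-generated subgroup of $G$ either belongs to $\mathcal C$ or maps onto \ff.

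The one remaining point — and the only place where the hypothesis that $G$ is finitely generated enters — is to remove the generation restriction on the subgroup. Since $\ga$ is finite and each vertex group is cyclic, $G$ is finitely generated, hence countable; therefore every subgroup $H\leqslant G$ is countable and so countably generated, i.e. $\mathcal{I}_\omega$-generated. Applying the conclusion of the previous paragraph to such $H$, I obtain that either $H$ maps onto \ff{} or $H\in\mathcal C$, the latter meaning $H\cong \Z^n$ for some $n\ge 0$, i.e. $H$ is free abelian of finite rank. This is precisely the assertion of the corollary. I do not expect a genuine obstacle here, as all the substantive content resides in Theorem C; the only points requiring care are the computation $\mathcal{I}_\omega^-=\mathcal{I}_\omega$ in the verification of (P$6$) and the countability argument that upgrades the statement from countably generated subgroups to arbitrary ones.
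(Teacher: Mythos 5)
Your proposal is correct and follows exactly the paper's route: the authors also deduce the corollary from Theorem~C by taking $\mathcal{I}=\mathcal{I}_\omega$ and $\mathcal{C}$ the class of finitely generated torsion-free abelian groups, with the same implicit verifications of (P$0$)--(P$3$) and (P$6$) (using $\mathcal{I}_\omega^-=\mathcal{I}_\omega$) and the same observation that every subgroup of the countable group $G$ is $\mathcal{I}_\omega$-generated. Your write-up merely makes these routine checks explicit, which is fine.
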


The finite generation assumption in the above statement is not very important. Indeed, using Corollary \ref{cor:RAAG} together with Remark  \ref{rem:fg_sbgp-fin_supp},
it is easy to show that any non-abelian subgroup of an arbitrary right angled Artin group maps onto \ff.
An easy consequence of Corollary \ref{cor:RAAG} is that for every non-cyclic subgroup $H$, of a right angled Artin group,
the first Betti number (i.e, the $\mathbb Q$-rank of the abelianization) is at least two.

As far as we know, Corollary \ref{cor:RAAG} gives the first non-trivial family of groups satisfying the Strongest Tits Alternative
(it is not hard to construct right angled Artin groups that are not residually free). The family of  subgroups of right angled Artin groups is very rich
and includes a lot of examples examples (see \cite{Agol,B-B,Crisp-Wiest,HaglundWise,Haglund-Wise-0,Liu,Wise-qch}). However,  Corollary \ref{cor:RAAG} can be used to show that many groups are not
embeddable into right angled Artin groups. More precisely, one can obtain information regarding solutions of an equation in a right angled Artin group
from the solutions of the same equation in free and free abelian groups. For example, we have

\begin{cor}\label{cor:abc} If three elements $a,b,c$ of a right angled Artin group $G$ satisfy $a^m=b^nc^p$ for $m,n,p \ge 2$, then these elements pairwise commute.
\end{cor}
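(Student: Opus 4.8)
The plan is to feed the subgroup $H=\gen{a,b,c}\leqslant G$ into Corollary~\ref{cor:RAAG} and then to show that the ``maps onto $\mathbb{F}_2$'' horn of that dichotomy is incompatible with the relation $a^m=b^nc^p$. Since $a,b,c$ generate a finitely generated subgroup, by Remark~\ref{rem:fg_sbgp-fin_supp} the group $H$ is contained in a sub-graph product over finitely many vertices, which for a right angled Artin group is again a finitely generated right angled Artin group. Thus I may harmlessly assume that $G$ itself is finitely generated and apply Corollary~\ref{cor:RAAG} directly to $H$: either $H$ is free abelian of finite rank, or there is an epimorphism $\phi\colon H\twoheadrightarrow \mathbb{F}_2$.

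In the first case we are done immediately, as a free abelian group is commutative and so $a,b,c$ pairwise commute. It therefore remains to exclude the second case. Assuming $\phi\colon H\twoheadrightarrow\mathbb{F}_2$ exists, I would apply $\phi$ to the given relation to obtain the identity
\[ \phi(a)^m=\phi(b)^n\,\phi(c)^p \]
inside the free group $\mathbb{F}_2$, with $m,n,p\geq 2$.

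The crucial input is the theorem of Lyndon and Schützenberger on the equation $x^M=y^Nz^P$ in a free group: when $M,N,P\geq 2$, every solution $(x,y,z)$ consists of pairwise commuting elements, and hence $x,y,z$ lie in a common cyclic subgroup. Applying this to the triple $\bigl(\phi(a),\phi(b),\phi(c)\bigr)$, I conclude that these three elements generate a cyclic, in particular abelian, subgroup of $\mathbb{F}_2$. But $\phi$ is surjective, so this subgroup is the whole of $\mathbb{F}_2$, contradicting the fact that $\mathbb{F}_2$ is non-abelian. Hence the second alternative is impossible, $H$ is free abelian, and $a,b,c$ pairwise commute, as claimed.

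The main (indeed essentially the only) obstacle is the free-group statement: the whole argument rests on knowing that $x^M=y^Nz^P$ with $M,N,P\geq 2$ admits only ``cyclic'' solutions in a free group. This is exactly the content of the Lyndon--Schützenberger theorem, so I would simply cite it. One could, if self-containment were desired, reprove it via small-cancellation or Bass--Serre-tree arguments, but that lies outside the graph-product machinery developed in this paper and is best left to the reference.
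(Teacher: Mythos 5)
Your proposal is correct and is precisely the argument the paper intends: it deduces the corollary from Corollary \ref{cor:RAAG} (reducing to a finitely generated right angled Artin group via Remark \ref{rem:fg_sbgp-fin_supp}) together with the Lyndon--Sch\"utzenberger theorem, ruling out an epimorphism $\gen{a,b,c}\twoheadrightarrow\mathbb{F}_2$ because the relation would force the generating images to commute. The paper states this as an ``immediate consequence'' of exactly these two ingredients, and your write-up simply makes the same deduction explicit.
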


The above fact is an immediate consequence of Corollary \ref{cor:RAAG} and the result of R. Lyndon and M. Sch\"utzenberger \cite{Lyn-Schutz}, who proved the same statement when $G$ is free.
Corollary \ref{cor:abc} generalizes a theorem of J. Crisp and B. Wiest \cite[Thm. 7]{Crisp-Wiest}, who established its claim in the case when $m=n=p=2$.

The paper is organized as follows. In Section \ref{sec:basic_prop} we recall some basic properties of graph products, and in Section \ref{S:parabolic} we develop the theory of parabolic subgroups of graph products.
The results of this section, together with Bass-Serre theory, allow us to prove, in Section \ref{S:structuretheorem}, the Structure Theorem for subgroups of graph products which
do not contain a copy of \ff{} (Theorem \ref{thm:structure}).
The Structure Theorem is one of the main results of this work, and it is intensively used in the proofs of Theorems A, B and C.
We apply the Structure Theorem to give a more detailed description of ``small'' subgroups of graph products (see Theorem \ref{thm:sharpA}), and Theorem A is
a consequence of this description.

In order to  prove Theorems B and C, we study the kernel of the canonical retraction of a graph product onto one of the vertex groups and prove that this kernel
is again a graph product (Theorem \ref{thm:kernel}).  This fact has a number of interesting applications, which are established in Section \ref{S:kernel}.
In Section \ref{S:proof} we establish a sufficient criterion for a subgroup of a graph product to be large or to map onto \ff{} (Theorem \ref{thm:main_crit}), and then we
prove Theorems B and C. Finally, in Section \ref{S:example} we construct two examples showing that  the claims of Corollaries \ref{cor:2-gen} and \ref{cor:RAAG}
are in a certain sense optimal. Example \ref{ex:2-gen_non_virt_cyc} shows that without the assumption of torsion-freeness, there is no control over
$2$-generated subgroups of graph products; and
Proposition \ref{prop:ex_fr_rank}  demonstrates that there is no connection between the rank
of a non-abelian subgroup of a right angled Artin group and the maximal rank of the free group onto which it can be mapped. The latter is based on
a Rips-type construction, which is presented in Proposition \ref{prop:Rips_constr}.


\section{Preliminaries}\label{sec:basic_prop}

\subsection{Cardinal numbers}\label{s:cardinals}
Our reference for cardinal numbers is \cite{Vaught}.
Cardinals are traditionally well-ordered by injection ($|X|\le |Y|$ if there is an injective map from $X$ to $Y$),
but, assuming the Axiom of Choice, this ordering is equivalent to the ordering defined via surjections, which we will use (see \cite[Proposition 2.1.8]{Vaught}).
Formally, if $\lambda$, $\kappa$ are two cardinals, and $X$, $Y$ are sets with $|X|=\lambda$, $|Y|=\kappa$, then $\lambda \le \kappa$ if and only if
there exists a surjective map $Y\twoheadrightarrow X.$ The addition of cardinals $\lambda$ and $\kappa$ is defined in a natural way: $\lambda+\kappa$ is the cardinal of the union
$X\cup Y$ of two disjoint sets $X$ and $Y$ with $|X|=\lambda$ and $|Y|=\kappa$. A well-known fact tells us that $\lambda+\kappa=\max\{\lambda,\kappa\}$
if at least one of the cardinals $\lambda$, $\kappa$ is infinite (see \cite[Corollary 7.6.2]{Vaught}).
If $m \in \N$ and $\lambda$ is a cardinality of a set $X$ then $m\lambda$ denotes the cardinality of the disjoint union of $m$ copies of $X$; thus if $\lambda$ is an infinite cardinal
then $m\lambda=\lambda$ for any $m\in \N$.

Let $\mathcal{I}$ be a collection of cardinals. We say that a group $G$  is $\mathcal{I}$-generated, if there is a generating set $X$ of $G$ of
cardinality $\lambda$, and a cardinal $\kappa\in\mathcal{I}$ such that $\lambda \le \kappa$.
Remark that the image of an $\mathcal{I}$-generated group under a homomorphism is again $\mathcal{I}$-generated. It also follows from the definition that if $\mathcal J$ is another
collection of cardinals, such that for every $\lambda \in \mathcal{J}$ there is $\kappa \in \mathcal{I}$ with $\lambda \le \kappa$, then any $\mathcal J$-generated
group is also $\mathcal I$-generated. We will say that a collection of cardinals $\mathcal I$
is \textit{ample} if it contains all finite cardinals or if it contains at least one infinite cardinal.

\begin{lemma}\label{lem:card_of_fi} Let $\mathcal I$ be an ample collection of cardinals. If $G$ is an $\mathcal I$-generated group and
$K \leqslant G$ is a subgroup of finite index then $K$ is also $\mathcal I$-generated.
\end{lemma}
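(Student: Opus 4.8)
The plan is to bound the number of Schreier generators of $K$ in terms of a generating set of $G$ and the (finite) index $[G:K]$, and then to use the ampleness of $\mathcal I$ to absorb the finite multiplicative factor contributed by the index.

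First I would fix a generating set $X$ of $G$ with $|X|=\lambda$ together with a cardinal $\kappa\in\mathcal I$ satisfying $\lambda\le\kappa$; these exist because $G$ is $\mathcal I$-generated. Writing $n=[G:K]<\infty$, I would pick a right transversal $T$ of $K$ in $G$ with $1\in T$ (so $|T|=n$) and invoke Schreier's lemma (the Reidemeister--Schreier process): the subgroup $K$ is generated by the set $\{\, t\,x\,\overline{tx}^{\,-1} \mid t\in T,\ x\in X \,\}$, where $\overline{g}$ denotes the representative in $T$ of the coset $Kg$. Consequently $K$ admits a generating set $Y$ with $|Y|\le n\lambda$.

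It then remains to check that $\mathcal I$ contains a cardinal that is $\ge |Y|$, and here I would split into two cases according to whether $\lambda$ is infinite or finite. If $\lambda$ is infinite, then $n\lambda=\lambda$ by the cardinal arithmetic recalled just before the statement, whence $|Y|\le\lambda\le\kappa\in\mathcal I$, and no use of ampleness is needed. If $\lambda$ is finite, then $n\lambda$ is again a finite cardinal, and this is exactly where ampleness is used: either $\mathcal I$ contains all finite cardinals, in which case $n\lambda\in\mathcal I$ directly, or $\mathcal I$ contains some infinite cardinal $\mu$, in which case the finite cardinal $n\lambda$ satisfies $n\lambda\le\mu$. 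In every case $|Y|$ is bounded above by an element of $\mathcal I$, so $K$ is $\mathcal I$-generated.

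The argument is essentially routine, and I do not expect a genuine obstacle; the only point demanding a little care is the bookkeeping between the two cases and the two clauses in the definition of ``ample''. The finite-index factor $n$ is harmless precisely because $n\lambda=\lambda$ when $\lambda$ is infinite, while the ampleness hypothesis is tailored exactly so that multiplying a finite cardinal by the constant $n$ cannot push it outside the reach of $\mathcal I$.
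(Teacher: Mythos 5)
Your proof is correct and follows essentially the same route as the paper: Reidemeister--Schreier to bound the generators of $K$, then cardinal arithmetic ($n\lambda=\lambda$ for infinite $\lambda$) together with ampleness to handle the finite case. The only cosmetic difference is that you use the Schreier generators directly (bound $n\lambda$) where the paper takes a generating subset of $TXT^{-1}$ (bound $k^2\lambda$); this changes nothing in the argument.
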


\begin{proof} By the assumptions, $G=\bigsqcup_{i=1}^k Kt_i$ for some $t_i \in G$, where $k\coloneqq |G:K|$.
Choose a generating set $X$ of $G$ with $|X| \le \lambda$ for some $\lambda \in \mathcal I$.
By a theorem of Reidemeister and Schreier (see \cite[6.1.8]{Robinson}),  $K$ can be generated by a subset $Y$ of the set $TXT^{-1}\subset G$, where $T\coloneqq \{t_1,\dots,t_k\}$.
Consequently $|Y| \le k^2 |X|$. If $X$ is finite then $Y$ is also finite, and hence $K$ is $\mathcal I$-generated by the assumptions on $\mathcal I$.
On the other hand, if $X$ is infinite, then $\lambda$ is an infinite cardinal, and so $|Y|\le k^2 \lambda =\lambda \in \mathcal{I}$, as required.
\end{proof}

\subsection{Graph products}
Let $\Gamma$ be a graph without loops or multiple edges. We will use $V\Gamma$ and $E\Gamma$ to denote the set of vertices and the set of edges of $\Gamma$ respectively.
An edge can be considered as a $2$-element subset of $V\Gamma$. A path in $\Gamma$ of length $n$ from $u$ to $v$ is a sequence of vertices $(u=v_0,v_1,\dots, v_n=v)$ where $\{v_{i-1},v_{i}\}\in E\Gamma$,
for $i=1,\dots, n$.

For any subset $A\subseteq V\Gamma$, by $\Gamma_A$ 
 we will denote the \textit{full subgraph} of $\Gamma$  with vertex set $A.$ That is,
$V\Gamma_A=A$ and $\{a_1,a_2\}\in E\Gamma_A$ if and only if $\{a_1,a_2\}\in E\Gamma$.

The \emph{link} $\link_{\Gamma}(v)$, of a vertex $v \in V\Gamma$, is the subset of vertices adjacent to $v$ (excluding the $v$ itself); in other words,
$\link_{\Gamma}(v)\coloneqq \{u\in V\Gamma \mid \{v,u\}\in E\Gamma \}.$ For a subset $A \subseteq V\Gamma$, we define $\link_\ga(A)\coloneqq \bigcap_{v \in A} \link_\ga(v)$.

Let $\mathfrak{G}=\{G_v\mid v\in V\Gamma\}$ be a family of groups and let $G\coloneqq \Gamma \mathfrak{G}$ be the corresponding graph product.
Any element $g\in G$  may be represented as a \emph{word} $W \equiv (g_1,g_2,\dots, g_n)$ 
 where each $g_i$, called a \emph{syllable} of $W$, is an element of some $G_v$ and $g=g_1g_2\dots g_n$.
The number of syllables is the length of the word. For $1 \le i < j \le n$, we will say that the syllables $g_i$ and $g_j$ \textit{can be
joined together} if $g_i,g_j\in G_v$ for some $v \in V\Gamma$ and  $g_k\in G_{u_k}$, $u_k\in \link_\Gamma (v)$ for $k=i+1,\dots, j-1$. In this case, in $\Gamma\mathfrak{G}$ the word $W$
represents the same element as the words $(g_1,\dots,g_{i-1},g_{i}g_j,g_{i+1},\dots,g_{j-1},g_{j+1},\dots,g_n)$ and $(g_1,\dots,g_{i-1},g_{i+1},\dots,g_{j-1},g_{i}g_j,g_{j+1},\dots,g_n)$,
whose lengths are strictly smaller than the length of $W$.

A word $W \equiv (g_1,g_2,\dots, g_n)$ is \emph{reduced} either if it is empty or 
if $g_i\neq 1$ for all $i$,  and no two distinct syllables of $W$ can be joined together.

We define the following transformations for the word $W$.
\begin{enumerate}
\item[(T1).] Remove a syllable  $g_i$ if $g_i=1$ in $G$.
\item[(T2).] Replace two consecutive syllables $g_i$ and $g_{i+1}$ in the same vertex group $G_v$ with the single syllable $g_ig_{i+1} \in G_v$.
\item[(T3).] (\textit{Syllable shuffling}) For consecutive syllables $g_i\in G_u,$ $g_{i+1}\in G_v$ with $\{u,v\}$ an edge of $\Gamma,$ interchange $g_i$ and $g_{i+1}.$
\end{enumerate}

Note that transformation (T3)  preserves the length of the word, and transformations (T1),(T2) decrease it by $1$.
Evidently starting with some word $W$ and applying finitely many transformations (T1)-(T3) we can obtain a reduced word $W'$, representing the same element of the group $\Gamma \mathfrak{G}$.

The following theorem was first proved by E. Green \cite[Thm. 3.9]{Green} in her thesis. 
\begin{thm}[The Normal Form Theorem]\label{thm:norm_form}
Every element $g\in \Gamma \mathfrak{G} $ can be represented by a reduced word. 
Moreover, if two reduced words represent the same element of the group, then one can be obtained from the other after applying a finite sequence of syllable shuffling.
In particular, the length of a reduced word is minimal among all words representing $g$, and a reduced word represents the identity if and only if it is the empty word. 
\end{thm}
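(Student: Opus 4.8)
The plan is to separate the two assertions. Existence of a reduced word is the easy part and is essentially already recorded in the excerpt: starting from any word representing $g\in\GG$, repeatedly apply the transformations (T1)--(T3); since (T1),(T2) strictly decrease the length while (T3) preserves it, and since a word admitting no further length-decreasing move (even after shuffling) is by definition reduced, this process terminates in a reduced word representing $g$. The real content is the uniqueness up to syllable shuffling, which I would obtain by a van der Waerden style action. Let $\mathcal N$ denote the set of equivalence classes $[W]$ of reduced words under the equivalence relation generated by (T3).

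For each vertex $v\in V\ga$ and each $h\in G_v$ I would define a map $\pi_v(h)\colon\mathcal N\to\mathcal N$. The key combinatorial observation is that a reduced word $W\equiv(g_1,\dots,g_n)$ has \emph{at most one} syllable lying in $G_v$ that can be shuffled to the front: if $g_i,g_j\in G_v$ with $i<j$ could both be brought to the front, then in particular $g_i$ would have to commute with $G_v$, forcing $v$ to be adjacent to itself, which is impossible as $\ga$ has no loops. Given $[W]$, represented by a reduced $W$, I set: if $W$ has such a syllable $g_i\in G_v$, shuffle it to the front to write $W\equiv(g_i,g_1',\dots,g_{n-1}')$ with reduced tail, and put
$$
\pi_v(h)\,[W]=
\begin{cases}
[(hg_i,\,g_1',\dots,g_{n-1}')] & \text{if } hg_i\ne 1,\\
[(g_1',\dots,g_{n-1}')] & \text{if } hg_i=1;
\end{cases}
$$
otherwise (no syllable of $W$ in $G_v$ is shuffleable to the front) put $\pi_v(h)[W]=[W]$ when $h=1$ and $\pi_v(h)[W]=[(h,g_1,\dots,g_n)]$ when $h\ne1$. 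In the last case the prepended word is genuinely reduced precisely because no later syllable of $W$ can be joined to $h$.

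Next I would check that each $\pi_v$ is a well-defined homomorphism $G_v\to\mathrm{Sym}(\mathcal N)$ (independence of the reduced representative chosen for $[W]$, together with $\pi_v(h_1h_2)=\pi_v(h_1)\pi_v(h_2)$ and $\pi_v(1)=\mathrm{id}$), and that whenever $\{u,v\}\in E\ga$ the images of $\pi_u$ and $\pi_v$ commute elementwise. Since $\GG$ is by definition the free product of the $G_v$ modulo the relations $[g_u,g_v]=1$ for adjacent $u,v$, these two facts supply, via the universal property of the graph product, a single homomorphism $\phi\colon G\to\mathrm{Sym}(\mathcal N)$ with $\phi|_{G_v}=\pi_v$. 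Applying $\phi(g)=\phi(g_1)\cdots\phi(g_n)$ to the class of the empty word and using reducedness of $W\equiv(g_1,\dots,g_n)$ at each prepending step, an easy induction on $n$ gives $\phi(g)[\varnothing]=[W]$. Thus $[W]$ depends only on $g$, which is exactly uniqueness up to shuffling. The remaining assertions then follow at once: a reduced word representing the identity has the same class as the empty word, hence (shuffling preserving length) is empty, and minimality of the reduced length is immediate since all reduced words for a given $g$ are shuffle-equivalent and therefore equin length.

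The main obstacle I anticipate is purely the bookkeeping behind the second paragraph: verifying that $\pi_v(h)[W]$ does not depend on which reduced representative of $[W]$ one picks (i.e.\ invariance under the various shuffles permitted by (T3)), and confirming the elementwise commutation of $\pi_u(G_u)$ and $\pi_v(G_v)$ for adjacent $u,v$. Both reduce to a careful, but elementary, case analysis of how the distinguished front syllable interacts with shuffling and with a second commuting vertex group.
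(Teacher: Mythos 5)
Your proposal is correct in outline, but note that the paper itself contains no proof of this statement: it is imported verbatim from E.~Green's thesis (cited as \cite[Thm. 3.9]{Green}), where the argument is a direct combinatorial one about words and the transformations (T1)--(T3). Your route is instead the van der Waerden trick adapted to graph products (a faithful-looking action of each $G_v$ on the set $\mathcal N$ of shuffle-classes of reduced words, glued by the universal property of $\GG$), which is a genuinely different and standard alternative: it converts the delicate inductive cancellation analysis into the verification that the maps $\pi_v$ are well defined, multiplicative, and elementwise commuting for adjacent vertices, after which uniqueness up to shuffling falls out conceptually from $\phi(g)[\varnothing]=[W]$. The skeleton is sound: your key observation that a reduced word has at most one front-shuffleable $G_v$-syllable is correct (the relative order of two syllables in the same vertex group is a shuffle invariant since $\ga$ has no loops), and reducedness is itself invariant under (T3), which legitimizes choosing representatives. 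Two of the deferred checks deserve explicit care: (i) well-definedness requires that if two reduced words with the same tracked front syllable are shuffle-equivalent, then their tails are shuffle-equivalent --- this needs either a trace-monoid style characterization of shuffle equivalence or an induction on the length of the shuffle sequence, as intermediate words may move the front syllable; and (ii) in verifying $\pi_v(h_1h_2)=\pi_v(h_1)\pi_v(h_2)$, the case $h_2g_i=1$ works only because the tail then has no front-shuffleable $G_v$-syllable (otherwise that syllable would have been joinable with $g_i$ in the original reduced word), a point your outline uses implicitly. Granting these routine verifications, your argument yields the full statement, including the two corollaries (a reduced word for $1$ is shuffle-equivalent to the empty word, hence empty; and minimality of reduced length follows since (T1)--(T3) never increase length and all reduced words for $g$ have equal length).
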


Let $g\in G$ and $W\equiv(g_1,\dots,g_n)$ be a reduced word representing $g.$ We define the \emph{length}  of $g$ in $G$ to be $|g|_\Gamma=n$ and the $\emph{support}$ of $g$  in $G$ to be
$\supp_\Gamma(g)=\{v\in V\Gamma \mid \exists~i\in \{1,\dots,n\} \mbox{ such that }  g_i\in G_v-\{1\}\}.$ By the Normal Form Theorem, the length and support of an
element are well defined. For a subset $X \subseteq \Gamma \mathfrak{G}$,
the \textit{support of} $X$, $\supp_\Gamma (X)$ will be defined by $\supp_\Gamma (X)=\bigcup_{x \in X} \supp_\Gamma(x) \subseteq V\Gamma$. Observe that for a subgroup $H =\gen{X}\leqslant \Gamma \mathfrak{G}$
one has $\supp_\Gamma (H)=\supp_\Gamma (X)$. In particular,  if $H$ is finitely generated then $\supp_\Gamma (H)$ is a finite subset of $V\Gamma$.

For every $g \in G$, Theorem~\ref{thm:norm_form} also allows to define the set $\vfs_\ga(g)\subseteq V\ga$, consisting of all $v \in \supp_\ga(g)$
such that some reduced word $W$, representing $g$, starts with a syllable from $G_v$.
Similarly, $\vls_\ga(g) \subseteq V\ga$ will denote the subset of all $u \in V$ such that some reduced word for $g$ ends with a syllable from $G_u$.
Evidently $\vfs_\ga(g^{-1})=\vls_\ga(g)$.


\section{Parabolic subgroups of graph products}\label{S:parabolic}
Throughout this section $\ga$ will denote a simplicial graph, $V\coloneqq V\ga$, $\G=\{G_v \mid v \in V\}$ will be a family of groups and $G=\GG$ will be the corresponding graph product.
For any subset $A\subseteq V\Gamma,$ the subgroup $G_A\leqslant G$, generated by $\{G_v \mid v\in A\}$, is called a \textit{full subgroup}; according to a standard convention,
$G_\emptyset=\{1\}$.  By the Normal Form Theorem, $G_A$ is the graph product
of the groups $\{G_v \mid v \in A\}$ with respect to the graph $\Gamma_A$. It is also easy to see that there is a \textit{canonical retraction} $\rho_A\colon G \to G_A$ defined (on the generators of $G$) by
$\rho_A(g)\coloneqq g$ for each $g \in G_v$ with $v \in A$, and $\rho_A(h)\coloneqq 1$ for each $h \in G_u$ with $u \in V\Gamma - A$.

\begin{rem}\label{rem:fg_sbgp-fin_supp} If $H\leqslant G$ is a finitely generated subgroup then $H$ is contained in the
full subgroup $G_A$, of $G$, where $A:=\supp_\ga(H)$ is a finite subset of $V\ga$,
and thus $G_A$ is a graph products of the family $\{G_v \mid v \in A\}$ over a finite graph $\ga_A$.
\end{rem}

For any $v \in V$ the group $G=\Gamma\mathfrak{G}$ naturally splits as a free amalgamated product: $G=G_A*_{G_C}G_B$, where $C=\link_\ga(v)$, $B=\{v\}\cup\link_\ga(v)$ and $A=V-\{v\}$
(cf. \cite[Lemma 3.20]{Green}).

The goal of this section is to develop the theory of \textit{parabolic subgroups} (i.e., conjugates of full subgroups) of graph products.
Some of the statements in this section are similar to those already known about parabolic subgroups of Coxeter groups (see \cite[Sections 2,3]{Krammer}) and about
parabolic subgroups of right angled Artin groups (see \cite[Section 2]{D-K-R}).


\begin{lemma} \label{lem:sm_aux} Suppose that $T \subseteq V:=V\Gamma$ and $g,x,y$ are some elements of $G=\GG$ such that $gxg^{-1}=y$, $\supp(y) \subseteq T$ and $\vls_\ga(g) \cap \supp_\ga(x)=\emptyset$.
Then $g$ can be represented by a reduced word $(h_1,\dots,h_r,h_{r+1},\dots,h_n)$, where $h_1,\dots,h_r \in G_T$ and $h_{r+1},\dots,h_n \in G_S$, with $S\coloneqq \link_\ga(\supp_\ga(x)) \subseteq V$.
\end{lemma}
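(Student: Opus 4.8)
The plan is to isolate from $g$ the longest possible right-hand factor lying in $G_S$, and then to prove that what is left over must lie in $G_T$. Among all factorizations $g=g_1g_2$ with $g_2\in G_S$ and $|g|_\ga=|g_1|_\ga+|g_2|_\ga$ (that is, such that juxtaposing reduced words for $g_1$ and $g_2$ gives a reduced word for $g$), I would fix one in which $|g_2|_\ga$ is maximal; such a factorization exists because $g_2=1$ is always admissible and the lengths are bounded by $|g|_\ga$. The role of maximality is to guarantee that $\vls_\ga(g_1)\cap S=\emptyset$: if some reduced word for $g_1$ ended in a syllable $a\in G_w$ with $w\in S$, then $a\in G_S$ and, since $(a,\text{word of }g_2)$ is a contiguous subword of a reduced word for $g$, the element $ag_2\in G_S$ would be a right factor of $g$ of length $|g_2|_\ga+1$, contradicting the choice of $g_2$.

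Having fixed this factorization I would record two consequences. First, every vertex of $\supp_\ga(g_2)$ lies in $S=\link_\ga(\supp_\ga(x))$ and is therefore adjacent to every vertex of $\supp_\ga(x)$, so $g_2$ commutes with $x$ and hence $y=gxg^{-1}=g_1xg_1^{-1}$. Second, $\vls_\ga(g_1)\cap\supp_\ga(x)=\emptyset$: any $w$ in this intersection would be adjacent to all of $\supp_\ga(g_2)\subseteq S$, so a terminal $G_w$-syllable of $g_1$ would commute past $g_2$, giving $w\in\vls_\ga(g)\cap\supp_\ga(x)$, contrary to the hypothesis. Thus $g_1$ enjoys the strong property $\vls_\ga(g_1)\cap\bigl(S\cup\supp_\ga(x)\bigr)=\emptyset$.

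The heart of the argument is then to show that, writing $(f_1,\dots,f_p)$ for a reduced word of $g_1$ and $(x_1,\dots,x_m)$ for a reduced word of $x$, the word
\[ W \equiv (f_1,\dots,f_p,\ x_1,\dots,x_m,\ f_p^{-1},\dots,f_1^{-1}) \]
is already reduced. By the Normal Form Theorem (Theorem~\ref{thm:norm_form}) it suffices to check that no two syllables of $W$ can be joined together. Reducedness of $g_1$, of $x$, and of $g_1^{-1}$ rules out all joins internal to the three blocks, so any join is between two syllables sharing a common vertex group $G_w$, one lying weakly to the left of the other. The mechanism is uniform: shuffling the left syllable rightward past the intervening $g_1$- or $g_1^{-1}$-syllables can only succeed if the terminal part of $g_1$ commutes with $w$, forcing $w\in\vls_\ga(g_1)$, while shuffling it past the central $x$-block forces $w\in S$ (when $w\notin\supp_\ga(x)$) or exhibits $w\in\supp_\ga(x)$. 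In every configuration one lands in $\vls_\ga(g_1)\cap(S\cup\supp_\ga(x))=\emptyset$, a contradiction; and two equal-vertex syllables $f_i,f_l$ inside $g_1$ stay blocked in $W$ by the very non-commuting syllable that separates them in the reduced word of $g_1$. Hence $W$ is reduced, and therefore $\supp_\ga(y)=\supp_\ga(g_1)\cup\supp_\ga(x)$.

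Finally, since $\supp_\ga(y)\subseteq T$, this equality yields $\supp_\ga(g_1)\subseteq T$, i.e.\ $g_1\in G_T$. Combining this with $g_2\in G_S$ and $|g|_\ga=|g_1|_\ga+|g_2|_\ga$, the concatenation of reduced words for $g_1$ and $g_2$ is a reduced word $(h_1,\dots,h_r,h_{r+1},\dots,h_n)$ for $g$ with $h_1,\dots,h_r\in G_T$ and $h_{r+1},\dots,h_n\in G_S$, where $r=|g_1|_\ga$, as required. The main obstacle is the reducedness of $W$ in the third step; the conceptual subtlety behind it is that $G_S$ is itself a (generally non-abelian) graph product, so one cannot merely shuffle every $S$-labelled syllable of $g$ to the right. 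It is precisely the maximality of $g_2$, expressed through $\vls_\ga(g_1)\cap S=\emptyset$, that prevents any cancellation across the central block $x$ during the conjugation and thereby makes the support computation work.
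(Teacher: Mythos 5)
Your proof is correct, but it is organized quite differently from the paper's. The paper proves Lemma~\ref{lem:sm_aux} by induction on $|g|_\ga$: since the word $(g_1,\dots,g_n,x_1,\dots,x_k,g_n^{-1},\dots,g_1^{-1})$ cannot be reduced when $\supp_\ga(g)\not\subseteq T$, some syllable $g_m$ must join with its own inverse $g_m^{-1}$ (the hypothesis $\vls_\ga(g)\cap\supp_\ga(x)=\emptyset$ rules out joins with $x$-syllables, and reducedness of $g$ forces $m=m'$), whence $g_m\in G_{\vls_\ga(g)}\cap G_S$ can be shuffled to the end; one then checks that the shortened element $g'$ still satisfies $\vls_\ga(g')\cap\supp_\ga(x)=\emptyset$ and applies the induction hypothesis. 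You replace this syllable-by-syllable peeling with a one-shot extremal argument: extract the maximal length-additive $G_S$-suffix $g_2$, deduce $\vls_\ga(g_1)\cap\bigl(S\cup\supp_\ga(x)\bigr)=\emptyset$, and then prove directly that the conjugation word for $g_1xg_1^{-1}$ is reduced, so that $\supp_\ga(g_1)\subseteq\supp_\ga(y)\subseteq T$. Your case analysis for reducedness is compressed but complete: the cross-block joins ($f_i$ with $x_j$, $x_j$ with $f_l^{-1}$, and $f_i$ with $f_i^{-1}$) each land in the empty set $\vls_\ga(g_1)\cap(S\cup\supp_\ga(x))$, and a join $f_i$ with $f_l^{-1}$ for $i\neq l$ is blocked by the intervening same-vertex syllable, exactly as you say. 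Two remarks on the trade-offs. First, your maximality trick is essentially the normalization the paper performs not in the lemma but in the proof of Proposition~\ref{prop:inter_parab} (``if $g'=g''h'$ with $h'\in G_S$\dots\ $g'$ can be shortened''), so you have front-loaded that reduction into the lemma itself. Second, your route buys slightly more: you obtain a genuinely reduced expression for $y=g_1xg_1^{-1}$ and hence the exact formula $\supp_\ga(y)=\supp_\ga(g_1)\cup\supp_\ga(x)$, information the paper's induction never makes explicit; the price is that the no-cancellation verification for the long word $W$ carries several cases that each need the care you gave them, whereas the paper's inductive step requires only a single join-pair analysis plus the hypothesis-preservation check. (For completeness you might note the degenerate case $x=1$, where $S=\link_\ga(\emptyset)=V$ and the statement is trivial, but this is cosmetic.)
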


\begin{proof} Choose some reduced words $(g_1,\dots,g_n)$, $(x_1,\dots,x_k)$ and $(y_1,\dots,y_l)$ representing the elements $g$, $x$, and $y$ in $G$ respectively.
The statement will be proved by induction on $n=|g|_\ga$. If $n=0$ then the claim clearly holds, so assume that $n \ge 1$ and the claim has already been established for all $g \in G$, satisfying the assumptions of the lemma,
with $|g|_\ga<n$.

If $\supp_\ga(g) \subseteq \supp_\ga(y) \subseteq T$, there is nothing to prove. Otherwise, the equality $gxg^{-1}=y$ shows that $W\equiv (g_1,\dots,g_n,x_1,\dots,x_k,g_n^{-1},\dots,g_1^{-1})$,
representing the left-hand side, cannot be a reduced word. Since the word $(g_1,\dots,g_n)$ is reduced and
$\vls_\ga(g) \cap \supp_\ga(x)=\vfs_\ga(g^{-1}) \cap \supp_\ga(x)=\emptyset$, there must exist $m,m' \in \{1,\dots,n\}$ such that  $g_m$ can be joined with
$g_{m'}^{-1}$ in $W$. It follows that $m=m'$ because otherwise the word $(g_1,\dots,g_n)$ would not be reduced. It also follows that $g$ is represented by the word
$(g_1,\dots,g_{m-1},g_{m+1},\dots,g_n,g_m)$ and $g_m \in G_{\vls_\ga(g)} \cap G_S$, where $S=\link_\ga(\supp_\ga(x))$. Hence we have $g' x {g'}^{-1}=y$, where $g'\in G $ is the element represented by the word
$U \equiv (g_1,\dots,g_{m-1},g_{m+1},\dots,g_n)$. Note that the word $U$ is reduced as it was obtained from a reduced word for $g$ by removing its last syllable; in particular, $|g'|_\ga=n-1$.

Suppose that there is some  $v \in \vls_\ga (g') \cap \supp_\ga(x)$.  Then $g'=g''g_p$ for some $p \in \{1,\dots,n\}$, $p \neq m$, with $g_p \in G_v$ and $|g''|_\ga=n-2$. Observe that $g_m$
must commute with $g_p$, as $g_m \in G_{\link_\ga(v)}$, therefore $g=g'g_m=g''g_pg_m=g''g_mg_p$, implying that $v \in \vls_\ga(g)$, which contradicts to our assumptions. Thus $\vls_\ga (g') \cap \supp_\ga(x)=\emptyset$
and we can apply the induction hypothesis to conclude that $g'$ is represented by a reduced word $(h_1,\dots, h_r,h_{r+1}, \dots,h_{n-1})$, where $r \in \{1,\dots,n-1\}$,
$h_1,\dots,h_r \in G_T$ and $h_{r+1},\dots,h_{n-1} \in G_S$. After setting $h_n\coloneqq g_m$, we can conclude that $g=g'g_m$ is represented by the word $(h_1,\dots,h_{n-1},h_n)$
(which must be reduced as its length is $n=|g|_\ga$), satisfying the required properties.
\end{proof}

\begin{lemma} \label{lem:inter_spec} Let $\mathcal S$ be an arbitrary collection of subsets of $V=V\ga$.
Then $\bigcap_{S\in\mathcal{S}}G_S=G_T$, where $T\coloneqq \bigcap_{S\in\mathcal{S}} S \subseteq V$.
\end{lemma}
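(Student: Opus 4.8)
The plan is to reduce the statement to a clean characterization of membership in a full subgroup in terms of supports: for any $g \in G$ and any $S \subseteq V$ one has $g \in G_S$ if and only if $\supp_\ga(g) \subseteq S$. Granting this criterion, the lemma is immediate. Indeed, an element $g$ lies in $\bigcap_{S\in\mathcal{S}}G_S$ precisely when $\supp_\ga(g)\subseteq S$ for every $S \in \mathcal{S}$, that is, when $\supp_\ga(g)\subseteq \bigcap_{S\in\mathcal{S}}S = T$, which by the criterion is equivalent to $g \in G_T$. So the content of the proof is entirely concentrated in the support criterion, while the displayed equality falls out by rewriting an intersection of subgroups as an intersection of supports.

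Before that, I would dispose of the trivial inclusion $G_T \leqslant \bigcap_{S\in\mathcal{S}}G_S$, which needs no work: since $T \subseteq S$ for each $S \in \mathcal{S}$, every generator of $G_T$ already lies in $G_S$, so $G_T \leqslant G_S$ for all $S$. For the support criterion itself, one direction is definitional: if $\supp_\ga(g)\subseteq S$, then a reduced word representing $g$ involves only syllables from vertex groups $G_v$ with $v \in S$, so $g \in G_S$. For the converse, suppose $g \in G_S$. By the remarks preceding this section, $G_S$ is the graph product of $\{G_v \mid v \in S\}$ over the full subgraph $\ga_S$, so by the Normal Form Theorem $g$ is represented by a reduced word $W$ all of whose syllables come from vertex groups indexed by $S$. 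The point is then that $W$ is \emph{also} reduced as a word in $G$, so by the well-definedness of support (Theorem \ref{thm:norm_form}) we get $\supp_\ga(g) = \supp_{\ga_S}(g) \subseteq S$.

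The one step I expect to require genuine care is verifying that a word reduced in $G_S$ stays reduced in $G$, and this is exactly where fullness of $\ga_S$ is used. Two syllables $g_i \in G_v$, $g_j \in G_u$ appearing in $W$, with $v,u \in S$, can be joined together in $G$ only if all intermediate syllables lie in vertex groups $G_w$ with $w \in \link_\ga(v)$; but the intermediate vertices already lie in $S$, and $\link_\ga(v)\cap S = \link_{\ga_S}(v)$ precisely because $\ga_S$ is a full subgraph. Hence the joining condition in $G$ for syllables of $W$ coincides with the joining condition in $G_S$, so no two syllables of $W$ can be joined in $G$ either, and $W$ is reduced in $G$. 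Once this compatibility of the reduction process under passage to the full subgraph is pinned down, the characterization and therefore the lemma follow.
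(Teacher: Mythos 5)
Your proposal is correct and follows essentially the same route as the paper: the paper's proof also disposes of the easy inclusion $G_T \leqslant \bigcap_{S\in\mathcal{S}}G_S$ and then deduces $\supp_\ga(x)\subseteq S$ for each $S\in\mathcal{S}$ directly from the Normal Form Theorem (Theorem \ref{thm:norm_form}). The only difference is one of granularity: you carefully verify the underlying compatibility (that a word reduced over the full subgraph $\ga_S$ remains reduced in $G$), which the paper treats as already contained in its earlier observation that $G_S$ is itself the graph product over $\ga_S$.
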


\begin{proof} Evidently, $G_T \leqslant \bigcap_{S\in\mathcal{S}}G_S$. To show the reverse inclusion, consider any $x \in \bigcap_{S\in\mathcal{S}}G_S$. By Theorem~\ref{thm:norm_form},
$\supp_\ga(x) \subseteq S$ for every $S\in \mathcal{S}$, hence $\supp_\ga(x) \subseteq\bigcap_{S\in\mathcal{S}} S=T$, thus $x \in G_T$.
\end{proof}

\begin{prop}\label{prop:inter_parab} 
Consider arbitrary $S,T \subseteq V$ and $g \in G$. Then there exist $P \subseteq S \cap T$ and $h \in G_T$ such that $gG_Sg^{-1} \cap G_T = hG_Ph^{-1}$.
\end{prop}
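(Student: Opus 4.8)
The plan is to reduce to a canonical representative of $g$ and then extract a common conjugator using Lemma \ref{lem:sm_aux}. First I would normalize $g$. Observe that replacing $g$ by $gs$ with $s \in G_S$ leaves $gG_Sg^{-1}$ unchanged, while replacing $g$ by $tg$ with $t \in G_T$ replaces $gG_Sg^{-1}\cap G_T$ by its conjugate $t(gG_Sg^{-1}\cap G_T)t^{-1}$ (since $t^{-1}G_Tt = G_T$). Hence, at the cost of conjugating the final answer by an element of $G_T$ — which is harmless, as it will simply be folded into $h$ — I may assume that $g$ has minimal length among all elements of the double coset $G_TgG_S$. For such a $g$ one checks in the standard way that $\vfs_\ga(g)\cap T = \emptyset$ and $\vls_\ga(g)\cap S = \emptyset$: if, say, some reduced word for $g$ ended in a syllable from $G_v$ with $v \in S$, that syllable could be absorbed into $G_S$ to produce a strictly shorter representative of the same double coset, and symmetrically for the first syllable and $T$.

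Next I would analyze an arbitrary element $y \in gG_Sg^{-1}\cap G_T$, writing $y = gxg^{-1}$ with $x \in G_S$ and $\supp_\ga(y)\subseteq T$. Since $\supp_\ga(x)\subseteq S$ and $\vls_\ga(g)\cap S = \emptyset$, the hypothesis $\vls_\ga(g)\cap\supp_\ga(x)=\emptyset$ of Lemma \ref{lem:sm_aux} holds, so I may write $g$ as a reduced word whose first $r$ syllables lie in $G_T$ and whose remaining syllables lie in $G_L$, where $L := \link_\ga(\supp_\ga(x))$. Letting $a$ be the product of the first $r$ syllables and $b$ the product of the rest gives $g = ab$ with $a \in G_T$ and $b \in G_L$; as $G_L$ commutes elementwise with $x$, it follows that $y = gxg^{-1} = a(bxb^{-1})a^{-1} = axa^{-1}$. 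Now the condition $\vfs_\ga(g)\cap T = \emptyset$ forces $r = 0$, since otherwise the chosen reduced word for $g$ would begin with a nontrivial syllable from $G_v$ for some $v \in T$, placing $v$ in $\vfs_\ga(g)\cap T$. Thus $a = 1$, i.e.\ $g = b \in G_L$ commutes with $x$, whence $y = x$; in particular $\supp_\ga(y) = \supp_\ga(x) \subseteq S\cap T$ and $\supp_\ga(x)\subseteq \link_\ga(\supp_\ga(g))$.

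This shows that every $y$ in the intersection lies in $G_P$, where $P := (S\cap T)\cap\link_\ga(\supp_\ga(g)) \subseteq S\cap T$ depends only on $g$, $S$ and $T$, and not on $x$. Conversely, any $x \in G_P$ satisfies $\supp_\ga(x)\subseteq\link_\ga(\supp_\ga(g))$, hence commutes with $g$, so $gxg^{-1} = x \in G_P \subseteq G_T$ while also $x \in G_P \subseteq G_S$; thus $x \in gG_Sg^{-1}\cap G_T$. Therefore $gG_Sg^{-1}\cap G_T = G_P$ for the minimal representative $g$, and undoing the normalization of the first paragraph (conjugating back by the discarded element $t \in G_T$, using $g = tg_0s$ with $g_0$ minimal) yields the desired equality $gG_Sg^{-1}\cap G_T = hG_Ph^{-1}$ with $h \in G_T$ in the general case.

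The main obstacle I anticipate is uniformity: a priori each element $y$ of the intersection could require a different conjugating element, so the crux of the argument is that, after passing to a minimal double coset representative, Lemma \ref{lem:sm_aux} forces every such $y$ to be fixed under conjugation by $g$ and to be supported on the single set $P$. The other point requiring care is the reverse inclusion $G_P \subseteq gG_Sg^{-1}\cap G_T$ (rather than only $gG_Sg^{-1}\cap G_T \subseteq G_P$), but this follows directly from the commutation relation $\supp_\ga(x)\subseteq\link_\ga(\supp_\ga(g))$.
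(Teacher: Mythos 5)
Your proof is correct and takes essentially the same route as the paper's: you pass to a minimal representative of the double coset $G_TgG_S$ (the paper equivalently strips a maximal $G_T$-prefix $h$ and then a $G_S$-suffix, giving $\vfs_\ga(g')\cap T=\emptyset$ and $\vls_\ga(g')\cap S=\emptyset$), apply Lemma \ref{lem:sm_aux} to force the $G_T$-part of the reduced word to be empty so that the normalized element commutes with each $x$, and identify the intersection as $G_P$ with $P=S\cap T\cap\link_\ga(\supp_\ga(g_0))$, conjugated back by the discarded $G_T$-factor. The only cosmetic difference is that you check $y=x\in G_P$ directly instead of citing Lemma \ref{lem:inter_spec} for $G_Q\cap G_T=G_P$.
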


\begin{proof} Obviously one can write $g=hg'$, where $|g|_\ga=|h|_\ga+|g'|_\ga$, $\supp_\ga(h) \subseteq T$ and $\vfs(g') \cap T =\emptyset$. Then
$h \in G_T$ and $gG_Sg^{-1} \cap G_T=h\left(g'G_S{g'}^{-1} \cap G_T \right)h^{-1}$.
If $g'=g''h'$ for some $h' \in G_S$ with $|g'|_\ga=|g''|_\ga+|h'|_\ga$, then evidently $g'G_S{g'}^{-1}=g'' G_S{g''}^{-1}$, that is $g'$ can be shortened. Thus without loss of
generality we can assume that $\vls(g') \cap S=\emptyset$.

For every element $y \in g'G_S{g'}^{-1} \cap G_T$, take some $x \in G_S$ such that
$g'x{g'}^{-1}=y$. Applying Lemma~\ref{lem:sm_aux} we see that $\supp_\ga(g') \subseteq \link_\ga(\supp_\ga(x))$. The latter implies that $\supp_\ga(x) \subseteq \link_\ga(\supp_\ga(g'))$,
i.e., every syllable of $x$ commutes with every syllable of $g'$. Setting $Q\coloneqq S \cap \link_\ga(\supp_\ga(g')) \subseteq V$ we see that $$g'G_S{g'}^{-1} \cap G_T = g'G_Q{g'}^{-1} \cap G_T=G_Q \cap G_T.$$

By Lemma~\ref{lem:inter_spec}, $G_Q \cap G_T=G_P$, where $P\coloneqq Q \cap T$, and so $$gG_Sg^{-1} \cap G_T =h \left(g'G_S{g'}^{-1} \cap G_T\right) h^{-1}= hG_Ph^{-1},$$
as claimed.
\end{proof}

\begin{df} Any subgroup  $K \leqslant G=\GG$, conjugate to a full subgroup, is called \textit{parabolic}.
Moreover, if there are $S \subsetneqq V$ and $f \in G$ such that  $K=fG_S f^{-1}$, then $K$ is said to be a \textit{proper parabolic subgroup}.
In the latter case $K \neq G$ unless $G_v=\{1\}$ for all $v \in V-S$.
Clearly any parabolic subgroup $K=fG_S f^{-1}$, where $S \subseteq V$ and $f \in G$, is a retract of $G$, with the retraction $\rho_K\colon G \to K$, defined by
$\rho_K(g)\coloneqq  f\rho_S(f^{-1}gf)f^{-1}$ for all $g \in G$.
\end{df}

Proposition~\ref{prop:inter_parab} immediately yields
\begin{cor}\label{cor:inter_parab} The intersection of two parabolic subgroups of a graph product is again a parabolic subgroup.
\end{cor}

\begin{lemma} \label{lem:conj_of_parab_in_another_parab} If $S,T \subseteq V$ and $g_1,g_2 \in G$ satisfy $g_2G_Tg_2^{-1} \leqslant g_1G_Sg_1^{-1}$ then $G_T=G_{T\cap S}$. If, in addition,
$g_1=1$ then there is $g_3 \in  G_S$ such that $g_2G_Tg_2^{-1}= g_3G_{T\cap S}g_3^{-1}$.
\end{lemma}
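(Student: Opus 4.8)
The plan is to derive both assertions from Proposition~\ref{prop:inter_parab}, reducing each of them to a single rigidity statement about full subgroups, which I would prove first and separately.

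That auxiliary statement is: \emph{for every $B \subseteq V$ and every $f \in G$, the inclusion $fG_Bf^{-1} \leqslant G_B$ already forces $fG_Bf^{-1} = G_B$.} To prove it, I would first write $f = ca$ with $a \in G_B$, $|f|_\ga = |c|_\ga + |a|_\ga$ and $\vls_\ga(c) \cap B = \emptyset$; this is the mirror image of the decomposition used at the start of the proof of Proposition~\ref{prop:inter_parab}, obtained by applying that decomposition to $f^{-1}$ and inverting. Since $a \in G_B$ gives $aG_Ba^{-1} = G_B$, we have $fG_Bf^{-1} = cG_Bc^{-1} \leqslant G_B$, so it suffices to show that $c$ centralises $G_B$. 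Let $B' = \supp_\ga(G_B)$ be the set of vertices of $B$ carrying a non-trivial group, so $G_B = G_{B'}$; using Theorem~\ref{thm:norm_form} I can choose $x \in G_B$ with $\supp_\ga(x) = B'$ (take one non-trivial syllable from each $G_w$, $w \in B'$ — no two of these can be joined, so the word is reduced). Setting $y = cxc^{-1} \in G_B$, the hypothesis $\vls_\ga(c) \cap \supp_\ga(x) = \emptyset$ lets me apply Lemma~\ref{lem:sm_aux}, which represents $c$ by a reduced word whose initial syllables lie in $G_B$ and whose remaining syllables lie in $G_{\link_\ga(B')}$. The decisive point is then to rule out any initial syllable in $G_B$: if such a syllable $h_r \in G_w$ occurred (with $w \in B'$), then every later syllable would lie in $G_{\link_\ga(B')} \leqslant G_{\link_\ga(w)}$ and hence commute with $h_r$, so syllable shuffling would carry $h_r$ to the end and put $w \in \vls_\ga(c) \cap B$, contradicting the choice of $c$. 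Hence $c \in G_{\link_\ga(B')}$, every syllable of $c$ commutes with $G_{B'} = G_B$, and $cG_Bc^{-1} = G_B$. I expect this step to be the main obstacle of the whole argument.

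Granting the auxiliary statement, the first assertion is short. Conjugating $g_2G_Tg_2^{-1} \leqslant g_1G_Sg_1^{-1}$ by $g_2^{-1}$ gives $G_T \leqslant fG_Sf^{-1}$, where $f = g_2^{-1}g_1$, so $fG_Sf^{-1} \cap G_T = G_T$. Proposition~\ref{prop:inter_parab} then supplies $P \subseteq S \cap T$ and $h \in G_T$ with $G_T = hG_Ph^{-1}$; since $h \in G_T$ this yields $G_P = h^{-1}G_Th = G_T$, and now $P \subseteq S \cap T \subseteq T$ forces $G_T = G_P \leqslant G_{S\cap T} \leqslant G_T$, i.e. $G_T = G_{S\cap T}$.

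For the second assertion, assume $g_1 = 1$, so $g_2G_Tg_2^{-1} \leqslant G_S$. I would apply Proposition~\ref{prop:inter_parab} with the roles of $S$ and $T$ interchanged (element $g_2$, inner set $T$, outer set $S$), obtaining $P \subseteq S \cap T$ and $h \in G_S$ with $g_2G_Tg_2^{-1} = g_2G_Tg_2^{-1} \cap G_S = hG_Ph^{-1}$. Writing $w = h^{-1}g_2$, the subgroup $G_P = wG_Tw^{-1}$ is a conjugate of $G_T$ contained in $G_T$ (because $P \subseteq S\cap T \subseteq T$), so the auxiliary statement gives $G_P = G_T$, and combined with the first assertion this yields $G_P = G_{S\cap T}$. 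Therefore $g_2G_Tg_2^{-1} = hG_Ph^{-1} = hG_{S\cap T}h^{-1}$, and $g_3 = h \in G_S$ is the desired element.
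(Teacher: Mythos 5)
Your proof is correct, but it takes a genuinely different route from the paper's. The paper disposes of both assertions with the canonical retraction: setting $K = g_1G_Sg_1^{-1}$ and $\rho_K(g)=g_1\rho_S(g_1^{-1}gg_1)g_1^{-1}$, it notes that $\ker\rho_K=\ker\rho_S$ is normal and contains $G_{V-S}$, so $g_2G_{T-S}g_2^{-1}$ lies in $K\cap\ker\rho_K$ and is therefore both fixed elementwise and killed by $\rho_K$, forcing $G_{T-S}=\{1\}$ and $G_T=\gen{G_{T\cap S},G_{T-S}}=G_{T\cap S}$; the second assertion then falls out by applying $\rho_S$ to the inclusion $g_2G_Tg_2^{-1}\leqslant G_S$, which produces $g_3=\rho_S(g_2)$ in one line. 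You instead route everything through Proposition~\ref{prop:inter_parab} together with your auxiliary rigidity statement ($fG_Bf^{-1}\leqslant G_B$ forces equality), which you prove from scratch by normal forms; that proof is sound — the suffix decomposition $f=ca$, the choice of $x$ with $\supp_\ga(x)=B'$, the application of Lemma~\ref{lem:sm_aux}, and the shuffling of a putative last $G_B$-prefix syllable to the end to contradict $\vls_\ga(c)\cap B=\emptyset$ all check out, and there is no circularity since Proposition~\ref{prop:inter_parab} precedes the lemma. You correctly identified the auxiliary statement as the crux: it is precisely the full-subgroup case of Lemma~\ref{lem:conj_of_parab}, the very next statement in the paper, which is dispatched there in three lines by the same retraction trick (for $h\in K$, the element $h'=g\rho_K(g^{-1})h\rho_K(g)g^{-1}$ lies in $K$ and satisfies $\rho_K(h')=h$), so in effect you re-proved a later lemma by hand. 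The trade-off: the paper's method is far shorter and yields the sharper intermediate fact $G_{T-S}=\{1\}$ (from which Corollary~\ref{cor:non-tr_parab_in_other} is immediate), whereas your method avoids retractions entirely and extracts extra structure along the way — your analysis shows that any $f$ with $fG_Bf^{-1}\leqslant G_B$ factors as $f=ca$ with $a\in G_B$ and $c$ centralizing $G_B$, which anticipates the normalizer computation of Proposition~\ref{prop:normalizer_of_parab}.
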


\begin{proof} Denote $K\coloneqq g_1G_Sg_1^{-1}$ and let $\rho_K\colon G \to K$ be the retraction defined above. By definition, $G_{V-S} \subseteq \ker\rho_S=\ker\rho_K$, and as  $g_2G_{T-S}g_2^{-1} \leqslant K$, we have
$g_2G_{T-S}g_2^{-1}=\rho_K(g_2G_{T-S}g_2^{-1})=\{1\}$. Therefore $G_{T-S}=\{1\}$, and $G_T=\gen{G_{T\cap S},G_{T-S}}=G_{T\cap S}$. It follows that $G_T \leqslant G_S$ and $\rho_S(G_T)=G_T=G_{T\cap S}$.

Now, if $g_1=1$, we have $g_2G_{T}g_2^{-1} \leqslant K=G_S$. Hence $g_2G_{T}g_2^{-1} =\rho_S(g_2G_{T}g_2^{-1})=g_3G_{T\cap S}g_3^{-1}$,
where $g_3\coloneqq \rho_S(g_2) \in G_S$.
\end{proof}

\begin{cor} \label{cor:non-tr_parab_in_other} If $S,T \subseteq V$ and $g_1,g_2 \in G$ are such that $g_2G_Tg_2^{-1} \leqslant g_1G_Sg_1^{-1}$ and $G_t \neq \{1\}$ for all $t\in T$ then
$T \subseteq S$.
\end{cor}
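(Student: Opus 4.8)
The plan is to obtain this as a quick consequence of the preceding Lemma~\ref{lem:conj_of_parab_in_another_parab}. That lemma already tells us that the hypothesis $g_2 G_T g_2^{-1} \leqslant g_1 G_S g_1^{-1}$ forces the equality of full subgroups $G_T = G_{T\cap S}$, with no condition imposed on the vertex groups. So the only work left is to convert the algebraic equality $G_T=G_{T\cap S}$ into the combinatorial containment $T\subseteq S$, and it is precisely here that the assumption $G_t\neq\{1\}$ for all $t\in T$ enters.

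To carry this out, I would fix an arbitrary $t\in T$ and aim to show $t\in S$. Since $G_t\neq\{1\}$, I can choose a non-trivial element $g\in G_t\leqslant G_T$. By the equality $G_T=G_{T\cap S}$ just obtained, $g\in G_{T\cap S}$, and the Normal Form Theorem (Theorem~\ref{thm:norm_form}) guarantees that $\supp_\ga(g)\subseteq T\cap S$. On the other hand, $g$ is a non-trivial element of the vertex group $G_t$, so its reduced word is the single syllable $(g)$ and hence $\supp_\ga(g)=\{t\}$. Combining the two facts gives $t\in T\cap S\subseteq S$. As $t\in T$ was arbitrary, this yields $T\subseteq S$, as required.

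There is essentially no serious obstacle here: the genuine content is packaged inside Lemma~\ref{lem:conj_of_parab_in_another_parab}, and what remains is a routine support computation. The only step deserving a (trivial) justification is the claim $\supp_\ga(g)=\{t\}$ for a non-trivial $g\in G_t$, which is immediate from the well-definedness of the support under the Normal Form Theorem. One could alternatively phrase the containment of supports via Lemma~\ref{lem:inter_spec}, but the direct appeal to the normal form is cleaner and makes transparent exactly where non-triviality of the $G_t$ is needed.
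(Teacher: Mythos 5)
Your proposal is correct and matches the paper's intended argument: the paper states this as an immediate consequence of Lemma~\ref{lem:conj_of_parab_in_another_parab} (whose proof in fact already establishes $G_{T-S}=\{1\}$, which together with $G_t\neq\{1\}$ for all $t\in T$ forces $T-S=\emptyset$). Your variant---deducing $t\in S$ from $G_T=G_{T\cap S}$ via the support computation $\supp_\ga(g)=\{t\}\subseteq T\cap S$ for a non-trivial $g\in G_t$---is just a slightly more explicit rendering of the same one-line deduction, and it is sound.
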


\begin{lemma} \label{lem:conj_of_parab} If $K$ is a parabolic subgroup of a graph product $G$ and $gKg^{-1} \subseteq K$ for some $g \in G$, then $gKg^{-1}=K$.
\end{lemma}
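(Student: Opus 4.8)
The plan is to reduce immediately to the case in which the parabolic subgroup is full, and then to invoke Lemma~\ref{lem:conj_of_parab_in_another_parab}. Write $K = fG_Sf^{-1}$ for some $S \subseteq V$ and $f \in G$. The conclusion $gKg^{-1}=K$ is invariant under replacing the pair $(K,g)$ by a simultaneous conjugate, so I would set $\tilde g \coloneqq f^{-1}gf$ and conjugate the hypothesis $gKg^{-1} \subseteq K$ by $f^{-1}$. Since $f^{-1}Kf = G_S$, this turns the hypothesis into $\tilde g G_S \tilde g^{-1} \subseteq G_S$, and it suffices to prove that this inclusion is an equality; conjugating back by $f$ then recovers $gKg^{-1} = K$ in full generality.

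For the reduced statement I would apply Lemma~\ref{lem:conj_of_parab_in_another_parab} with $T = S$, $g_2 = \tilde g$ and $g_1 = 1$. The inclusion $\tilde g G_S \tilde g^{-1} \leqslant 1\cdot G_S\cdot 1^{-1}$ is exactly its hypothesis, so the ``$g_1=1$'' clause of that lemma furnishes an element $g_3 \in G_S$ with $\tilde g G_S \tilde g^{-1} = g_3 G_{S \cap S} g_3^{-1} = g_3 G_S g_3^{-1}$. But $g_3$ lies in the subgroup $G_S$, so conjugation by $g_3$ carries $G_S$ onto itself, giving $\tilde g G_S \tilde g^{-1} = G_S$. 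This is precisely the equality required in the reduced case, and hence the lemma follows.

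I do not expect a genuine obstacle here: essentially the whole force of the statement has already been absorbed into Lemma~\ref{lem:conj_of_parab_in_another_parab} (which rests in turn on Proposition~\ref{prop:inter_parab} and the support-pushing argument of Lemma~\ref{lem:sm_aux}). The only points demanding a little care are the conjugation bookkeeping in the reduction to a full subgroup, and the elementary observation that an element of $G_S$ normalizes $G_S$. It is worth noting that the result expresses a co-Hopfian--type rigidity of parabolic subgroups: such a subgroup cannot be properly conjugated inside itself.
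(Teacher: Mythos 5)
Your proof is correct. It takes a mildly different route from the paper: you reduce to the full subgroup $G_S$ by conjugating with $f^{-1}$ and then quote the $g_1=1$ clause of Lemma~\ref{lem:conj_of_parab_in_another_parab} with $T=S$, which hands you $\tilde g G_S\tilde g^{-1}=g_3G_Sg_3^{-1}=G_S$ with $g_3=\rho_S(\tilde g)\in G_S$; all the steps (the conjugation bookkeeping, the applicability of the lemma, and the fact that $g_3$ normalizes $G_S$) check out, and there is no circularity since that lemma is proved independently and earlier. The paper instead gives a direct, self-contained argument on $K$ itself: writing $\rho_K$ for the retraction of $G$ onto $K$, it shows for each $h\in K$ that $h'\coloneqq g\rho_K(g^{-1})h\rho_K(g)g^{-1}$ lies in $gKg^{-1}\subseteq K$ and satisfies $\rho_K(h')=h$, whence $h=h'\in gKg^{-1}$. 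The two arguments run on the same engine --- parabolic subgroups are retracts --- since unfolding your citation of Lemma~\ref{lem:conj_of_parab_in_another_parab} amounts to $\tilde g G_S\tilde g^{-1}=\rho_S(\tilde g G_S\tilde g^{-1})=\rho_S(\tilde g)G_S\rho_S(\tilde g)^{-1}$. What your version buys is economy (no new computation, and it exhibits the conjugator inside $G_S$, foreshadowing Proposition~\ref{prop:normalizer_of_parab}); what the paper's version buys is generality: its proof uses nothing about graph products beyond the existence of a retraction, so it proves the statement for an arbitrary retract $K$ of an arbitrary group $G$. One small correction to your closing aside: the proof of Lemma~\ref{lem:conj_of_parab_in_another_parab} does not rest on Proposition~\ref{prop:inter_parab} or on the support-pushing Lemma~\ref{lem:sm_aux}; it, too, uses only the canonical retraction $\rho_S$, so the dependency chain is shorter than you suggest.
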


\begin{proof} Let $\rho_K\colon G \to K$ be a retraction of $G$ onto $K$. For any $h \in K$, take $h'\coloneqq g\rho_K(g^{-1}) h \rho_K(g) g^{-1} \in gKg^{-1} \subseteq K$. Then
$h'=\rho_K(h')=h$, hence $h \in gKg^{-1}$ for all $h \in K$, thus $K=gKg^{-1}$.
\end{proof}

\begin{prop}\label{prop:min_parab} Let $X$ be a subset of the graph product $G=\GG$ such that at least one of the following conditions holds:
\begin{itemize}
	\item[(i)] the graph $\Gamma$ is finite;
	\item[(ii)] the subgroup $\gen{X} \leqslant G$ is finitely generated.
\end{itemize}
Then there exists a unique minimal parabolic subgroup of $G$ containing $X$.
\end{prop}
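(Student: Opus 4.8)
The plan is to exhibit a single parabolic subgroup $P_0 \supseteq X$ that is contained in \emph{every} parabolic subgroup containing $X$; such a $P_0$ is then automatically the unique minimal one (and, in fact, equals the intersection of all parabolic subgroups containing $X$). The two building blocks will be Corollary~\ref{cor:inter_parab}, which guarantees that the intersection of two parabolic subgroups is parabolic, and Proposition~\ref{prop:inter_parab}, which controls the support of such an intersection.

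First I would record that among all pairs $(f,S)$ with $f \in G$, $S \subseteq V$, $|S|<\infty$ and $X \subseteq fG_Sf^{-1}$, there is at least one. Under hypothesis (i) this is clear, taking $S=V$ (which is finite) and $f=1$, since $G=G_V$ is parabolic. Under hypothesis (ii), Remark~\ref{rem:fg_sbgp-fin_supp} shows that $X \subseteq \gen{X} \subseteq G_A$ with $A\coloneqq \supp_\ga(X)$ finite, so $(1,A)$ is such a pair. Consequently the set of cardinalities $|S|$, ranging over all such pairs, is a non-empty set of non-negative integers, and I can choose a pair $(f_0,S_0)$ realizing its minimum. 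Put $P_0 \coloneqq f_0 G_{S_0} f_0^{-1}$.

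The heart of the argument is to show that $P_0 \subseteq Q$ for an arbitrary parabolic $Q=f_1G_{S_1}f_1^{-1}$ containing $X$ (here $S_1$ may well be infinite). By Corollary~\ref{cor:inter_parab} the intersection $P_0 \cap Q$ is parabolic, and it clearly contains $X$. Conjugating by $f_0^{-1}$ turns it into $G_{S_0} \cap gG_{S_1}g^{-1}$ with $g\coloneqq f_0^{-1}f_1$, and applying Proposition~\ref{prop:inter_parab} (with $T=S_0$, $S=S_1$) yields $G_{S_0}\cap gG_{S_1}g^{-1}=hG_Ph^{-1}$ for some $P \subseteq S_0 \cap S_1$ and $h \in G_{S_0}$. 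Conjugating back, $P_0\cap Q=(f_0h)G_P(f_0h)^{-1}$, which is a finite-support representation (as $P \subseteq S_0$) of a parabolic subgroup containing $X$. By the minimality of $|S_0|$ we get $|P|\ge |S_0|$, while $P \subseteq S_0$ forces $|P|\le |S_0|$; hence $P=S_0$. Since $h \in G_{S_0}$ normalises $G_{S_0}$, this gives $P_0\cap Q=f_0G_{S_0}f_0^{-1}=P_0$, that is $P_0 \subseteq Q$, as desired. Uniqueness is then immediate: any other minimal parabolic subgroup containing $X$ both contains $P_0$ and is contained in it.

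The main obstacle to overcome is that the family of parabolic subgroups containing $X$ may be infinite and may include subgroups of infinite support, so one cannot simply intersect them all and invoke Corollary~\ref{cor:inter_parab} (which only handles finite intersections). The finiteness hypotheses (i) and (ii) are used precisely to guarantee that the invariant $|S_0|$ is a well-defined finite minimum; Proposition~\ref{prop:inter_parab} then does the real work, ensuring that intersecting a finite-support parabolic with an arbitrary one again produces a finite-support parabolic whose support sits inside $S_0$, which is exactly what makes the minimality argument close up.
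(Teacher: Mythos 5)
Your proof is correct. Like the paper, you start by minimizing the finite cardinality of the defining support among all representations $fG_Sf^{-1}\supseteq X$ with $S$ finite, with hypotheses (i) and (ii) (the latter via Remark~\ref{rem:fg_sbgp-fin_supp} and the equality $\supp_\ga(X)=\supp_\ga(\gen{X})$) guaranteeing that such representations exist; but you close the argument by a genuinely different route. The paper only proves \emph{minimality}: it takes a parabolic $g_2G_Tg_2^{-1}$ sandwiched between $X$ and the candidate, invokes Lemma~\ref{lem:conj_of_parab_in_another_parab} to get $G_T=G_{T\cap S}$, and then Lemma~\ref{lem:conj_of_parab} to upgrade the inclusion of conjugates to an equality; uniqueness is deduced separately from Corollary~\ref{cor:inter_parab}, and hypothesis (ii) requires an extra layer in which one first passes to the finite full subgroup $G_A$, applies the finite case there, and uses Lemma~\ref{lem:conj_of_parab_in_another_parab} once more to see that a parabolic of $G$ trapped inside $G_A$ is parabolic in $G_A$. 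You instead show directly that your candidate $P_0$ is a \emph{minimum}: intersecting $P_0$ with an arbitrary parabolic $Q\supseteq X$ and applying Proposition~\ref{prop:inter_parab} — whose statement indeed permits the second support $S_1$ to be infinite, and which supplies both $P\subseteq S_0\cap S_1$ and the crucial normalizing datum $h\in G_{S_0}$ — forces $P=S_0$ by your cardinality minimality, whence $P_0\cap Q=P_0$. This buys a uniform treatment of (i) and (ii) with no case split beyond establishing non-emptiness, avoids Lemmas~\ref{lem:conj_of_parab_in_another_parab} and~\ref{lem:conj_of_parab} entirely (the observation that $h\in G_{S_0}$ normalizes $G_{S_0}$ does the work of the latter), and yields the slightly stronger conclusion that $\pc_\ga(X)$ equals the intersection of \emph{all} parabolic subgroups containing $X$. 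One small remark: your appeal to Corollary~\ref{cor:inter_parab} is redundant, since the subsequent application of Proposition~\ref{prop:inter_parab} already exhibits $P_0\cap Q$ as a parabolic subgroup.
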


\begin{proof} The uniqueness is clear from Corollary~\ref{cor:inter_parab}.

To establish the existence, suppose, at first, that $\Gamma$ is finite. Then there is a subset $S \subseteq V$, of minimal cardinality, such that $X \subseteq g_1G_Sg_1^{-1}$ for some $g_1 \in G$.
Let us show that $g_1G_Sg_1^{-1}$ is a minimal parabolic subgroup containing $X$. Suppose that there is some parabolic subgroup $g_2G_Tg_2^{-1}$ of $G$ such that
$X \subseteq g_2G_Tg_2^{-1} \subseteq g_1G_Sg_1^{-1}$. According to Lemma~\ref{lem:conj_of_parab_in_another_parab}, $G_T=G_{T\cap S}$, and the minimality of $|S|$ implies that
$|S|\le |T\cap S|<\infty$. Consequently, $S=T\cap S$ and $G_S=G_{T\cap S}=G_T$. Therefore, applying Lemma~\ref{lem:conj_of_parab}, we can conclude that $g_2G_Tg_2^{-1} = g_2G_Sg_2^{-1}=g_1G_Sg_1^{-1}$, as required.

In the case (ii), when $\gen{X}=\gen{X'}$ for some finite subset $X' \subset G$, it is clear that $X \subseteq \gen{X'} \leqslant G_{A}$, where $A\coloneqq \supp_\ga(X')$ is a finite subset of $V$.
Recall that the full subgroup $G_A$, of $G$, is itself a graph product (of the groups $\{G_v \mid v \in A\}$ with respect to the graph $\ga_A$) and any parabolic subgroup of
$G_A$ is a parabolic subgroup of $G$. Since $|A|<\infty$, by the first part of the proof $X$ is contained in a minimal parabolic subgroup $g_1G_Sg_1^{-1}$ of $G_A$
for some $S \subseteq A$ and $g_1 \in G_A$. If $X \subseteq g_2G_T g_2^{-1} \subseteq g_1G_Sg_1^{-1}$ for some $T \subseteq V$ and $g_2 \in G$, then $g_2G_Tg_2^{-1} \subseteq G_A$ and so, by
Lemma~\ref{lem:conj_of_parab_in_another_parab}, $g_2G_Tg_2^{-1}$
is a parabolic subgroup of $G_A$. Therefore, using minimality of $g_1G_Sg_1^{-1}$, we achieve $g_2G_T g_2^{-1} = g_1G_Sg_1^{-1}$; thus the proposition is proved.
\end{proof}

\begin{df}  Suppose that a subset $X \subseteq G=\GG$ is contained in a minimal parabolic subgroup of $G$.
Then this parabolic subgroup will be called the \textit{parabolic closure} of $X$ and will be denoted by $\pc_\ga(X)$.
\end{df}

Recall that the normalizer $\No_G(X)$, of a subset $X \subseteq G$, is the subgroup of $G$ defined  by $\No_G(X)\coloneqq \{g \in G \mid gXg^{-1}=X\}$.

\begin{lemma} \label{lem:parab_normal} Let $G=\GG$ and $X \subseteq G$. Suppose that the parabolic closure of $X$ in $G$ exists.
Then for any $g \in G$ with $X \subseteq gXg^{-1}$ one has $gLg^{-1}=L$, where $L\coloneqq \pc_\ga(X)$. In particular,
$\No_G(X) \leqslant \No_G(L)$, i.e., any element normalizing $X$ also normalizes $L$.
\end{lemma}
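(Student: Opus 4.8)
The plan is to exploit the fact that the parabolic closure $L=\pc_\ga(X)$ is not merely \emph{a} minimal parabolic subgroup containing $X$, but is in fact the \emph{smallest} such subgroup, in the sense that it is contained in every parabolic subgroup of $G$ that contains $X$. Once this characterization is in hand, the result will drop out by combining it with Lemma~\ref{lem:conj_of_parab}.

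First I would record this ``smallest'' property. Let $P\leqslant G$ be an arbitrary parabolic subgroup with $X\subseteq P$. By Corollary~\ref{cor:inter_parab}, the intersection $L\cap P$ is again parabolic, and it clearly contains $X$; since $L\cap P\leqslant L$ and $L$ is minimal among parabolic subgroups containing $X$, minimality forces $L\cap P=L$, i.e.\ $L\leqslant P$. Thus $L$ is contained in every parabolic subgroup of $G$ that contains $X$.

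Next I would take $g\in G$ with $X\subseteq gXg^{-1}$. Since $X\subseteq L$, conjugating gives $gXg^{-1}\subseteq gLg^{-1}$, and $gLg^{-1}$ is parabolic (being a conjugate of the parabolic subgroup $L$). Combining $X\subseteq gXg^{-1}\subseteq gLg^{-1}$, we see that $gLg^{-1}$ is a parabolic subgroup containing $X$, so by the ``smallest'' property established above, $L\leqslant gLg^{-1}$, equivalently $g^{-1}Lg\leqslant L$. Now Lemma~\ref{lem:conj_of_parab}, applied to the parabolic subgroup $L$ and the conjugating element $g^{-1}$, upgrades this inclusion to the equality $g^{-1}Lg=L$, whence $gLg^{-1}=L$, as required. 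For the ``in particular'' clause, if $g\in\No_G(X)$ then $gXg^{-1}=X$, so in particular $X\subseteq gXg^{-1}$, and the main assertion yields $gLg^{-1}=L$, i.e.\ $g\in\No_G(L)$; hence $\No_G(X)\leqslant\No_G(L)$.

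I do not expect any serious obstacle here, provided one first isolates the reformulation of $\pc_\ga(X)$ as the least parabolic subgroup containing $X$ (rather than just a minimal one). The only point demanding care is the direction of conjugation when invoking Lemma~\ref{lem:conj_of_parab}: the inclusion we derive is $g^{-1}Lg\leqslant L$, so it is $g^{-1}$, and not $g$, that plays the role of the conjugating element in that lemma.
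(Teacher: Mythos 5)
Your proof is correct and takes essentially the same route as the paper's: there too one observes $X \subseteq gXg^{-1} \subseteq gLg^{-1}$, deduces $L \subseteq gLg^{-1}$ from minimality of $L$, and applies Lemma~\ref{lem:conj_of_parab} to upgrade the inclusion to equality. Your explicit isolation of the ``least parabolic'' property via Corollary~\ref{cor:inter_parab} (and your remark about conjugating by $g^{-1}$ rather than $g$) merely spells out steps the paper leaves implicit.
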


\begin{proof} 
Clearly for any $g \in G$, with $X \subseteq gXg^{-1}$, $X$ is contained in $gLg^{-1}$, which is also a parabolic subgroup. Hence
$L \subseteq gLg^{-1}$, and Lemma~\ref{lem:conj_of_parab} implies that $L=gLg^{-1}$, thus $g \in \No_G(L)$.
\end{proof}

\begin{prop}\label{prop:normalizer_of_parab} Let $K$ be a non-trivial parabolic subgroup of the graph product $G=\GG$. Choose $f \in G$ and $S\subseteq V$
so that $K=fG_Sf^{-1}$ and $G_{s} \neq \{1\}$ for all $s \in S$. Then $\No_G(K)=f G_{S \cup \link_\ga(S)}f^{-1}$; in particular the normalizer $\No_G(K)$ is a parabolic subgroup of $G$.
\end{prop}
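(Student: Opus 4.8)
The plan is to first reduce to the case $f=1$. Since $\No_G(fG_Sf^{-1})=f\,\No_G(G_S)\,f^{-1}$, conjugating by $f$ shows it suffices to prove that $\No_G(G_S)=G_{S\cup\link_\ga(S)}$ (and the displayed equality then automatically exhibits $\No_G(K)$ as a parabolic subgroup). For the inclusion $\No_G(G_S)\supseteq G_{S\cup\link_\ga(S)}$, I would argue on generators: every vertex $v\in\link_\ga(S)$ is adjacent to all of $S$, so $G_v$ commutes with each $G_s$, $s\in S$, and hence centralises $G_S=\gen{G_s\mid s\in S}$; since $G_S$ also normalises itself, the subgroup $G_{S\cup\link_\ga(S)}=\gen{G_S,\,G_{\link_\ga(S)}}$ is contained in $\No_G(G_S)$.

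The substance of the proposition is the reverse inclusion, and this is where I expect the \emph{main obstacle} to lie. Given $g\in\No_G(G_S)$, I would first strip off the parts of $g$ lying in $G_S$: write $g=h g' k$ with $\supp_\ga(h),\supp_\ga(k)\subseteq S$ and $|g|_\ga=|h|_\ga+|g'|_\ga+|k|_\ga$, chosen so that $\vfs_\ga(g')\cap S=\emptyset$ and $\vls_\ga(g')\cap S=\emptyset$. This is obtained by peeling off a maximal prefix and then a maximal suffix lying in $G_S$, exactly as in the opening of the proof of Proposition~\ref{prop:inter_parab} (peeling the suffix only shrinks $\vfs_\ga$, so the condition on the prefix is preserved). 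Since $h,k\in G_S\leqslant\No_G(G_S)$, the element $g'=h^{-1}gk^{-1}$ again normalises $G_S$, and it then remains to show $g'\in G_{\link_\ga(S)}$, because this gives $g=hg'k\in G_{S\cup\link_\ga(S)}$.

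The heart of the argument is to prove $\supp_\ga(g')\subseteq\link_\ga(S)$, for which Lemma~\ref{lem:sm_aux} is the key tool. As $K$ is non-trivial and every $G_s\neq\{1\}$, the set $S$ is non-empty; I would fix an arbitrary $s\in S$ and a non-trivial $x\in G_s$, so that $\supp_\ga(x)=\{s\}\subseteq S$. Because $g'$ normalises $G_S$, the element $y\coloneqq g'x{g'}^{-1}$ lies in $G_S$, whence $\supp_\ga(y)\subseteq S$; moreover $\vls_\ga(g')\cap\supp_\ga(x)=\emptyset$ by the construction above. Applying Lemma~\ref{lem:sm_aux} with $T=S$ produces a reduced word $(h_1,\dots,h_r,h_{r+1},\dots,h_n)$ for $g'$ with $h_1,\dots,h_r\in G_S$ and $h_{r+1},\dots,h_n\in G_{\link_\ga(s)}$. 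The crucial point, and the step I would handle most carefully, is that $\vfs_\ga(g')\cap S=\emptyset$ forbids any reduced word for $g'$ from beginning with a non-trivial syllable of $G_S$; this forces $r=0$, so $\supp_\ga(g')\subseteq\link_\ga(s)$.

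Finally, letting $s$ range over all of $S$ yields $\supp_\ga(g')\subseteq\bigcap_{s\in S}\link_\ga(s)=\link_\ga(S)$, that is $g'\in G_{\link_\ga(S)}$. Combined with $h,k\in G_S$ this gives $g\in G_{S\cup\link_\ga(S)}$, completing the non-trivial inclusion and hence the proof.
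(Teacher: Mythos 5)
Your proof is correct and follows essentially the same route as the paper: after reducing to $f=1$ and disposing of the easy inclusion on generators, both arguments apply Lemma~\ref{lem:sm_aux} with $T=S$ to a non-trivial $x\in G_s$ for each $s\in S$, and then intersect over all $s\in S$ (the paper via Lemma~\ref{lem:inter_spec}, you via $\bigcap_{s\in S}\link_\ga(s)=\link_\ga(S)$) to land in $G_{S\cup\link_\ga(S)}$. Your preliminary decomposition $g=hg'k$ with $\vfs_\ga(g')\cap S=\vls_\ga(g')\cap S=\emptyset$ is a minor refinement rather than a different route, though it has the merit of making explicit the hypothesis $\vls_\ga(g')\cap\supp_\ga(x)=\emptyset$ of Lemma~\ref{lem:sm_aux} (and of forcing $r=0$ in its conclusion), a point the paper's proof passes over silently.
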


\begin{proof} After conjugating everything by $f^{-1}$, we can assume that $K=G_S$. Observe that $S \neq \emptyset$ since $K \neq \{1\}$.
Consider any element $g \in \No_G(G_S)$, any $s \in S$ and any $x \in G_{s}\setminus\{1\}$. Then $gxg^{-1}=y$ for some $y \in G_S$ and Lemma~\ref{lem:sm_aux} implies that $g \in G_{S \cup \link_\ga(s)}$.
Since the latter holds for every $s \in S$, we see that $g \in \bigcap_{s \in S} G_{S \cup \link_\ga(s)}=G_T$, where $T=\bigcap_{s \in S} (S \cup \link_\ga(s))=S \cup \link_\ga(S)$ by Lemma~\ref{lem:inter_spec}.
Thus $\No_G(G_S) \leqslant G_{S \cup \link_\ga(S)}$, and evidently $G_{S \cup \link_\ga(S)} \leqslant \No_G(G_S)$, hence  $\No_G(G_S) = G_{S \cup \link_\ga(S)}$.
\end{proof}

For any graph $\ga$ one can define its \textit{complement graph} $\ga'$ to be the graph with the same vertex set $V\ga'=V\ga$ such that the edge set $E\ga'$ is the complement of $E\ga$ in the
set of two-element subsets of $V\ga$; in other words, for any $u,v \in V\ga'=V\ga$, $\{u,v\} \in E\ga'$ if and only if $\{u,v\} \notin E\ga$.

\begin{df}
A graph $\ga$ will be called \textit{irreducible} if $V\ga$ cannot be represented as a union of two disjoint non-empty subsets $A,B \subset V\ga$ such that $B =\link_\ga(A)$.
\end{df}

It is easy to see that $\ga$ is irreducible if and only if $\ga'$ is connected.
In the case when $\ga$ is not irreducible, any graph product $G=\GG$, with respect to $\ga$, naturally splits as a direct product
$G_A\times G_B$, where $V=A\sqcup B$ and $B=\link_\ga(A)$.

\begin{cor} \label{cor:norm_in_irred} Assume that $\ga$ is a finite irreducible graph and $G=\GG$ is the graph product of a family of groups $\G$ with respect to $\ga$.
Suppose that $N \leqslant G$ is a non-trivial subgroup such that $\pc_\ga(N)$ is a proper parabolic subgroup of $G$. Then $\No_G(N)$ is contained in a proper parabolic subgroup of $G$.
\end{cor}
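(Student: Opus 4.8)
The plan is to reduce everything to the two structural results about normalizers of parabolic subgroups that have just been established, and then to extract properness from the irreducibility hypothesis on $\ga$.

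First I would set $L \coloneqq \pc_\ga(N)$, which exists since $\ga$ is finite (Proposition~\ref{prop:min_parab}) and is a proper parabolic subgroup by hypothesis. As $N \subseteq L$ and $N \neq \{1\}$, the subgroup $L$ is non-trivial. Writing $L = fG_Sf^{-1}$ with $S \subsetneqq V$, I would discard from $S$ every vertex $s$ with $G_s = \{1\}$; this does not change $G_S$ (hence not $L$), keeps $S$ a proper subset of $V$, and leaves $S$ non-empty (because $L \neq \{1\}$). So without loss of generality $G_s \neq \{1\}$ for all $s \in S$, with $\emptyset \neq S \subsetneqq V$.

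Next I would invoke Lemma~\ref{lem:parab_normal} with $X = N$: since any $g \in \No_G(N)$ satisfies $N \subseteq gNg^{-1}$, the lemma yields $\No_G(N) \leqslant \No_G(L)$. Then Proposition~\ref{prop:normalizer_of_parab} computes this last normalizer exactly, giving $\No_G(L) = fG_{S \cup \link_\ga(S)}f^{-1}$, a parabolic subgroup of $G$. Thus $\No_G(N)$ is contained in this parabolic subgroup, and it only remains to show it is proper, i.e.\ that $S \cup \link_\ga(S) \neq V$.

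This last step is the crux of the argument, and is where irreducibility enters. I would argue by contradiction: if $S \cup \link_\ga(S) = V$, then, using that $\link_\ga(S) \cap S = \emptyset$ (which holds because $S \neq \emptyset$ and no vertex lies in its own link), I obtain a partition $V = S \sqcup \link_\ga(S)$ with $\link_\ga(S) = V - S$. Here $S \neq \emptyset$ and $V - S \neq \emptyset$ (the latter because $S \subsetneqq V$), so $V$ splits as a disjoint union of two non-empty sets $A = S$ and $B = \link_\ga(A)$, contradicting the irreducibility of $\ga$. Hence $S \cup \link_\ga(S) \subsetneqq V$, so $\No_G(L)$ is a proper parabolic subgroup containing $\No_G(N)$, as required. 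The main thing to be careful about is the bookkeeping guaranteeing that $S$ and $V - S$ are non-empty simultaneously, which is exactly where the two hypotheses ($N$ non-trivial and $\pc_\ga(N)$ proper) are both needed.
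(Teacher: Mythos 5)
Your proposal is correct and follows the paper's own proof essentially verbatim: combine Proposition~\ref{prop:min_parab} with Lemma~\ref{lem:parab_normal} to get $\No_G(N) \leqslant \No_G(\pc_\ga(N))$, normalize $S$ so all its vertex groups are non-trivial, compute the normalizer via Proposition~\ref{prop:normalizer_of_parab}, and use irreducibility of $\ga$ (with $S \neq \emptyset$ from $N \neq \{1\}$) to conclude $S \cup \link_\ga(S) \neq V$. Your only addition is to spell out the disjointness $S \cap \link_\ga(S) = \emptyset$ behind the final irreducibility step, which the paper leaves implicit; that bookkeeping is accurate.
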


\begin{proof} Combining Proposition~\ref{prop:min_parab} with Lemma~\ref{lem:parab_normal} we see that $\No_G(N) \leqslant \No_G(K)$, where $K\coloneqq \pc_\ga(N)$.
By the assumptions, $K=fG_Sf^{-1}$ for some $S \subsetneqq V$ and $f \in G$; after discarding all $s \in S$ with trivial vertex groups, we can suppose that $G_{s} \neq \{1\}$ for each $s \in S$.
Then $\No_G(K)=fG_{S\cup \link_\ga(S)}f^{-1}$, according to Proposition~\ref{prop:normalizer_of_parab}.
Since $S \neq\emptyset$ (as  $\{1\} \neq N \leqslant K$), we can conclude that $\No_G(K)$
is a proper parabolic subgroup of $G$ because $S\cup \link_\ga(S) \neq V$ as $\ga$ is irreducible.
\end{proof}

\begin{lemma}\label{lem:virt_cyc-parab} If $\ga$ is a finite irreducible graph and $H \leqslant G$ is a virtually cyclic subgroup which is not contained in any proper parabolic subgroup of $G$,
then $H \cap K$ is finite for each proper parabolic subgroup $K$ of $G$.
\end{lemma}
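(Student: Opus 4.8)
The plan is to argue by contradiction. Suppose some proper parabolic subgroup $K \leqslant G$ has $H \cap K$ infinite. Since a finite group meets every subset in a finite set, $H$ must then be infinite, hence infinite virtually cyclic, so it contains an infinite cyclic subgroup $C$ of finite index. The strategy is to manufacture from $H \cap K$ a nontrivial \emph{normal} subgroup of $H$ whose parabolic closure is proper, and then to conclude, via Corollary~\ref{cor:norm_in_irred}, that $H$ itself lies in a proper parabolic subgroup of $G$, contradicting the hypothesis on $H$.

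First I would check that $N \coloneqq H \cap K$ has finite index in $H$. Indeed, $C \cap N$ has finite index in the infinite group $N$ (because $C$ has finite index in $H \geqslant N$), so $C \cap N$ is an infinite, hence finite-index, subgroup of the infinite cyclic group $C$; since $C$ has finite index in $H$, this forces $N$ to have finite index in $H$. Next I would pass to the normal core $N_0 \coloneqq \bigcap_{h \in H} hNh^{-1}$, which, being an intersection of the finitely many conjugates of a finite-index subgroup, is a finite-index (and therefore infinite, hence nontrivial) normal subgroup $N_0 \trianglelefteq H$ with $N_0 \leqslant N \leqslant K$.

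Now, as $\ga$ is finite, Proposition~\ref{prop:min_parab} guarantees that the parabolic closure $\pc_\ga(N_0)$ exists. Since $\pc_\ga(N_0) \cap K$ is again parabolic (Corollary~\ref{cor:inter_parab}), contains $N_0$, and is contained in $\pc_\ga(N_0)$, minimality of the latter forces $\pc_\ga(N_0) \leqslant K \subsetneq G$; thus $\pc_\ga(N_0)$ is a \emph{proper} parabolic subgroup of $G$. Because $\ga$ is finite and irreducible and $N_0 \neq \{1\}$, Corollary~\ref{cor:norm_in_irred} applies and shows that $\No_G(N_0)$ is contained in a proper parabolic subgroup of $G$. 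But $N_0$ is normal in $H$, so $H \leqslant \No_G(N_0)$, whence $H$ lies in a proper parabolic subgroup, contradicting the assumption on $H$. This contradiction proves that $H \cap K$ is finite.

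The key conceptual step, and the main potential obstacle, is the reduction to a \emph{normal} subgroup: Corollary~\ref{cor:norm_in_irred} controls the normalizer of the object whose parabolic closure is proper, so it only bites once $H$ actually normalizes that object, which $H \cap K$ need not do; passing to the core $N_0$ is exactly what repairs this. The supporting facts, namely that an infinite subgroup of a virtually cyclic group has finite index and that the parabolic closure of a subset of $K$ stays inside $K$, are routine once this framework is set up.
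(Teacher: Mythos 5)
Your proof is correct and follows essentially the same route as the paper: both arguments produce a non-trivial subgroup of $H\cap K$ that is normal in $H$, note that its parabolic closure lies in $K$ and is therefore proper, and conclude via Corollary~\ref{cor:norm_in_irred} that $H\leqslant \No_G(N_0)$ sits in a proper parabolic subgroup, a contradiction. The only difference is cosmetic: the paper extracts an infinite cyclic subgroup $N\leqslant H\cap K$ with $N\lhd H$ (using the standard structure of virtually cyclic groups), whereas you take the normal core of the finite-index subgroup $H\cap K$, which is a slightly more elementary way to achieve the same normality.
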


\begin{proof} Suppose that $H\cap K$ is infinite for some proper parabolic subgroup $K\leqslant G$. Then there is an infinite cyclic subgroup $N\leqslant H \cap K$
such that $N \lhd H$. Observe that $\pc_\ga(N) \leqslant K$ is a proper parabolic subgroup and $H \leqslant \No_G(N)$, therefore, $H$ must also be contained in a
proper parabolic subgroup by Corollary \ref{cor:norm_in_irred}, contradicting to our assumptions. Hence $|H\cap K|<\infty$.
\end{proof}

\begin{prop} \label{prop:fin_parab_cl} Suppose that the graph $\ga$ is finite and $X \subseteq G=\GG$. Then there is a finite subset $X' \subseteq X$ such that $\pc_\ga(X)=\pc_\ga(X')$.
\end{prop}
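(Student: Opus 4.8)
The plan is to exploit the finiteness of $\ga$ by attaching to every parabolic subgroup a well-defined \emph{support} $S \subseteq V$ and then running a maximality argument on $|S|$. Since $\ga$ is finite, Proposition~\ref{prop:min_parab} guarantees that $\pc_\ga(Y)$ exists for every subset $Y \subseteq G$; in particular $\pc_\ga(X')$ exists for each finite $X' \subseteq X$. First I would record a monotonicity principle: if $Y_1 \subseteq Y_2$, then $\pc_\ga(Y_2)$ is a parabolic subgroup containing $Y_1$, so by the minimality defining $\pc_\ga(Y_1)$ we get $\pc_\ga(Y_1) \leqslant \pc_\ga(Y_2)$. Applied with $Y_2 = X$ this gives $\pc_\ga(X') \leqslant \pc_\ga(X)$ for every finite $X' \subseteq X$.

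Next I would make the support into a genuine invariant. Writing a parabolic subgroup as $fG_Sf^{-1}$ and discarding every $s \in S$ with $G_s=\{1\}$ (which leaves the subgroup unchanged), I may assume $G_s \neq \{1\}$ for all $s \in S$. Corollary~\ref{cor:non-tr_parab_in_other}, applied to the two inclusions arising from an equality of two such presentations, shows that $S$ is then uniquely determined by the subgroup. The same corollary shows that if one such (reduced) parabolic subgroup is contained in another, then the support of the smaller is contained in the support of the larger; hence $|S|$ is monotone under inclusion and bounded above by $|V| < \infty$.

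Now I would choose a finite subset $X' \subseteq X$ for which $|S'|$ is as large as possible, where $S'$ denotes the support of $\pc_\ga(X')$; this is legitimate because these cardinalities form a set of integers bounded by $|V|$. I claim $\pc_\ga(X') = \pc_\ga(X)$. To prove it, fix an arbitrary $x \in X$ and set $X'' \coloneqq X' \cup \{x\}$. Monotonicity gives $\pc_\ga(X') \leqslant \pc_\ga(X'')$, so $S' \subseteq S''$ for the corresponding supports; maximality of $|S'|$ forces $S' = S''$. Writing $\pc_\ga(X') = g_1 G_{S'} g_1^{-1}$ and $\pc_\ga(X'') = g_2 G_{S'} g_2^{-1}$, conjugating the inclusion by $g_2^{-1}$ turns it into $(g_2^{-1}g_1)G_{S'}(g_2^{-1}g_1)^{-1} \leqslant G_{S'}$, and Lemma~\ref{lem:conj_of_parab} upgrades this to equality. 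Hence $\pc_\ga(X'') = \pc_\ga(X')$, and in particular $x \in \pc_\ga(X')$.

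Since $x \in X$ was arbitrary, $X \subseteq \pc_\ga(X')$; as $\pc_\ga(X)$ is the smallest parabolic subgroup containing $X$, this yields $\pc_\ga(X) \leqslant \pc_\ga(X')$, which together with the reverse inclusion from monotonicity gives the desired equality. I expect the main obstacle to be the middle step: turning the support into an honest invariant of a parabolic subgroup and establishing its monotonicity — this is exactly the content of Corollary~\ref{cor:non-tr_parab_in_other} once trivial vertex groups are removed — together with the final rigidity statement that an inclusion of parabolic subgroups of equal support must be an equality, which is where Lemma~\ref{lem:conj_of_parab} is essential.
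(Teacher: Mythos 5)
Your proposal is correct and follows essentially the same route as the paper's own proof: both maximize the cardinality of the (reduced) support $|S(Y)|$ over finite subsets $Y \subseteq X$, deduce $S' = S''$ for $X'' = X' \cup \{x\}$ via Corollary~\ref{cor:non-tr_parab_in_other}, and upgrade the inclusion of equal-support parabolics to an equality via Lemma~\ref{lem:conj_of_parab}. The only difference is presentational — you make the well-definedness and monotonicity of the reduced support explicit, which the paper leaves implicit — so there is nothing to fix.
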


\begin{proof} By Proposition~\ref{prop:min_parab}, for every finite subset $Y \subseteq X$, there exist $S(Y) \subseteq V$ and $f=f(Y) \in G$ such that
$\pc_\ga(Y)=fG_{S(Y)}f^{-1}$ and $G_s \neq \{1\}$ for all $s \in S(Y)$.
Since $V=V\ga$ is finite, the function, that corresponds to each finite subset $Y \subseteq X$ the integer value $|S(Y)|$, attains its maximum
on some finite subset $X' \subseteq X$. Evidently, to prove the claim it is enough to show that $X \subseteq \pc_\ga(X')$.

For any $x \in X$ we can find $T \subseteq V$ and $g \in G$ such that $\pc_\ga(X' \cup \{x\})=gG_Tg^{-1}$ and $G_t \neq \{1\}$ for all $t \in T$.
By the choice of $X'$, we have $|T|\le|S|$ and $fG_Sf^{-1} \leqslant gG_Tg^{-1}$, where
$S:=S(X') \subseteq V$ and $f:=f(X') \in G$. It follows that 
$S \subseteq T$ (Corollary \ref{cor:non-tr_parab_in_other}), which, in view of $|T|\le|S|$, implies that $S=T$. Finally, the inclusion $fG_Tf^{-1} \leqslant gG_Tg^{-1}$ together with Lemma~\ref{lem:conj_of_parab}
yield $fG_Sf^{-1} =fG_Tf^{-1}= gG_Tg^{-1}$, allowing to conclude that $x \in\pc_\ga(X' \cup \{x\})= \pc_\ga(X')$ for all $x \in X$, thus finishing the proof.
\end{proof}

\begin{cor} If $\ga$ is finite and $G=\GG$ is the graph product of groups with respect $\ga$, then any descending/ascending chain of parabolic subgroups stabilizes.
\end{cor}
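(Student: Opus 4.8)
The plan is to handle the descending and ascending cases in parallel and reduce both to the finiteness of $V=V\ga$. The guiding idea is that a parabolic subgroup is determined, up to the ambiguity carried by its conjugating element, by a subset of $V$; and subsets of a finite set admit no strictly monotone infinite chain. So I would first convert each chain of parabolics into a monotone chain of subsets of $V$, and then separately argue that once these subsets stabilize the parabolic subgroups themselves must stabilize.

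First I would normalize each term. Given a parabolic $P=gG_Sg^{-1}$, I discard from $S$ every vertex $s$ with $G_s=\{1\}$; since such vertices contribute nothing, $G_S=G_{S'}$ for $S'\coloneqq\{s\in S \mid G_s\neq\{1\}\}$, so each term $P_i$ of the chain may be written as $P_i=g_iG_{S_i}g_i^{-1}$ with $G_s\neq\{1\}$ for all $s\in S_i$. With this normalization in hand, I would extract the monotone chain of supports. In a descending chain $P_1\geqslant P_2\geqslant\cdots$, each inclusion $P_{i+1}\leqslant P_i$ together with the nontriviality of the vertex groups indexed by $S_{i+1}$ lets me apply Corollary~\ref{cor:non-tr_parab_in_other} (to the smaller subgroup $P_{i+1}$) and conclude $S_{i+1}\subseteq S_i$. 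In an ascending chain the same corollary, now applied to the smaller subgroup $P_i$, gives $S_i\subseteq S_{i+1}$. Either way I obtain a monotone chain of subsets of the finite set $V$, which necessarily stabilizes: there is $N$ with $S_n=S_{n+1}$ for all $n\geq N$; denote this common subset by $S$.

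The final and genuinely substantive step is to upgrade stabilization of the supports $S_i$ to stabilization of the subgroups $P_i$, since a priori the conjugators $g_i$ may keep changing while the supports stay fixed. Here I would exploit that the full subgroup $G_S$ is itself parabolic. In the descending case, for $n\geq N$ I rewrite the inclusion $g_{n+1}G_Sg_{n+1}^{-1}\leqslant g_nG_Sg_n^{-1}$, conjugate by $g_n^{-1}$, and set $h\coloneqq g_n^{-1}g_{n+1}$, turning it into $hG_Sh^{-1}\leqslant G_S$; Lemma~\ref{lem:conj_of_parab} then forces $hG_Sh^{-1}=G_S$, whence $P_{n+1}=P_n$. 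The ascending case is identical after conjugating by $g_{n+1}^{-1}$.

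I expect this last step to be the main point to watch: it is tempting to declare the chain stable as soon as the supports $S_i$ stabilize, but that conclusion is false without controlling the conjugating elements, and it is exactly here that Lemma~\ref{lem:conj_of_parab} (through the implication $hG_Sh^{-1}\leqslant G_S\Rightarrow hG_Sh^{-1}=G_S$) does the real work. Everything else is the routine finiteness argument on subsets of $V$.
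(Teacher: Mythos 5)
Your proof is correct, and it takes a genuinely different route from the one the paper intends. The paper leaves this corollary as an exercise, pointing at Propositions \ref{prop:min_parab} and \ref{prop:fin_parab_cl}: for an ascending chain $P_1\leqslant P_2\leqslant\cdots$ the hinted argument is to set $X=\bigcup_i P_i$, note that $\pc_\ga(X)$ exists since $\ga$ is finite and equals $\pc_\ga(X')$ for some finite $X'\subseteq X$ by Proposition \ref{prop:fin_parab_cl}; since $X'$ lies in some $P_n$, minimality of the parabolic closure gives $P_m\leqslant \pc_\ga(X)\leqslant P_n$ for all $m$, so the chain stabilizes at $n$. Your argument instead rests on Corollary \ref{cor:non-tr_parab_in_other} and Lemma \ref{lem:conj_of_parab}: after discarding vertices with trivial vertex groups (exactly the normalization of Remark \ref{rem:es_supp}, and necessary, since the corollary's nontriviality hypothesis would otherwise fail) you extract a monotone chain of supports $S_i\subseteq V$, which stabilizes by finiteness of $V$, and then you correctly refuse to stop there: the conjugators $g_i$ may still vary, and it is the implication $hG_Sh^{-1}\leqslant G_S\Rightarrow hG_Sh^{-1}=G_S$ of Lemma \ref{lem:conj_of_parab} that forces equality of consecutive terms. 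What each approach buys: the closure-based route is very short for ascending chains but adapts less obviously to descending ones, where one would fall back on support monotonicity much as you do; your mechanism is symmetric in both directions, and your explicit flagging of the conjugator issue is precisely the point where a careless version of this exercise would go wrong. Both arguments stay entirely within the paper's toolkit.
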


\begin{proof} This is an easy consequence of Propositions~\ref{prop:min_parab} and~\ref{prop:fin_parab_cl}, and is left as an exercise for the reader.
\end{proof}


\section{Subgroups that contain no {\nafs s}}\label{S:structuretheorem}
Throughout this section $\Gamma$ will be a finite simplicial graph, $V=V\ga$, $G=\GG$ will be the graph product of a family of groups $\G=\{G_v \mid v \in V\}$ with respect to $\ga$.

Suppose that $G$ acts on a simplicial tree $\mathcal T$ without edge inversions. It is well known that any element $g \in G$ either fixes a vertex of $\mathcal T$
or there exists a unique minimal $\gen{g}$-subtree of $\mathcal T$, which is a bi-infinite geodesic path (called \textit{the axis} of $g$), where $\gen{g}$ acts by translation
(see \cite[I.4.11,I.4.13]{Dicks-Dunwoody} or  \cite[1.3]{CullerMorgan}).
In the former case $g$ is said to be \textit{elliptic}, and in the latter case it is said to be \textit{hyperbolic}. The \textit{translation length} $\|g\|$, of $g$, is the integer defined by
$\|g\|\coloneqq \min\{d_{\mathcal T}(p,g \circ p) \mid p \mbox{ is a vertex of }\mathcal T\}$, where $d_{\mathcal T}$ is the standard edge-path metric on $\mathcal T$. If the element $g \in G$
is hyperbolic then $d_\mathcal{T}(p,g \circ p)=\|g\|>0$ for every vertex $p \in \axis(g)$ (see \cite[1.1.3]{CullerMorgan}).

The following theorem allows us to describe the structure of any subgroup of a graph product (with respect to a finite graph), which has no {\nafs s}.

\begin{thm}\label{thm:structure} Suppose that the graph $\ga$ is finite, irreducible and has at least two vertices.
If $H \leqslant G$ is a subgroup then at least one of the following is true:

\begin{enumerate}
	\item $H$ is contained in a proper parabolic subgroup of $G$;
	\item $H \cong \Z$;
	\item $H \cong \D_\infty$;
	\item \label{it:4} for every proper subset $S \subsetneqq V$ there is a subgroup $F=F(S) \leqslant H$ such that $F \cong\mathbb{F}_2$ and $F \cap gG_Sg^{-1}=\{1\}$ in $G$ for all $g \in G$.	
\end{enumerate}
\end{thm}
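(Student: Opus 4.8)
The plan is to analyze the action of $H$ on the Bass–Serre tree $\mathcal{T}$ coming from the splitting $G = G_A *_{G_C} G_B$, where $v \in V$ is a chosen vertex, $C = \link_\ga(v)$, $B = \{v\}\cup \link_\ga(v)$, and $A = V - \{v\}$. The vertex stabilizers of this action are precisely the conjugates of $G_A$ and $G_B$, which are proper parabolic subgroups (here we use that $\ga$ is irreducible with at least two vertices, so both $A$ and $B$ are proper subsets of $V$). The restriction of this action to $H$ then gives a dichotomy. First I would dispose of the case where $H$ fixes a vertex of $\mathcal{T}$: then $H$ is contained in a conjugate of $G_A$ or $G_B$, i.e.\ in a proper parabolic subgroup, giving conclusion (1). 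So I may assume $H$ fixes no vertex.

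\textbf{Reduction via elliptic elements.} If every element of $H$ is elliptic but $H$ fixes no vertex, standard Bass–Serre theory (e.g.\ the theorem that a group acting on a tree with all elements elliptic either fixes a vertex or fixes an end) would force $H$ to fix an end of $\mathcal{T}$; I expect to rule this out or to show it leads to $H$ lying in a proper parabolic (since the stabilizer of an end that is a limit of a coherent ray of vertex groups is again controlled by parabolics via Proposition~\ref{prop:inter_parab} and Lemma~\ref{lem:parab_normal}). The substantive case is therefore when $H$ contains a hyperbolic element $h$. I would then look at the subgroup structure generated by the dynamics of hyperbolic isometries on $\mathcal{T}$.

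\textbf{Producing a free subgroup.} The heart of the argument is a ping-pong on the tree. Given hyperbolic $h \in H$ with axis $\axis(h)$, either all of $H$ stabilizes a single axis (equivalently preserves $\axis(h)$ setwise), in which case $H$ maps to $\mathrm{Isom}(\mathbb{R})$ and is virtually cyclic, landing in cases (2) or (3), namely $H \cong \Z$ or $H\cong \D_\infty$; or $H$ contains two hyperbolic elements with distinct axes (more precisely with disjoint or transversally-intersecting endpoint sets), from which a Klein–Ping-Pong / Tits-type lemma produces $F \cong \mathbb{F}_2$ freely generated by suitable high powers. To upgrade this to conclusion \eqref{it:4}, I would need, for each fixed proper $S \subsetneqq V$, that the generators of $F$ can be chosen to be hyperbolic not just for the single splitting above but to act with large translation length on \emph{every} relevant tree, so that no non-trivial element of $F$ is conjugate into $G_S$. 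The clean way is to observe that $gG_Sg^{-1}$ is a proper parabolic, hence (intersected appropriately) elliptic in a suitable splitting, and then choose the ping-pong powers large enough that every non-trivial reduced word in the generators is hyperbolic and thus avoids all elliptic (parabolic) subgroups simultaneously; finiteness of $V$ guarantees only finitely many splittings $G = G_{A}*_{G_C}G_{B}$ need to be controlled at once.

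\textbf{Main obstacle.} The genuinely delicate point is conclusion \eqref{it:4}: it is not enough to find one copy of $\mathbb{F}_2$, one must find, for \emph{every} proper $S$, a copy meeting every conjugate $gG_Sg^{-1}$ trivially. I expect this to require a uniform statement that a non-trivial element of $G$ lying in some proper parabolic must be elliptic in one of the finitely many vertex-splittings (because its parabolic closure is proper, by the theory of Section~\ref{S:parabolic}), combined with a ping-pong argument that simultaneously makes all non-trivial words of $F$ hyperbolic across all these splittings. Managing this simultaneity, rather than the existence of a single free subgroup, is where the real work lies; the elliptic/end-fixing case analysis and the virtually-cyclic classification into \eqref{it:4}'s alternatives (2) and (3) should be comparatively routine once the parabolic machinery of the previous section is invoked.
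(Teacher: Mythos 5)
Your overall frame (the amalgam splitting $G=G_A*_{G_C}G_B$ at a vertex $v$ and the classification of the $H$-action on the Bass--Serre tree $\mathcal T$) is the same as the paper's, but your self-identified ``main obstacle'' is a misdiagnosis, and it is precisely the point you leave unproved. Since the free subgroup in conclusion (4) is allowed to depend on $S$, no simultaneous control over several splittings is ever needed: for a fixed $S\subsetneqq V$ one chooses $v\in V-S$, so that $S\subseteq A$ and every conjugate $gG_Sg^{-1}$ lies inside a vertex stabilizer of the \emph{single} tree $\mathcal T$ for that splitting; any $F\cong\mathbb{F}_2$ acting freely on $\mathcal T$ then satisfies $F\cap gG_Sg^{-1}=\{1\}$ for all $g$ automatically. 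The paper runs this in contrapositive: if (4) fails for some $S$, then no non-abelian free subgroup of $H$ acts freely on that one tree, so by Theorem 2.7 of Culler--Morgan the action of $H$ is reducible (every element elliptic, an invariant line, or a fixed end), and the three reducible cases deliver conclusions (1)--(3). In particular the ping-pong you defer is never performed by hand across multiple trees --- the Culler--Morgan dichotomy packages it for one tree, and the quantifier structure of (4) makes one tree per $S$ suffice. As written, your proposal establishes none of conclusion (4).

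There are also two genuine errors in your case analysis of the reducible situations. First, in the invariant-line case you assert that $H$ preserving a single axis is virtually cyclic; this is false, because the kernel $N$ of the induced map $H\to \D_\infty$ (the pointwise stabilizer of the line, which sits inside conjugates of $G_C$) can be non-trivial --- e.g.\ $H$ could contain $G_C\times\gen{h}$ with $h$ hyperbolic. The correct conclusion there is (1): the paper shows $N$ lies in a proper parabolic and then $H\leqslant \No_G(N)$ is contained in a proper parabolic by Corollary \ref{cor:norm_in_irred}, which is exactly where irreducibility of $\ga$ enters. Second, your trichotomy after finding a hyperbolic element omits the fixed-end configuration in which $H$ contains both elliptic and hyperbolic elements (your end-fixing discussion assumes all elements elliptic); the paper handles it by showing the elliptic elements of $H$ form a normal subgroup $N$, placing $N$ in a proper parabolic via Proposition \ref{prop:fin_parab_cl} --- this proposition is also what rescues the all-elliptic case when $H$ is not finitely generated, a subtlety your sketch glosses over since Serre's fixed-point theorem needs finite generation --- and then again invoking Corollary \ref{cor:norm_in_irred} when $N\neq\{1\}$, while a minimal-translation-length argument yields $H\cong\Z$ when $N=\{1\}$.
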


\begin{proof} Suppose that \eqref{it:4} does not hold, i.e., there is a subset $S\subsetneqq V$ such that
every \nafs{} of $H$ intersects non-trivially some conjugate of $G_S$ in $G$ (note that this is also the case if $H$ contains no {\nafs s} at all).
Choose any vertex $v \in V\setminus S$ and set $C\coloneqq \link_\ga(v)$, $B\coloneqq C \sqcup \{v\}$ and $A\coloneqq V-\{v\}$;
observe that $S \subseteq A$.
As $\ga$ is irreducible and $|V|\ge 2$, $A$ and $B$ are non-empty proper subsets of $V$.
Then $G$ naturally splits as the amalgamated free product: $G=G_A*_{G_C}G_B$. Let $\mathcal T$ be the Bass-Serre tree, corresponding to this splitting, on which $G$ acts simplicially and
without edge inversions. 
The vertex stabilizers of this splitting are conjugates of $G_A$ and $G_B$, and the edge stabilizers are conjugates of $G_C$ (see \cite[I.4.1, Thm. 7]{Serre}), all of which are proper
parabolic subgroups in $G$.

Since every \nafs{} of $H$ has non-trivial intersection with some conjugate of $G_A$ (and thus it cannot act freely on $\mathcal T$), by Theorem 2.7 from \cite{CullerMorgan}
the induced action of $H$ on $\mathcal T$ must be \textit{reducible}, which means that one of the following occurs:
\begin{itemize}
	\item[(i)] every element of $H$ fixes a vertex of $\mathcal T$;
	\item[(ii)] there is a bi-infinite geodesic line $\mathcal L$ which is invariant under the action of $H$;
	\item[(iii)] there is an end of $\mathcal T$ which is fixed by $H$.
\end{itemize}

In case (i), by Proposition~\ref{prop:fin_parab_cl}, there is a finite subset $X' \subseteq H$ such that $H \leqslant \pc_\ga(X')$.
The subgroup $H'\coloneqq \gen{X'} \leqslant H$ is finitely generated and every element of $H'$ fixes a vertex of $\mathcal T$, therefore
$H'$ fixes some vertex of $\mathcal T$ (see \cite[I.6.5, Cor. 3]{Serre}). But full $G$-stabilizers of vertices of $\mathcal T$ are proper parabolic subgroups of $G$, therefore
$\pc_\ga(X')=\pc_\ga(H')$ is a proper parabolic subgroup of $G$. Thus $H$ will be contained in a proper parabolic subgroup of $G$.

Assume, now, that we are in case (ii). Then $H$ acts on the bi-infinite geodesic line $\mathcal L$, giving rise to
a homomorphism $\phi: H \to \mathbb{D}_\infty$, where $\mathbb{D}_\infty$ is the infinite dihedral group of all simplicial isometries of $\mathcal L$.
Moreover, we can suppose that $\phi(H)$ is infinite, because otherwise $H$ would fix a vertex of $\mathcal L$ and we would be in case (i).
Denote $N\coloneqq \ker\phi \lhd H$; then $N$ fixes every vertex of $\mathcal L$ and so it is contained in a proper parabolic subgroup of $G$ by case (i).
If $N=\{1\}$ then $H$ is isomorphic to an infinite subgroup of $\D_\infty$, which is either infinite cyclic or infinite dihedral.
If $N \neq \{1\}$ then $H \leqslant \No_G(N)$ is contained in a proper parabolic subgroup of $G$ by Corollary~\ref{cor:norm_in_irred}.

Finally, assume that we are in case (iii), i.e., $H$ fixes some end $e$ of $\mathcal T$. If an element $g \in H$ fixes some vertex $o$ of $\mathcal T$, then it will have to fix every vertex of the
unique infinite geodesic ray between $o$ and $e$. If $h \in H$ is another elliptic element, then it will fix (pointwise) another geodesic ray converging to $e$. But any two rays
converging to $e$ are eventually the same, in particular they will have a common vertex, which will then be fixed by both $g$ and $h$. Thus $gh \in H$ will also be elliptic.
It follows that the subset $N \subseteq H$, of all elliptic elements of $H$, is a normal subgroup of $H$. The argument for case (i) shows that $N$ is contained in a proper  parabolic subgroup of $G$.
And if $N \neq \{1\}$ then  Corollary~\ref{cor:norm_in_irred} yields that $H \leqslant \No_G(N)$ is contained in a proper parabolic subgroup of $G$.

Thus, we can assume that $N=\{1\}$, in other words all non-trivial elements of $H$ are hyperbolic. Let $x \in H- \{1\}$ be an element of minimal translation length
(for the action of $H$ on $\mathcal T$). For any other element $y \in H-\{1\}$, the intersection of $\axis(x)$ and $\axis(y)$ is an infinite geodesic ray $\mathcal R$,
starting at some vertex $p$ of $\mathcal T$ and converging to $e$. After replacing $x$ and $y$ with their inverses, where necessary, we assume that
$x \circ \mathcal{R} \subset \mathcal{R}$ and $y \circ \mathcal{R} \subset \mathcal{R}$.
Write $\|y\|=m\|x\|+n$, where $m \in \N$, $n \in \N\cup\{0\}$ and $n <\|x\|$. Then $y \circ p, (x^{-m}y) \circ p \in \mathcal{R}$ and so
$d_{\mathcal T}\left(p,(x^{-m}y)\circ p\right)=\|y\|-m\|x\|=n$, thus $\|x^{-m}y\| \le n < \|x\|$, hence $x^{-m}y=1$
by the minimality of $\|x\|$. Thus $y \in \gen{x}$ for all $y \in H$, i.e., $H = \gen{x}$ is cyclic. Since $x$ is hyperbolic, it must have infinite order (cf. \cite[I.4.11]{Dicks-Dunwoody}),
implying that $H \cong \Z$.
\end{proof}

\begin{rem}\label{rem:4-core} In the assumptions of Theorem \ref{thm:structure}, assume that the condition \eqref{it:4} is satisfied. Then $H$ has a finitely generated
subgroup $M$ such that $M$ is not contained in any proper parabolic subgroup of $G$ and $M$ contains a free subgroup of rank $2$.
Indeed, simply let $M$ to be the subgroup of $H$ generated by $\{F(S) \mid S \subsetneqq V\}$.
\end{rem}

The above Structure Theorem can be used to give a more precise description of subgroups of graphs products that do not contain {\nafs s}.
Let $\mathcal{I}$ be a collection of cardinals. For $n=1,2,3,\dots$ we define classes of groups $\ISG_n$ as follows:

\noindent $\ISG_1$ is the class consisting of all $\mathcal{I}$-generated groups, which are isomorphic to subgroups of $G_v,$ $v\in V\ga,$ and which do not contain a copy of \ff.

\noindent $\ISG_2$ is the smallest class, closed under isomorphisms, containing all $\mathcal{I}$-generated groups $H$, such that $H$ is a subgroup of $A \times B$ for some
$A,B \in \ISG_1$, and satisfying the next conditions: $\Z \in\ISG_2$ and, if  for some $v\in V\ga$ the vertex group $G_v$ has $2$-torsion, then $\D_{\infty} \in \ISG_2$.

\noindent $\ISG_n$, $n\geq 3$, is the class consisting of all $\mathcal{I}$-generated groups $H$, such that $H$ is isomorphic to a subgroup of $A \times B$ for some
$A \in \ISG_{l}$ and $B \in \ISG_m$ with $l,m \in \N$, $l+m=n$.

\begin{thm}\label{thm:sharpA} Suppose that $\mathcal{I}$ is a collection of cardinals, $\ga$ is a non-empty finite graph and $G=\GG$ is
the graph product of a family of groups $\G$ with respect to $\ga$.
If $H \leqslant G$ is an $\mathcal{I}$-generated subgroup, which does not contain a copy of \ff, then $H$ is in the class $\ISG_{|V\ga|}.$
\end{thm}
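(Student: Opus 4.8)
The plan is to induct on the number of vertices $N\coloneqq|V\ga|$, using the Structure Theorem (Theorem~\ref{thm:structure}) as the engine that reduces the irreducible case to a smaller graph product. The base case $N=1$ is immediate: then $G=G_v$ for a single vertex $v$, so $H\leqslant G_v$ is an $\mathcal I$-generated subgroup containing no copy of $\mathbb F_2$, which is precisely the definition of $\ISG_1$. For the inductive step I would first dispose of the reducible case, and then handle the irreducible case via the four alternatives of Theorem~\ref{thm:structure}.

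If $\ga$ is \emph{not} irreducible, then $V=A\sqcup B$ with $B=\link_\ga(A)$ and $A,B$ non-empty proper subsets, so $G=G_A\times G_B$. The projections send $H$ to $\mathcal I$-generated subgroups $H_A\leqslant G_A$ and $H_B\leqslant G_B$ (homomorphic images of $\mathcal I$-generated groups are $\mathcal I$-generated), neither of which contains $\mathbb F_2$ since $H$ does not; and $H$ embeds into $H_A\times H_B$. By the inductive hypothesis $H_A\in\ISG_{|A|}$ and $H_B\in\ISG_{|B|}$ with $|A|+|B|=N$, so the definition of $\ISG_N$ (for $N\ge 3$; and the definition of $\ISG_2$ when $N=2$, noting $|A|=|B|=1$) places $H$ in $\ISG_N$.

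Now suppose $\ga$ is irreducible with $N\ge 2$ vertices, and apply Theorem~\ref{thm:structure} to $H$. Cases (2) and (3) give $H\cong\Z$ or $H\cong\D_\infty$, and I would check these land in $\ISG_2\subseteq\ISG_N$: the inclusion $\Z\in\ISG_2$ is by definition, and $H\cong\D_\infty$ forces some vertex group to have $2$-torsion (since $\D_\infty$ contains an involution that, via the action on the Bass--Serre tree, must be conjugate into a vertex stabilizer), whence $\D_\infty\in\ISG_2$ by definition. Case (4) cannot occur, because it would exhibit a copy of $\mathbb F_2$ inside $H$, contradicting the hypothesis. The remaining case (1) is the substantive one: $H$ is contained in a proper parabolic subgroup $fG_Sf^{-1}$ with $S\subsetneqq V$. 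Conjugating by $f^{-1}$, I may assume $H\leqslant G_S$, and $G_S$ is itself a graph product over the full subgraph $\ga_S$ with $|S|\le N-1$ vertices. Here I must be slightly careful: the definition of $\ISG_n$ is relative to the ambient graph, so I would invoke the inductive hypothesis for the graph product $G_S$ to get $H\in\ISG_{|S|}$ computed in $G_S$, and then argue that this class embeds into $\ISG_{N}$ computed in $G$ (the $\ISG_1$-building-blocks for $G_S$ are subgroups of vertex groups $G_v$ with $v\in S\subseteq V$, hence building blocks for $G$, and the padding $\ISG_{|S|}\subseteq\ISG_{N}$ follows by iterating the trivial factor $\{1\}\in\ISG_1$).

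I expect the main obstacle to be precisely this bookkeeping at the boundary between the two graphs in case (1): reconciling the class $\ISG_{|S|}$ defined over $\ga_S$ with $\ISG_{N}$ defined over $\ga$, and verifying the monotonicity $\ISG_{l}\subseteq\ISG_{N}$ for $l\le N$. The latter requires knowing that one can always inflate the index by taking a direct product with the trivial group, which sits in $\ISG_1$; one must confirm $\{1\}$ qualifies (it is $\mathcal I$-generated, embeds in any $G_v$, and contains no $\mathbb F_2$) and that $H\cong H\times\{1\}$ respects the recursive product structure. A secondary subtlety is ensuring in the irreducible cases that the dihedral quotient genuinely witnesses $2$-torsion in a vertex group rather than merely in $G$, which is why the Bass--Serre/Normal Form analysis of involutions is needed; everything else is a direct application of the inductive hypothesis together with the fact that $\mathcal I$-generation and the absence of $\mathbb F_2$ pass to subgroups and quotients.
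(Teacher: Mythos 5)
Your proposal is correct and follows essentially the same route as the paper's proof: induction on $|V\ga|$, the projection argument $H\leqslant \rho_A(H)\times\rho_B(H)$ in the reducible case, the Structure Theorem in the irreducible case, and the padding observation $\ISG_{n-1}\subseteq\ISG_n$ (via $\{1\}\in\ISG_1$) that the paper records as its opening remark. The only cosmetic deviation is your justification that $H\cong\D_\infty$ forces $2$-torsion in a vertex group via elliptic involutions on the Bass--Serre tree, where the paper instead cites Corollary~\ref{cor:torfree}; both arguments are valid.
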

\begin{proof} Observe that $\ISG_{n-1} \subseteq \ISG_{n}$ whenever $n \ge 2$.
The proof argues by induction on the number of vertices  in $\ga.$ The claim is evident if $|V\ga|\le 1$. So we will assume that $|V\ga|\ge 2$ and that the result holds for
every graph product over a graph with fewer vertices than $\ga.$

Let $H$ be an $\mathcal{I}$-generated subgroup of $G$ that does not contain a \nafs; we have to show that $H \in \ISG_{|V\ga|}.$
There are two cases to consider.

{\it Case 1: $\ga$ is irreducible.} Since $|V\ga|\ge 2,$  we are under the assumptions of the Structure Theorem (Theorem~\ref{thm:structure}).
Therefore $H$ is either contained in a proper parabolic subgroup  (and, hence, the result follows by induction), or
$H\cong \Z$ or $H \cong \D_\infty$. By the assumptions, $\Z\in \ISG_{n}$  for all $n\geq 2$. On the other hand, the case $H\cong \D_\infty$ can only happen if $G$ has $2$-torsion,
which, in its own turn, only happens when
$G_v$ has $2$-torsion for some $v \in V\ga$ (see, for example, Corollary~\ref{cor:torfree} below), and then $\D_\infty$ is in $\ISG_{n}$ for any $n\geq 2$.

{\it Case 2: $\ga$ is not irreducible.} Then there exist non-empty subsets $A$ and $B$ of $V\ga$ such that $G=G_A\times G_B$ and $|A|+|B|=|V\ga|$.
As $H$ is $\mathcal{I}$-generated, so are $\rho_A(H)$ and $\rho_B(H).$  Since $H$ does not contain a \nafs, neither $\rho_A(H)$ nor $\rho_B(H)$ contain a copy of \ff.
Using the induction hypothesis, $\rho_A(H)\in \ISG_{|A|}$ and  $\rho_B(H) \in \ISG_{|B|}$. It remains to observe that $H$ is contained in the subgroup of $G$,
generated by $\rho_A(H)$ and $\rho_B(H)$, which is naturally isomorphic to  $\rho_A(H)\times\rho_B(H)$, as required.
\end{proof}

Theorem A is almost an immediate consequence of Theorem \ref{thm:sharpA}.
\begin{proof}[Proof of Theorem A] If the collection $\mathcal I$ contains no non-zero cardinals, then the statement trivially holds (since only the trivial group could be
$\mathcal{I}$-generated in this case), therefore we will further assume that $\mathcal I$ contains at least one non-zero cardinal, implying that any cyclic group is
$\mathcal I$-generated.

By the assumptions, all vertex groups $G_v$, $v \in V\ga$, satisfy the Tits Alternative relative to  ($\mathcal{I},\mathcal{C}$). This means that  $\ISG_1 \subseteq \mathcal{C}$.
By (P$0$),(P$1$) and (P$2$), $\mathcal{I}$-generated subgroups of direct products of groups from $\ISG_1$ are in $\mathcal{C}$. By (P$3$), $\Z \in \mathcal{C}$;
and if $G_v$ has $2$-torsion for some $v\in V\ga,$ then $\Z/2\Z \in \mathcal{C}$ (according to the Tits Alternative for $G_v$) and so  $\D_{\infty} \in \mathcal{C}$ by (P$4$).
Therefore $\ISG_2 \subseteq \mathcal{C}.$ Again, by (P$0$),(P$1$) and (P$2$), for $n\geq 3,$ all groups from $\ISG_n$ are in $\mathcal{C}.$

Now, consider an arbitrary $\mathcal{I}$-generated subgroup $H \leqslant G=\GG$, which does not contain a \nafs.
Without loss of generality we can suppose that the graph $\ga$ is non-empty, and so, by Theorem~\ref{thm:sharpA}, $H \in \ISG_{|V\ga|} \subseteq \mathcal{C}$,
finishing the proof.
\end{proof}

Taking $\mathcal I$ to be the collection of all countable cardinals, one can use Theorem \ref{thm:sharpA} to obtain another interesting corollary,
the proof of which is left for the reader:

\begin{cor}\label{cor:rank_ab_in_RAAG}
If $\ga$ is a finite graph and $G$ is a right angled Artin group corresponding to $\ga$ (that is, $G$ is the graph product of infinite cyclic groups with respect to $\ga$)
then the rank  of any abelian subgroup of $G$ does not exceed $|V\ga|$.
\end{cor}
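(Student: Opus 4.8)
The plan is to derive this directly from Theorem~\ref{thm:sharpA}, taking $\mathcal I=\mathcal{I}_\omega$ to be the collection of all countable cardinals. First I would observe that, since $\ga$ is finite and every vertex group is a copy of $\Z$, the group $G$ is finitely generated and hence countable; consequently every subgroup of $G$ is countable, and therefore $\mathcal{I}_\omega$-generated. In particular, if $A \leqslant G$ is any abelian subgroup, then $A$ is $\mathcal{I}_\omega$-generated and contains no copy of \ff{} (being abelian). Theorem~\ref{thm:sharpA} then yields $A \in \ISG_{|V\ga|}$.

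It therefore suffices to bound the rank of abelian subgroups of the groups lying in the classes $\ISG_n$, and I would do this by induction on $n$, proving the claim that every abelian subgroup of a group in $\ISG_n$ has rank at most $n$. Here, as $G$ is torsion-free, rank means the torsion-free rank $\dim_\Q(\,\cdot\otimes_\Z\Q)$, which is monotone under passing to subgroups and satisfies $\rank(W)\le \rank(A')+\rank(B')$ for any subgroup $W$ of a direct product $A'\times B'$ of abelian groups. For the base case $n=1$, I would note that $\ISG_1$ consists only of $\mathcal{I}_\omega$-generated subgroups of the vertex groups $\Z$ with no copy of \ff; hence each member of $\ISG_1$ is isomorphic to $\{1\}$ or to $\Z$, and all of its abelian subgroups have rank at most $1$.

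For the inductive step I would treat $\ISG_2$ and $\ISG_n$ ($n\ge 3$) uniformly, the only point to check separately being that $\D_\infty$ never enters $\ISG_2$ here, since the vertex groups $\Z$ have no $2$-torsion, and that $\Z \in \ISG_2$ itself has rank $1\le 2$. A group $K\in \ISG_n$ is, up to isomorphism, a subgroup of $A\times B$ with $A\in\ISG_l$, $B\in\ISG_m$ and $l+m=n$. Given an abelian subgroup $W\leqslant K$, the projections $\pi_A(W)\leqslant A$ and $\pi_B(W)\leqslant B$ are abelian, so by the induction hypothesis $\rank(\pi_A(W))\le l$ and $\rank(\pi_B(W))\le m$; since $W\leqslant \pi_A(W)\times\pi_B(W)$, subadditivity gives $\rank(W)\le l+m=n$. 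Applying the claim with $n=|V\ga|$ to $A\in\ISG_{|V\ga|}$ (regarded as an abelian subgroup of itself) finishes the proof.

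The only mild subtlety I anticipate is the bookkeeping around the definition of $\ISG_2$ as the smallest isomorphism-closed class with the stated generators; but the torsion-freeness of $G$ removes the $\D_\infty$ case, so $\ISG_2$ reduces to the isomorphism types of subgroups of $\Z\times\Z$ together with $\Z$, all of rank at most $2$, and the induction then proceeds without further complication.
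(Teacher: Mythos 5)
Your proposal is correct and follows exactly the route the paper intends: the paper leaves this corollary as an exercise, explicitly directing the reader to apply Theorem~\ref{thm:sharpA} with $\mathcal{I}=\mathcal{I}_\omega$, which is precisely your reduction, followed by the straightforward induction through the classes $\ISG_n$. The only cosmetic remark is that in this setting every group in $\ISG_n$ is in fact isomorphic to a subgroup of $\Z^n$ (immediate by induction, since the vertex groups are $\Z$ and $\D_\infty$ is excluded by the absence of $2$-torsion), so the abelian subgroup is free abelian of finite rank and your torsion-free-rank bound coincides with the minimal-number-of-generators reading of ``rank''.
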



\section{Kernel of the canonical retraction onto a vertex group}\label{S:kernel}
Let $\ga$ be a simplicial graph and let $G=\graphx\mathfrak{G}$ be a graph products of a family of groups $\G=\{G_v \mid v \in V\graphx\}$, with respect to $\ga$.
In this section we will use the Normal Form Theorem (Theorem \ref{thm:norm_form}) to prove that for any $a \in V\graphx$, the kernel
$\ker \rho_{\{a\}}$ is itself a graph product.
Let $V=V\graphx$, $A=\{a\}$, $B=\link_\graphx(a)$ and $C=V-(A\cup B).$ Then $V$ is the disjoint union of $A,B,$ and $C.$

For every $g \in G_A$ take a copy $\graphx^g_{B\cup C}$ of $\graphx_{B\cup C}$, then construct the graph $\graphz$ by gluing together the graphs  $\{\graphx^g_{B\cup C}\}_{g \in G_A}$ along the
corresponding copies of $\graphx^g_B$. More precisely, the vertex set of $\graphz$ is the set of equivalence classes of pairs $\{[(g,u)] \mid g \in G_A \mbox{ and } u \in {B\cup C}\}$,
where two such pairs $(g,u)$ and $(h,v)$ are equivalent $(g,u)\sim (h,v)$, if and only if $u=v \in B$. Two vertices $s,t \in V\graphz$ are connected by an
edge if and only if there are $g \in G_A$ and $u,v \in B\cup C$ such that $s=[(g,u)]$, $t=[(g,v)]$ and $\{u,v\} \in E\graphx$. Note that if $G_A$ is infinite and $C \neq \emptyset$, then the new graph $\graphz$
will have infinitely many vertices.

\begin{Ex}\label{ex:1}
Let $\graphx$ be the simple path of length $2$.  That is, $\graphx$ has three vertices $\{v_1,v_2,v_3\}$ and two edges $\{\{v_1,v_2\},\{v_2,v_3\}\}.$
Suppose that $G_{v_1}= \mathbb{Z}/3\mathbb{Z}=\{\bar 0,\bar 1, \bar 2\}$.
For $a=v_1$ we have $A=\{v_1\}$, $B=\{v_2\}$ and $C=\{v_3\}.$ Figure~\ref{fig:1} below illustrates the construction of the graph $\graphz$ in this case.

\begin{figure}[ht]
\setlength{\unitlength}{4144sp}%
\begingroup\makeatletter\ifx\SetFigFont\undefined%
\gdef\SetFigFont#1#2#3#4#5{%
  \reset@font\fontsize{#1}{#2pt}%
  \fontfamily{#3}\fontseries{#4}\fontshape{#5}%
  \selectfont}%
\fi\endgroup%
\begin{picture}(5167,2023)(249,-1259)
\put(4276,-151){\makebox(0,0)[b]{\smash{{\SetFigFont{10}{12.0}{\rmdefault}{\mddefault}{\updefault}{\color[rgb]{0,0,0}$[(\overline{0},v_2)]$}%
}}}}
{\color[rgb]{0,0,0}\thicklines
\put(1027, 73){\circle*{90}}
}%
{\color[rgb]{0,0,0}\put(1748, 73){\circle*{90}}
}%
{\color[rgb]{0,0,0}\put(2739,-557){\circle*{90}}
}%
{\color[rgb]{0,0,0}\put(2739, 73){\circle*{90}}
}%
{\color[rgb]{0,0,0}\put(2737,703){\circle*{90}}
}%
{\color[rgb]{0,0,0}\put(3459, 73){\circle*{90}}
}%
{\color[rgb]{0,0,0}\put(3459,703){\circle*{90}}
}%
{\color[rgb]{0,0,0}\put(3459,-557){\circle*{90}}
}%
{\color[rgb]{0,0,0}\put(4452, 74){\circle*{90}}
}%
{\color[rgb]{0,0,0}\put(5174, 74){\circle*{90}}
}%
{\color[rgb]{0,0,0}\put(5174,704){\circle*{90}}
}%
{\color[rgb]{0,0,0}\put(5176,-556){\circle*{90}}
}%
{\color[rgb]{0,0,0}\put(3466,704){\line(-1, 0){765}}
}%
{\color[rgb]{0,0,0}\put(3466, 74){\line(-1, 0){765}}
}%
{\color[rgb]{0,0,0}\put(3466,-556){\line(-1, 0){765}}
}%
{\color[rgb]{0,0,0}\put(5221, 74){\line(-1, 0){765}}
}%
{\color[rgb]{0,0,0}\put(1756, 74){\line(-1, 0){765}}
}%
{\color[rgb]{0,0,0}\put(1036, 74){\line(-1, 0){765}}
}%
{\color[rgb]{0,0,0}\put(4456, 74){\line( 6, 5){734.754}}
}%
{\color[rgb]{0,0,0}\put(4436, 63){\line( 6,-5){734.754}}
}%
\put(316,-151){\makebox(0,0)[b]{\smash{{\SetFigFont{10}{12.0}{\rmdefault}{\mddefault}{\updefault}{\color[rgb]{0,0,0}$v_1$}%
}}}}
\put(1036,-151){\makebox(0,0)[b]{\smash{{\SetFigFont{10}{12.0}{\rmdefault}{\mddefault}{\updefault}{\color[rgb]{0,0,0}$v_2$}%
}}}}
\put(1756,-151){\makebox(0,0)[b]{\smash{{\SetFigFont{10}{12.0}{\rmdefault}{\mddefault}{\updefault}{\color[rgb]{0,0,0}$v_3$}%
}}}}
\put(2746,479){\makebox(0,0)[b]{\smash{{\SetFigFont{10}{12.0}{\rmdefault}{\mddefault}{\updefault}{\color[rgb]{0,0,0}$(\overline{2},v_2)$}%
}}}}
\put(3466,479){\makebox(0,0)[b]{\smash{{\SetFigFont{10}{12.0}{\rmdefault}{\mddefault}{\updefault}{\color[rgb]{0,0,0}$(\overline{2},v_3)$}%
}}}}
\put(3466,-781){\makebox(0,0)[b]{\smash{{\SetFigFont{10}{12.0}{\rmdefault}{\mddefault}{\updefault}{\color[rgb]{0,0,0}$(\overline{0},v_3)$}%
}}}}
\put(2746,-151){\makebox(0,0)[b]{\smash{{\SetFigFont{10}{12.0}{\rmdefault}{\mddefault}{\updefault}{\color[rgb]{0,0,0}$(\overline{1},v_2)$}%
}}}}
\put(2746,-781){\makebox(0,0)[b]{\smash{{\SetFigFont{10}{12.0}{\rmdefault}{\mddefault}{\updefault}{\color[rgb]{0,0,0}$(\overline{0},v_2)$}%
}}}}
\put(3466,-151){\makebox(0,0)[b]{\smash{{\SetFigFont{10}{12.0}{\rmdefault}{\mddefault}{\updefault}{\color[rgb]{0,0,0}$(\overline{1},v_3)$}%
}}}}
\put(3061,-1186){\makebox(0,0)[b]{\smash{{\SetFigFont{12}{14.4}{\rmdefault}{\mddefault}{\updefault}{\color[rgb]{0,0,0}(b) $\Gamma_{B\cup C}^{\overline{0}}\cup \Gamma_{B\cup C}^{\overline{1}} \cup \Gamma_{B\cup C}^{\overline{2}} $}%
}}}}
\put(991,-1186){\makebox(0,0)[b]{\smash{{\SetFigFont{12}{14.4}{\rmdefault}{\mddefault}{\updefault}{\color[rgb]{0,0,0}(a) $\Gamma$}%
}}}}
\put(4861,-1186){\makebox(0,0)[b]{\smash{{\SetFigFont{12}{14.4}{\rmdefault}{\mddefault}{\updefault}{\color[rgb]{0,0,0}(c) $\Delta$}%
}}}}
\put(5401,479){\makebox(0,0)[b]{\smash{{\SetFigFont{10}{12.0}{\rmdefault}{\mddefault}{\updefault}{\color[rgb]{0,0,0}$[(\overline{2},v_3)]$}%
}}}}
\put(5401,-151){\makebox(0,0)[b]{\smash{{\SetFigFont{10}{12.0}{\rmdefault}{\mddefault}{\updefault}{\color[rgb]{0,0,0}$[(\overline{1},v_3)]$}%
}}}}
\put(5401,-781){\makebox(0,0)[b]{\smash{{\SetFigFont{10}{12.0}{\rmdefault}{\mddefault}{\updefault}{\color[rgb]{0,0,0}$[(\overline{0},v_3)]$}%
}}}}
{\color[rgb]{0,0,0}\put(315, 74){\circle*{90}}
}%
\end{picture}%
\caption{\label{fig:1} Illustration of Example \ref{ex:1}. }
\label{fig:XYZ}
\end{figure}

\end{Ex}

For each $[(g,v)]\in V\graphz,$ let $K_{[(g,v)]}$ be an isomorphic copy of $G_v$ (with a fixed isomorphism), and  for any $k\in K_{[(g,v)]},$ let $\overline{k}$  denote the corresponding element in $G_v$.
Set $\mathfrak{K}=\{K_t \mid t \in V\graphz \}$ and let $\graphz\mathfrak{K}$ be the corresponding graph product.
Define a map $\phi_0\colon \bigcup_{t \in V\graphz}{K_t} \to \graphx\mathfrak{G}$ by $\phi_0(k)=g\overline{k}g^{-1}$ whenever $k\in K_{[(g,v)]}.$ Note that this is well defined, since if $(g,v)\sim (h,u)$
then $u=v\in B$ and $g\overline{k}g^{-1}=\overline{k}=h\overline{k}h^{-1}.$ Clearly $\phi_0$ can be uniquely extended to a homomorphism
$\phi\colon \graphz \mathfrak{K} \to \graphx \mathfrak{G}$, because any defining relation of $\graphz \mathfrak{K}$ is mapped to a relation of $\graphx \mathfrak{G}$.

\begin{thm}\label{thm:kernel}
Suppose that $G\coloneqq \graphx\mathfrak{G}$ is a graph product of groups as above and $a\in V\graphx$ is a vertex of $\graphx$.
Then there is a group isomorphism $\psi\colon \ker \rho_{\{a\}} \to \graphz \mathfrak{K}$, where the graph $\graphz$ and the family of groups $\mathfrak{K}$ are defined above.
In particular, every vertex group $K_t$, $t \in V\graphz$, is isomorphic to some $G_v$, $v \in V\graphx$.

Moreover, for every $g \in \ker \rho_{\{a\}}$ we have
\begin{equation}\label{eq:comp} \text{ $|\psi(g)|_\graphz \leq|g|_\graphx$, and if $a \in \supp_\graphx(g)$
 then $|\psi(g)|_\graphz \leq|g|_\graphx-2$. }
\end{equation}
\end{thm}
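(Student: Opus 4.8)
The plan is to construct an explicit inverse $\psi$ to the homomorphism $\phi$ by ``untwisting'' reduced words of $\graphx\mathfrak G$ syllable by syllable, and to read the length bounds straight off this construction. Write $V=A\sqcup B\sqcup C$ as in the setup and recall that $\rho_{\{a\}}\colon G\to G_a$ kills every syllable lying in a vertex group $G_u$ with $u\neq a$, so that for any word $W\equiv(g_1,\dots,g_n)$ (with $g_i\in G_{v_i}$) one has $\rho_{\{a\}}(g_1\cdots g_n)=\prod_{i:\,v_i=a}g_i$, the product of the $a$-syllables in order. For such a word put $\pi_i\coloneqq\rho_{\{a\}}(g_1\cdots g_i)\in G_a$ with $\pi_0\coloneqq 1$. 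The first observation is that $\phi$ surjects onto $\ker\rho_{\{a\}}$: if $g\in\ker\rho_{\{a\}}$ is represented by a reduced word $(g_1,\dots,g_n)$, then the semidirect-product decomposition $G=\ker\rho_{\{a\}}\rtimes G_a$ yields the cocycle identity $g=\prod_{i:\,v_i\neq a}\pi_{i-1}\,g_i\,\pi_{i-1}^{-1}$, and each factor $\pi_{i-1}g_i\pi_{i-1}^{-1}$ is precisely $\phi_0$ of the element of $K_{[(\pi_{i-1},v_i)]}$ corresponding to $g_i$.

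This same formula suggests the definition of $\psi$. For an arbitrary word $W\equiv(g_1,\dots,g_n)$, define $\Psi(W)\in\graphz\mathfrak K$ to be the product $\prod_{i:\,v_i\neq a}\kappa_i$, where $\kappa_i\in K_{[(\pi_{i-1},v_i)]}$ is the element with $\overline{\kappa_i}=g_i$. I would then prove that $\Psi$ is invariant under each of the transformations (T1), (T2), (T3) and their inverses. Invariance under (T1) and (T2) is immediate: deleting a trivial syllable, or amalgamating two consecutive syllables of a common vertex group, either leaves all the $a$-syllables and all partial products $\pi_j$ untouched, or replaces $\kappa_i\kappa_{i+1}$ by the single element of the same group $K_{[(\pi_{i-1},v)]}$ corresponding to $g_ig_{i+1}$, using that $K_{[(\pi_{i-1},v)]}\cong G_v$ is a group isomorphism. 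By the Normal Form Theorem any two words representing the same element of $G$ are joined by a finite chain of transformations (T1)--(T3), so this invariance yields a well-defined map $\psi\colon\ker\rho_{\{a\}}\to\graphz\mathfrak K$, with $\psi(g)\coloneqq\Psi(W)$ for any word $W$ representing $g$.

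The heart of the argument is invariance under the shuffle (T3), which interchanges consecutive syllables $g_i\in G_u$, $g_{i+1}\in G_{u'}$ with $\{u,u'\}\in E\graphx$. If $u,u'\neq a$ then $\pi_{i-1}=\pi_i$, so $\kappa_i$ and $\kappa_{i+1}$ lie in $K_{[(\pi_{i-1},u)]}$ and $K_{[(\pi_{i-1},u')]}$; since these two vertices of $\graphz$ admit the common first coordinate $\pi_{i-1}$ and $\{u,u'\}\in E\graphx$, they are joined by an edge of $\graphz$, whence $\kappa_i$ and $\kappa_{i+1}$ commute and the swap is harmless. If instead $u=a$, then $u'\in\link_\graphx(a)=B$; the $a$-syllable $g_i$ contributes nothing to $\Psi$, while $g_{i+1}$ sits in $K_{[(\pi_{i-1}g_i,u')]}$ before the swap and in $K_{[(\pi_{i-1},u')]}$ afterwards, and these coincide because $[(g,u')]=[(1,u')]$ for every $g\in G_a$ when $u'\in B$. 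This last point, where the bookkeeping of the partial $a$-products meets the gluing identification over $B$, is the main obstacle, and it is exactly what forces the vertices over $B$ to be amalgamated in the construction of $\graphz$.

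Granting invariance, the rest is formal. Concatenating reduced words for $g,h\in\ker\rho_{\{a\}}$ and using $\rho_{\{a\}}(g)=1$ shows that the partial products governing the $h$-part are unshifted, so that $\Psi(W_gW_h)=\Psi(W_g)\Psi(W_h)$ and hence $\psi$ is a homomorphism; the cocycle identity above says $\phi\circ\psi=\mathrm{id}$ on $\ker\rho_{\{a\}}$; and a one-syllable computation gives $\psi(\phi(k))=k$ for every generator $k\in K_t$, so $\psi\circ\phi=\mathrm{id}$ on all of $\graphz\mathfrak K$ because $\psi$ is a homomorphism. Thus $\psi$ and $\phi$ are mutually inverse isomorphisms, and each $K_t$ is by construction isomorphic to some $G_v$. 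Finally, for a reduced word $W$ of $g$ the word defining $\psi(g)=\Psi(W)$ has exactly as many syllables as $g$ has non-$a$ syllables, giving $|\psi(g)|_\graphz\le|g|_\graphx$; and if $a\in\supp_\graphx(g)$ then $W$ contains at least one $a$-syllable, its $a$-syllables multiply to $\rho_{\{a\}}(g)=1$, and no syllable of a reduced word is trivial, so there must in fact be at least two of them, whence $|\psi(g)|_\graphz\le|g|_\graphx-2$, establishing \eqref{eq:comp}.
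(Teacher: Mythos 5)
Your proof is correct, and it takes a genuinely different route through the key step. The paper stays on the $\phi$ side throughout: injectivity is proved by taking a reduced word $(k_1,\dots,k_n)$ of $\graphz\mathfrak{K}$, choosing the conjugators $g_i\in G_{\{a\}}$ recursively ($g_1=1$, and $g_i=g_{i-1}$ whenever $\overline{k}_i$ lies over $B$) so that no two $G_{\{a\}}$-syllables of the image word can be joined, and then using the Normal Form Theorem in $\graphx\mathfrak{G}$ to derive a contradiction with the reducedness of $(k_1,\dots,k_n)$; surjectivity onto $\ker\rho_{\{a\}}$ comes from the rewriting $W'\equiv(S_0,S_1^{h_1},\dots,S_n^{h_1\cdots h_n})$, which is exactly your telescoped cocycle identity $g=\prod_{i:\,v_i\neq a}\pi_{i-1}g_i\pi_{i-1}^{-1}$. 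You instead define the candidate inverse $\Psi$ directly on words and replace the paper's injectivity argument by a well-definedness argument in the style of van der Waerden's trick, namely invariance of $\Psi$ under (T1)--(T3) and their inverses; injectivity of $\phi$ then comes for free, since $\psi$ is a two-sided inverse. Pleasingly, the two arguments hit the same pressure point: your (T3) case $u=a$, where $[(\pi_{i-1}g_i,u')]=[(\pi_{i-1},u')]$ because $u'\in B=\link_\graphx(a)$, is precisely the gluing over $B$ that makes $\phi_0$ well defined and that powers Case 1 of the paper's injectivity analysis. What your route buys is the elimination of the delicate choice of coset representatives and the reduced-word case analysis, traded for routine invariance checks; what the paper's route buys is never having to verify well-definedness of a map defined on words. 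Two small points of care for a write-up: record the symmetric (T3) case $u'=a$ explicitly, and note that the Normal Form Theorem as stated connects two \emph{reduced} words by shuffles only, so a chain between arbitrary words passes through reductions and genuinely needs the invariance under inverse transformations that you claim. Your surjectivity identity and your derivation of \eqref{eq:comp} (a reduced word for $g$ with $a\in\supp_\graphx(g)$ has at least two $G_{\{a\}}$-syllables, all nontrivial with product $\rho_{\{a\}}(g)=1$) coincide in substance with the paper's.
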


\begin{proof}
Let $\phi\colon \graphz \mathfrak{K} \to \graphx \mathfrak{G}$ be the homomorphism defined above. We will show that $\phi$ is injective and its image is $\ker\rho_{\{a\}}$. We will then
take $\psi$ to be the inverse of $\phi$.

\noindent\textbf{Injectivity of $\phi$}

Let $W\equiv(k_1,\dots, k_n)$  be a reduced word representing a non trivial element of $\graphz \mathfrak{K}$.
For any given $i \in \{1,\dots,n\}$, we know that $\phi(k_i)=g_i \overline{k}_i g_i^{-1}$ for some $g_i \in G_A$. Suppose that $k_i \in K_{[(f_i,v_i)]}$ for some $v_i \in B \cup C$. If $v_i \in C$
then $g_i=f_i$ is uniquely determined by the equivalence class $[(f_i,v_i)]$. Otherwise, if $v_i \in B$ (this is equivalent to $\overline{k}_i \in G_{\link_\graphx(a)}$),
we have freedom to (recursively) choose $g_i \in G_A$ as follows: if $i=1$ we take $g_1=1$, and if $i>1$ we take $g_i=g_{i-1}$.

Then $\phi$ maps the element of $\graphz \mathfrak{K}$, represented by $W$, to the element of $\graphx \mathfrak{G}$, represented by the word
$W_1\equiv(g_1,\overline{k}_1,g^{-1}_1g_2,\overline{k}_2,g^{-1}_2g_3,\dots, g_{n-1}^{-1}g_n,\overline{k}_n,g^{-1}_n)$.
Arguing by contradiction, suppose that $W_1=1$ in $\graphx \mathfrak{G}$.
Let $W_2$ be the word obtained from $W_1$ by removing all the trivial $G_A$-syllables.
Since $W_2=1$ and $n \ge 1$, it follows that two syllables of $W_2$ can be joined together. However, from the choice of $g_i$'s it is clear that no two (non-trivial) $G_A$-syllables of $W_2$
can be joined together (because if $\overline{k}_i \in G_B$ and $i>1$ then $g_{i-1}^{-1}g_i=1$).
Therefore there must exist integers $i,j$, $1 \le i<j \le n$, such that $\overline{k}_i$ can be joined with $\overline{k}_j$ in $W_2$. There are two cases to consider.

\textit{Case 1.} $\overline{k}_i,\overline{k}_j \in G_v$ for some $v \in B$. Since these two syllables can be joined together in $W_2$,
we have $\overline{k}_m \in G_{\link_\graphx (v)}$ for each $m \in \{i+1,\dots,j-1\}$. Hence $k_i,k_j \in K_t$, where $t=[(1,v)] \in V\graphz$, and for every
$m \in \{i+1,\dots,j-1\}$ one has $k_m \in K_{\link_\graphz (t)}$. In other words, the syllables $k_i$ and $k_j$ can be joined together in $W$, contradicting the assumption that $W$ is reduced.

\textit{Case 2.} $\overline{k}_i,\overline{k}_j \in G_v$ for some $v \in C$. Since $A \cap \link_\graphx(v) =\emptyset$ we can conclude that for all $l$, $i \le l \le j-1$, $g_l=g_{l+1}$ in $G_A$, and
$\overline{k}_m \in G_{\link_\graphx (v)}$ for every $m \in \{i+1,\dots,j-1\}$.
Therefore $k_i,k_j \in K_t$, where $t=[(g_i,v)] \in V\graphz$, and for every $m \in \{i+1,\dots,j-1\}$ one has $k_m \in K_{\link_\graphz (t)}$. This again yields a contradiction with the assumption that $W$ was reduced.

Thus $W_1\neq 1$ in $\graphx\mathfrak{G}$, implying that $\phi$ is injective.

\noindent\textbf{The image of $\phi$ is $\ker \rho_A$}

First notice that each generator of $ \graphz \mathfrak{K}$ is mapped under $\phi$ to an element of $\ker \rho_A$. Therefore $\text{Im} \phi \subseteq \ker \rho_A.$

On the other hand, any element $g\in \ker \rho_A$ can be represented by a word
$W \equiv (S_0,h_1,S_1,\dots, h_n,S_n)$, where $n \ge 0$, every $S_i$ is a subword without $G_A$-syllables and each $h_j \in G_A-\{1\}$.
As $g\in \ker \rho_A$ we see that $\rho_A(g)=h_1h_2\cdots h_{n}=1$ in $G_A$.
For an element $h \in G_A$ and a word $S \equiv (s_1, \dots,s_m)$, with $s_i \in G_{u_i}$ for some $u_i \in B\cup C$, $i=1,\dots,m$,
we define $S^h$ to be the word $(h,s_1,h^{-1},h,s_2,h^{-1},\dots,h,s_m,h^{-1})$.
Observe that $S^h$ represents the $\phi$-image of the word $(k_1,\dots, k_m)$, where $k_i \in K_{[(h,u_i)]}$ and $\overline{k}_i=s_i$ for all $i$,
and the length of this word is the same as the length of $S$.

Note that in $\graphx \mathfrak G$ the element $g$ is also represented by the word
$$
W'\equiv (S_0,S_1^{h_1},S_2^{h_1h_2},\dots, S_n^{h_1h_2 \cdots h_{n}}).
$$
Evidently $W'$ is the $\phi$-image of some word,
representing an element of $ \graphz \mathfrak{K}$. Hence $\phi$ is surjective.

\noindent \textbf{Rewriting in $ \graphz \mathfrak{K}$}

Let $\psi\colon \ker \rho_A \to \graphz \mathfrak{K}$ be the inverse of $\phi$. From the previous construction of $W'$ it is clear that
$|\psi(g)|_\graphz \le |g|_\graphx-n$. And if $a \in \supp(g)$, then $n \ge 2$ (as $h_1h_2\cdots h_n=1$ but $h_i \neq 1$ for $i=1,\dots,n$), proving the last claim of the theorem.
\end{proof}

\begin{df} Let $\ga$ be a simplicial graph  and let $G=\GG$ be the graph product of a family of groups $\G$ with respect to $\ga$. Given any subset $X \subseteq G$, such that
the parabolic closure of $X$ exists, the \textit{essential support} $\esupp_\ga(X)$, of $X$, is the minimal subset $S \subseteq V\ga$ such that $\pc_\ga(X)=gG_Sg^{-1}$ for some $g \in G$.
\end{df}

\begin{rem}\label{rem:es_supp} If $\pc_\ga(X)=gG_{T}g^{-1}$ for some $T \subseteq V\ga$ then $\pc_\ga(X)=gG_{T'}g^{-1}$, where $T':=T-\{t \in T \mid G_t=\{1\}\}$,
and, according to Corollary \ref{cor:non-tr_parab_in_other}, $\esupp_\ga(X)=T'$. In particular for every $s \in \esupp_\ga(X)$, $G_s \neq \{1\}$,
hence $\esupp_\ga(X) \subseteq \supp_\ga(X)$ by Corollary \ref{cor:non-tr_parab_in_other}.
\end{rem}

\begin{rem}\label{rem:es_supp-prop_parab} For any finite subset $X \subseteq G$, the essential supports $\esupp_\ga(X)$ and $\esupp_\ga(\gen{X})$
are well-defined by Proposition \ref{prop:min_parab},
and $\esupp_\ga(\gen{X})=\esupp_\ga(X)$ is a finite subset of $V\ga$.
Moreover, $\esupp_\ga(X)=V\ga$ if and only if  $X$ is not contained in a proper parabolic subgroup of $G$.
\end{rem}

\begin{lemma}\label{lem:proj_on_esupp} Let $H$ be a subgroup of $G=\GG$ such that $E:=\esupp_\ga(H)\subseteq V\ga$ is well-defined
(e.g., if $\ga$ is finite or if $H$ is finitely generated).
Then the canonical projection
$\rho_E:G \to G_E$ is injective on $H$ and $\esupp_{\ga_E}(\rho_E(H))=E$, i.e., $\rho_E(H)$ is not contained in a proper parabolic subgroup of $G_E$,
where $G_E$ is equipped with the natural graph product decomposition with respect to $\ga_E$.
\end{lemma}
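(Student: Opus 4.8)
The plan is to describe the retraction $\rho_E$ explicitly on the parabolic closure of $H$ and to read off both assertions from that description. Write $L\coloneqq\pc_\ga(H)$; this exists by hypothesis, and by the definition of essential support together with Remark~\ref{rem:es_supp} I may fix $g\in G$ with $L=gG_Eg^{-1}$ and $G_e\neq\{1\}$ for every $e\in E$. Since $H\leqslant L$, each $h\in H$ has a unique expression $h=gxg^{-1}$ with $x=g^{-1}hg\in G_E$. Put $c\coloneqq\rho_E(g)\in G_E$. The one computation driving everything is
\[
\rho_E(gxg^{-1})=\rho_E(g)\,\rho_E(x)\,\rho_E(g)^{-1}=cxc^{-1}\quad\text{for all }x\in G_E,
\]
which holds because $\rho_E$ is a homomorphism restricting to the identity on $G_E$ and $x\in G_E$.

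Injectivity is then immediate, and in fact $\rho_E$ is injective on all of $L=gG_Eg^{-1}$: the map $x\mapsto cxc^{-1}$ is a bijection of $G_E$, and $x\mapsto gxg^{-1}$ is a bijection of $G_E$ onto $L$, so $\rho_E|_L$ is the composite of two injections and hence injective; a fortiori $\rho_E|_H$ is injective. Note that this needs no normalization of the conjugator $g$. Setting $H_0\coloneqq g^{-1}Hg\leqslant G_E$, the same computation yields the clean description $\rho_E(H)=cH_0c^{-1}$, a $G_E$-conjugate of $H_0$.

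For the statement about essential supports I would first record that $\esupp_{\ga_E}(\rho_E(H))$ is well-defined: if $\ga$ is finite then so is $\ga_E$, and if $H$ is finitely generated then so is $\rho_E(H)$, so Proposition~\ref{prop:min_parab} applies inside $G_E$ in either case. I would then show that $H_0$ is not contained in any proper parabolic subgroup of $G_E$. Conjugating $L=gG_Eg^{-1}$ by $g^{-1}$ and using that $x\mapsto g^{-1}xg$ is an automorphism of $G$ carrying parabolic subgroups to parabolic subgroups and preserving minimality, I get $\pc_\ga(H_0)=g^{-1}Lg=G_E$. If $H_0$ were contained in a proper parabolic $qG_Pq^{-1}$ of $G_E$ with $P\subsetneqq E$ and $q\in G_E$, then this subgroup is parabolic in $G$ as well (full subgroups of $G_E$ are full subgroups of $G$), so it contains the minimal parabolic $\pc_\ga(H_0)=G_E$; since $G_e\neq\{1\}$ for all $e\in E$, Corollary~\ref{cor:non-tr_parab_in_other} forces $E\subseteq P$, contradicting $P\subsetneqq E$. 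Finally, conjugation by $c\in G_E$ carries proper parabolic subgroups of $G_E$ to proper parabolic subgroups of $G_E$, so $\rho_E(H)=cH_0c^{-1}$ is likewise not contained in any proper parabolic subgroup of $G_E$; by Remark~\ref{rem:es_supp-prop_parab} this is precisely the assertion $\esupp_{\ga_E}(\rho_E(H))=E$.

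The argument is short, so there is no serious obstacle; the one point that must be handled carefully is the bookkeeping between parabolic subgroups of $G_E$ and of $G$. I rely throughout on the fact that a parabolic subgroup of $G_E$ (with respect to $\ga_E$) is again a parabolic subgroup of $G$, which is what lets me transport the global results---minimality of the parabolic closure and Corollary~\ref{cor:non-tr_parab_in_other}---down to the full subgroup $G_E$ while comparing the supports $P\subsetneqq E\subseteq V\ga$.
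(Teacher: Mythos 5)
Your proposal is correct and follows essentially the same route as the paper: the paper's proof likewise observes that $\rho_E$ restricted to $\pc_\ga(H)=gG_Eg^{-1}$ acts as conjugation by $\rho_E(g)g^{-1}$, from which injectivity on $H$ and the non-containment of $\rho_E(H)$ in a proper parabolic subgroup of $G_E$ both follow. The only difference is one of exposition: the paper leaves the second assertion as an ``it follows'' remark, whereas you spell it out via the minimality of the parabolic closure and Corollary~\ref{cor:non-tr_parab_in_other}, which is exactly the intended justification.
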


\begin{proof} By the assumptions, $H \leqslant gG_Eg^{-1}$ for some $g \in G$ and $E$ is the smallest subset of $V\ga$ with this property.
Evidently the restriction of $\rho_E$ to $gG_Eg^{-1}$ acts as conjugation by $\rho_E(g)g^{-1}$. It follows that $\rho_E$ is injective on $H$
and $H$ is not contained in any proper parabolic subgroup of $G_E$.
\end{proof}

Given a graph $\graphz$ and a subset $S\subseteq V\graphz$, we will say that $S$ is \textit{irreducible} if the full subgraph $\graphz_S$ is irreducible. In other words, $S$
is not irreducible if and only if $S=P \sqcup Q$ such that $P,Q \neq\emptyset$ and $Q \subseteq \link_\graphz(P)$.

\begin{prop}\label{prop:irred_subset} Let $\ga$ be a finite irreducible graph with at least two vertices and let $G=\GG$ be the graph product of a family of groups $\G$ with respect to $\ga$.
Consider a finite subset $X \subseteq G$ such that $X$ is not contained in any proper parabolic subgroup of $G$ and $X \subseteq \ker \rho_{\{a\}}$ for some $a \in V\ga$.
Using Theorem \ref{thm:kernel}, the kernel $ \ker \rho_{\{a\}}$ can be identified with the graph product $K:= \graphz \mathfrak{K}$ as above.
Then the essential support $\esupp_\graphz(X)$ is irreducible and $|\esupp_\graphz(X)|\ge 2$.
\end{prop}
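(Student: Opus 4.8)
The plan is to argue everything by contradiction, exploiting the single fact that, under the identification of $\ker\rho_{\{a\}}$ with $K=\graphz\mathfrak{K}$ supplied by Theorem~\ref{thm:kernel}, each single vertex group sits inside a conjugate of a single vertex group of $G$: namely $K_{[(g,v)]}=gG_vg^{-1}$ as a subgroup of $G$, where $g\in G_{\{a\}}$ and $v\in B\cup C=V-\{a\}$ (and for $v\in B$ this is just $G_v$, since $g$ commutes with $G_v$). Write $E\coloneqq\esupp_\graphz(X)$ and fix $k\in K$ with $\pc_\graphz(X)=kK_Ek^{-1}$. Since $K\lhd G$ and conjugation by an element of $G$ sends proper parabolic subgroups of $G$ to proper parabolic subgroups of $G$, after replacing $X$ by $k^{-1}Xk$ (which still lies in $K$ and still is not contained in a proper parabolic subgroup of $G$) I may assume $\pc_\graphz(X)=K_E$, so that $X\subseteq K_E=\gen{K_t\mid t\in E}$ inside $G$. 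The bound $|E|\ge 2$ then comes for free: if $|E|\le 1$ then $K_E$ is trivial or equals some $K_{[(g,v)]}=gG_vg^{-1}$, so $X\subseteq fG_vf^{-1}$ for some $f\in G$ and some $v\in V-\{a\}$; as $|V\graphx|\ge 2$ this is a proper parabolic subgroup of $G$ containing $X$, contradicting the hypothesis.

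For irreducibility I assume, for contradiction, that $E=P\sqcup Q$ with $P,Q\neq\emptyset$ and $Q\subseteq\link_\graphz(P)$. The engine is the description of the edges of $\graphz$ recorded in its construction: distinct vertices $[(g,u)],[(h,w)]$ can only be adjacent when $\{u,w\}\in E\graphx$, and two $C$-vertices $[(g,u)],[(h,w)]$ (with $u,w\in C$) are adjacent precisely when $g=h$ and $\{u,w\}\in E\graphx$. Let $\pi\colon V\graphz\to V-\{a\}$, $[(g,v)]\mapsto v$, be the projection and set $P'\coloneqq\pi(P)$, $Q'\coloneqq\pi(Q)$. Using that every vertex of $Q$ is $\graphz$-adjacent to every vertex of $P$, I would check via these edge rules that $P'\cap Q'=\emptyset$ (a common value $u$ would force either $P\cap Q\neq\emptyset$ or two non-adjacent $C$-vertices) and that every vertex of $P'$ is $\graphx$-adjacent to every vertex of $Q'$, i.e.\ $Q'\subseteq\link_\ga(P')$.

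Next I would dispose of the ``incomplete coverage'' case. Because $K_{[(g,v)]}=gG_vg^{-1}\leqslant G_{\{a,v\}}$, the full subgroup $K_E$ lies in $G_{\{a\}\cup\pi(E)}$; hence if $\pi(E)\subsetneq V-\{a\}$ then $\{a\}\cup\pi(E)\subsetneq V$ and $X\subseteq G_{\{a\}\cup\pi(E)}$ is contained in a proper parabolic subgroup of $G$, a contradiction. So I may assume $\pi(E)=V-\{a\}$, whence $P'\sqcup Q'=V-\{a\}$ and every $v\in B$ contributes the vertex $[(1,v)]$. Now suppose all $C$-vertices of $E$ lie on one side, say $P'\cap C=\emptyset$, so that $P'\subseteq B=\link_\ga(a)$. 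Then $A'\coloneqq\{a\}\cup Q'$ and $B'\coloneqq P'$ give a partition $V=A'\sqcup B'$ into non-empty sets with $B'\subseteq\link_\ga(A')$ (using $P'\subseteq\link_\ga(a)$ together with $Q'\subseteq\link_\ga(P')$), contradicting the irreducibility of $\graphx$; the symmetric subcase $Q'\cap C=\emptyset$ is identical.

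The main obstacle is the remaining subcase, in which both $P$ and $Q$ contain $C$-vertices, since then the easy support bound is not proper and no $\ga$-splitting is visible. Here I would exploit the $C$-vertex edge rule directly: whenever $[(g,u)]\in P$ and $[(h,w)]\in Q$ with $u,w\in C$, their forced adjacency gives $g=h$. Fixing one such $C$-vertex and running this over all the others forces a single common coordinate $g_0\in G_{\{a\}}$ for \emph{every} $C$-vertex of $E$. Combined with $\pi(E)=V-\{a\}$ — so that $E$ consists exactly of $[(1,v)]$ for $v\in B$ and $[(g_0,w)]$ for $w\in C$ — and with the fact that $g_0$ commutes with $G_B$, this yields $K_E=\gen{G_v\ (v\in B),\ g_0G_wg_0^{-1}\ (w\in C)}=g_0\,G_{B\cup C}\,g_0^{-1}=g_0\,G_{V-\{a\}}\,g_0^{-1}$, a proper parabolic subgroup of $G$ containing $X$. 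This final contradiction shows $E$ cannot be reducible, completing the proof.
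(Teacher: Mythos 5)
Your proof is correct and takes essentially the same route as the paper's: the same identification of the vertex groups $K_{[(g,v)]}$ with conjugates $gG_vg^{-1}$, the same two ``proper parabolic containment'' facts (your $\pi(E)=V\graphx-\{a\}$ and single-coordinate arguments are exactly the paper's Facts 2 and 1), the same edge rules of $\graphz$, and the same projection of the alleged splitting $P\sqcup Q$ down to $\graphx$ to contradict irreducibility via $B=\link_\graphx(a)$. The only organizational difference is that the paper first produces two non-adjacent $C$-vertices with distinct $G_{\{a\}}$-coordinates and uses them to force all $C$-vertices onto one side, whereas you case-split and dispatch the two-sided case by forcing a common coordinate $g_0$ — which is the paper's Fact 1 contradiction in disguise.
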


\begin{proof} We will use the same notation ($A$, $B$, $C$, etc.) as in the beginning of the section. Note that  
$ \esupp_\graphz(X)$ is well-defined by Proposition \ref{prop:min_parab}. Let us start with observing two facts.

Fact 1: for any given element $g \in G_A$, $\esupp_\graphz(X)$ cannot be contained in $\{[(g,u)] \mid u \in B\cup C\}$ (otherwise $X \leqslant gG_{B\cup C}g^{-1}$,
which is a proper parabolic subgroup of $G$).
Hence there are $g_1,g_2 \in G_A$ and $u_1,u_2 \in  C$ such that $g_1 \neq g_2$ and $[(g_1,u_1)],[(g_2,u_2)] \in \esupp_\graphz(X)$. It follows that
$[(g_1,u_1)] \neq [(g_2,u_2)]$, and, in particular, $|\esupp_\graphz(X)|\ge 2$.

Fact 2: for every $u \in B \cup C$ there is
$g \in G_A$ such that $[(g,u)] \in \esupp_\graphz(X)$.
Indeed, suppose that there is $u \in B \cup C$ such that  $X \subseteq fK_Sf^{-1}$ for some $f \in K$ and $S \subseteq V\graphz$ with $[(g,u)] \notin S$ for all $g \in G_A$.
It follows, by definition, that $K_S \subseteq \bigcup_{h \in G_A} h G_T h^{-1}$ where $T:=(B\cup C)-\{u\} \subseteq V\ga$.
Then $X \subseteq fG_{T\cup\{a\}}f^{-1}$, which is a proper parabolic subgroup of $G$, contradicting to our assumptions.

Arguing by contradiction, let us assume that $\esupp_\graphz(X)$ is not  irreducible, that is $\esupp_\graphz(X)=P \sqcup Q$ such that $P,Q \subseteq V\graphz$ are non-empty and
$Q\subseteq \link_\graphz(P)$ in $\graphz$.
If $[(f_1,v_1)],[(f_2,v_2)] \in \esupp_\graphz(X)$ are such that $f_1,f_2$ are distinct elements of $G_A$ and $v_1,v_2 \in C$, then these vertices are not adjacent
in $\graphz$, and thus they must both lie either in $P$ or in $Q$.
Without loss of generality we can suppose that the vertex $[(g_1,u_1)]$ from Fact 1 lies in $P$. Then $[(g_2,u_2)]\in P$ and, in view of  Fact 2, for every
$v \in C$ and each $[(f,v)] \in \esupp_\graphz(X)$, $[(f,v)] \in P$ (because such a vertex cannot be adjacent to both $[(g_1,u_1)]$ and $[(g_2,u_2)]$
in $\graphz$). Consequently,
$Q \subseteq \{[(g,u)] \mid g \in G_A, \, u \in B\}= \{[(1,u)] \mid u \in B\}$. Let $\overline{P}$ and $\overline{Q}$ be the projections of $P$ and $Q$ to $B \cup C$, i.e.,
\begin{align*}
{P}&:=\{v \in B \cup C \mid \mbox{there exists } [(f,v)]\in P \mbox{ for some }f \in G_A\}\subseteq B \cup C \mbox{ and }\\
\overline{Q}&:=\{u \in B \mid [(1,u)]\in Q\}\subseteq B.
\end{align*}
It is easy to see that $\overline{P}$ and $\overline{Q}$ are non-empty and disjoint because $P$ and $Q$ were disjoint, and
$\overline{P} \cup \overline{Q}=B \cup C$ by Fact 2.

Since  $Q\subseteq \link_\graphz(P)$, we see that for all $v \in \overline{P}$ and $u \in \overline{Q}$
there is $f \in G_A$ and  such that $[(f,v)]\in P$ is adjacent to
$[(1,u)]=[(f,u)]\in Q$ in $\graphz$, which implies that $v$ is adjacent to  $u$ in $\ga$. Therefore $u \in \link_\ga(v)$ for all $v \in \overline{P}$ and all $u \in \overline{Q}$,
i.e., $\overline{Q} \subseteq \link_\ga(\overline{P})$.
Recalling that $V\ga=B\sqcup C\sqcup\{a\}=\overline{Q}\sqcup (\overline{P}\cup\{a\}) $ and $\overline{Q}\subseteq B=\link_\ga(a)$, we arrive to a
contradiction with irreducibility of $\ga$.

We have shown that $\esupp_\graphz(X)$ is irreducible and has at least two elements (Fact 1), thus the proposition is proved.
\end{proof}

For a finite subset $X \subseteq G= \graphx \mathfrak{G}$, the \emph{length of $X$ with respect to} $\graphx \mathfrak{G}$ will be defined by
$$|X|_\graphx\coloneqq \sum_{x\in X}|x|_\graphx.$$

\begin{prop}\label{P:comp} Suppose that $\ga$ is a simplicial graph (not necessarily finite) and $G=\GG$ is the graph product of a family
of groups $\G=\{G_v \mid v \in V\ga\}$ with respect to $\ga$.
Let $H$ be a subgroup of $\graphx \mathfrak{G}$ and let $X$ be a finite subset of $H.$
Then there exist a finite graph $\graphz$, a graph product  $\graphz \mathfrak{K}$ over $\graphz$,
and a homomorphism $\varphi\colon H \to \graphz \mathfrak{K}$ 
with the following properties:
\begin{itemize}
	\item each vertex group $K_t$, $t \in V\graphz$, is isomorphic to some $G_v$, $v \in V\graphx$;
	\item  $|\varphi(X)|_\graphz\leq |X|_\graphx$;
	\item $\varphi$ is injective on $\gen{X}$;
	\item $\esupp_\graphz(\varphi(X))=V\graphz$;
	\item $\rho_{\{t\}}(\varphi(H))\neq \{1\}$ in $\graphz \mathfrak{K}$ for all $t\in V\graphz$.
\end{itemize}
Moreover, if $\esupp_\ga(X)$ contains at least two elements and is irreducible in $\ga$ then $\graphz$ will be irreducible and
$|V\graphz|\ge 2$. If, additionally,
 $\rho_{\{a\}}(X)=\{1\}$ in $\graphx \mathfrak{G}$ for some $a\in\esupp_\graphx(X)$, then $|\varphi(X)|_\graphz< |X|_\graphx.$
\end{prop}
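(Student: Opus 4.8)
The plan is to prove the proposition by induction on the integer $|X|_\graphx$, combining two tools. The first is the essential-support projection of Lemma~\ref{lem:proj_on_esupp}, which lets me replace $G$ by a graph product over a \emph{finite} graph on which $X$ is no longer contained in any proper parabolic subgroup. The second is the kernel decomposition of Theorem~\ref{thm:kernel}, which identifies the kernel of a retraction onto a vertex group with a new graph product and, crucially, records length control through~\eqref{eq:comp}. Throughout I track the current image of the pair $(H,X)$ inside a graph product whose vertex groups are all isomorphic to some $G_v$, and I descend into such kernels whenever this is needed to force the last of the five bulleted properties.

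First I would set $E\coloneqq\esupp_\graphx(X)=\esupp_\graphx(\gen{X})$, a finite subset of $V\graphx$ by Proposition~\ref{prop:min_parab} and Remark~\ref{rem:es_supp-prop_parab}. Applying Lemma~\ref{lem:proj_on_esupp} to $\gen{X}$, the canonical projection $\rho_E\colon G\to G_E$ is injective on $\gen{X}$, satisfies $\esupp_{\graphx_E}(\rho_E(X))=V\graphx_E$, and, deleting syllables, obeys $|\rho_E(X)|_{\graphx_E}\le|X|_\graphx$. If $\rho_{\{t\}}(\rho_E(H))\neq\{1\}$ for \emph{every} $t\in V\graphx_E$, then $\graphz\coloneqq\graphx_E$ and $\varphi\coloneqq\rho_E$ already satisfy all five properties, which is the base of the induction. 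Otherwise there is a vertex $a$ with $\rho_E(H)\subseteq\ker\rho_{\{a\}}$; here I invoke Theorem~\ref{thm:kernel} to identify $\ker\rho_{\{a\}}$ with a graph product $\graphz_1\mathfrak{K}_1$ via an isomorphism $\psi$, whose vertex groups are again copies of the $G_v$. Since $a\in\esupp_{\graphx_E}(X)\subseteq\supp_{\graphx_E}(X)$ by Remark~\ref{rem:es_supp}, some element of $X$ has $a$ in its support, so the sharp clause of~\eqref{eq:comp} yields $|\psi(\rho_E(X))|_{\graphz_1}\le|\rho_E(X)|_{\graphx_E}-2<|X|_\graphx$. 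As $\psi(\rho_E(X))$ is finite, I may apply the induction hypothesis to $\bigl(\graphz_1\mathfrak{K}_1,\psi(\rho_E(H)),\psi(\rho_E(X))\bigr)$, whose $X$-length is strictly smaller, and compose the resulting homomorphism with $\psi\circ\rho_E$ to obtain $\varphi$. Injectivity on $\gen{X}$ survives because $\rho_E$ is injective there and $\psi$ is an isomorphism, and properties 1, 2, 4, 5 pass through the composition; the recursion terminates since $|X|_\graphx$ drops at every descent.

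For the refinements I would carry the extra hypotheses along the same recursion. If $\esupp_\graphx(X)$ is irreducible with at least two vertices, then $\graphx_E$ is finite irreducible with $|V\graphx_E|\ge2$, and each descent meets the hypotheses of Proposition~\ref{prop:irred_subset} (finite irreducible graph, $X$ not in a proper parabolic, $X\subseteq\ker\rho_{\{a\}}$); hence the new essential support $\esupp_{\graphz_1}(\psi(\rho_E(X)))$ is again irreducible with at least two elements, and by induction the final $\graphz$ is irreducible with $|V\graphz|\ge2$. For the strict inequality, the additional hypothesis $\rho_{\{a\}}(X)=\{1\}$ at a vertex $a\in\esupp_\graphx(X)$ guarantees that the image of $H$ lands in $\ker\rho_{\{a\}}$, so the construction performs at least one descent; as $a\in\supp(X)$, the sharp form of~\eqref{eq:comp} gives a drop of at least $2$ at that step, and since no subsequent step increases length, $|\varphi(X)|_\graphz<|X|_\graphx$.

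The main obstacle is the bookkeeping that couples $H$ and $X$: a descent at $a$ through Theorem~\ref{thm:kernel} is legitimate only when the \emph{entire} current image of $H$—not merely of $X$—lies in $\ker\rho_{\{a\}}$, so I must descend exactly at the vertices that kill the image of $H$ (this is what secures the fifth property), whereas it is the support of $X$ that drives the length drop in~\eqref{eq:comp}. Reconciling these two roles is the delicate point, and it is precisely here that the strict-inequality clause must be read with the image of $H$ (not just $X$) contained in $\ker\rho_{\{a\}}$. Once this is arranged, the remaining verifications—re-running the essential-support projection inside each recursive call to keep $\graphz$ finite and to restore $\esupp_\graphz(\varphi(X))=V\graphz$, and confirming that injectivity on $\gen{X}$ persists through the whole tower $\varphi''\circ\psi\circ\rho_E$—are routine, and the irreducibility and strict-length refinements then follow cleanly from Proposition~\ref{prop:irred_subset} and~\eqref{eq:comp}.
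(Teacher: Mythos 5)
Your proposal is correct and is essentially the paper's own argument: the paper iterates exactly your two steps --- projection onto $G_E$ with $E=\esupp_\ga(X)$ via Lemma~\ref{lem:proj_on_esupp}, followed, whenever some vertex $a$ satisfies $\rho_{\{a\}}(\rho_E(H))=\{1\}$, by the kernel descent of Theorem~\ref{thm:kernel} --- with termination guaranteed by the strict drop in $|X|_\ga$ from \eqref{eq:comp} (using $a\in\esupp_{\ga_E}(\rho_E(X))\subseteq\supp_{\ga_E}(\rho_E(X))$, Remark~\ref{rem:es_supp}) and irreducibility maintained through Proposition~\ref{prop:irred_subset}, your induction on $|X|_\ga$ being just a reformulation of the paper's ``repeat finitely many times.'' Your closing caveat is also accurate and mirrors the paper itself: $\rho_{\{a\}}(X)=\{1\}$ forces $\gen{X}$, but not $\rho_E(H)$, into $\ker\rho_{\{a\}}$, and the paper's procedure likewise descends only at vertices killing the image of all of $H$, so the strict-inequality clause is obtained under that reading (which holds automatically when $H=\gen{X}$, since then $\rho_{\{a\}}(H)=\gen{\rho_{\{a\}}(X)}=\{1\}$).
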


\begin{proof} The required graph product can be constructed by applying the following two-step procedure several times:

Step 1. Let $\Theta$ be the full subgraph $\ga_E$ of $\ga$, where $E:=\esupp_\ga(X) \subseteq V\ga$. Since $|X|<\infty$ one sees that $E$ and the graph $\Theta$
are both finite (cf. Remarks \ref{rem:es_supp} and \ref{rem:es_supp-prop_parab}).
Note that the canonical retraction $\rho_E: G \to G_E$
is length-reducing: $|X|_\ga \ge |\rho_E(X)|_\Theta$, where the latter length is measured in the full subgroup
$G_E$ of $G$, with $G_E$ being considered as a graph product of the family $\{G_v \mid v \in E\}$ with respect to $\Theta$.
Moreover, $\rho_E$ is injective on $\gen{X}$ and $\esupp_\Theta(\rho_E(X))=\esupp_\Theta(\rho_E(\gen{X}))=V\Theta=E$ by Lemma \ref{lem:proj_on_esupp}.
Set $H_1:=\rho_E(H) \leqslant G_E$ and $X_1:=\rho_E(X) \subseteq H_1$.

Now, if for all $a\in E=V\Theta$, $\rho_{\{a\}}(H_1)\neq \{1\}$  in $G_E$ then $G_E$ is the required graph product $\graphz\mathfrak{K}$ where
$\varphi:H \to G_E$ is the restriction of $\rho_E$ to $H$. Otherwise, proceed to step 2.

Step 2.
Suppose that there is $a\in V\Theta=\esupp_\Theta(X_1)$ such that $\rho_{\{a\}}(H_1)=\{1\}$  in $G_E$, i.e., $H_1 \subseteq \ker \rho_{\{a\}}$.
Then, by Theorem~\ref{thm:kernel}, $\ker \rho_{\{a\}}$ is a graph product $\graphz \mathfrak{K}$ of groups, each of which is isomorphic to some $G_v$, $v \in V\graphx$.
By our assumptions and Remark \ref{rem:es_supp}, there is some $x_0\in X_1$ such that $a\in \supp_\Theta(x_0)$.
Hence, by Theorem~\ref{thm:kernel}.\eqref{eq:comp}, $|x_0|_\graphz<|x_0|_\Theta$ and  $|x|_\graphz \le |x|_\Theta$ for all $x \in X_1-\{x_0\}$. Consequently
$|X_1|_\graphz<  |X_1|_\Theta\le |X|_\ga.$ 
We also have that $H_1 \leqslant \graphz \mathfrak{K}$ and if $E=\esupp_\ga(X)$ was irreducible in $\ga$ and contained at least two elements, then
$\Theta=\ga_E$ is an irreducible graph with at least two vertices, hence $\esupp_\graphz(X_1)$ will be irreducible in $\graphz$ and
$|\esupp_\graphz(X_1)|\ge 2$ by Proposition \ref{prop:irred_subset}.

Since the non-negative integer $|X|_\ga$ is being decreased with each application of Step~2,
after repeating this two-step procedure finitely many times we will obtain a graph product with all the desired properties.
\end{proof}

The following fact (cf.  \cite[Cor. 3.28]{Green}) can be deduced immediately from the above proposition:

\begin{cor}\label{cor:torfree} Let $p$ be a prime number. If each vertex group $G_v$, $v \in V\ga$, has no $p$-torsion then the graph product $G=\GG$ has no $p$-torsion.
Thus if each vertex group is torsion-free then $G$ is torsion-free.
\end{cor}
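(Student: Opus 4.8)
The plan is to show that $G$ has no element of order $p$; this suffices, because any non-trivial element of $p$-power order has a power of order exactly $p$, and the closing assertion about torsion-freeness then follows since a non-trivial torsion element always has a power of prime order. I would argue by contradiction through a minimal counterexample measured by the length function $|\cdot|_\ga$. As every element of a graph product has finite support, I may assume at the outset that $\ga$ is finite (replacing $\ga$ by $\ga_{\supp_\ga(g)}$). So suppose there is a graph product $\GG$ over a finite graph, all of whose vertex groups are $p$-torsion-free, that contains an element $g$ of order $p$; among all such data $(\ga,\G,g)$ I fix one for which $|g|_\ga$ is minimal. First I would reduce to the case $\esupp_\ga(\{g\})=V\ga$: putting $E\coloneqq\esupp_\ga(\{g\})$, Lemma~\ref{lem:proj_on_esupp} shows $\rho_E$ is injective on $\gen{g}$, so $\rho_E(g)$ again has order $p$, while $\rho_E$ cannot increase length. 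By minimality no length is lost, whence $\supp_\ga(g)=E$, and replacing $\ga$ by $\ga_E$ I may assume that $g$ lies in no proper parabolic subgroup of $G$; in particular $V\ga\neq\emptyset$.

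The decisive observation is that $p$-torsion-freeness of the vertex groups forces every vertex projection of $g$ to vanish: for each $a\in V\ga$ the element $\rho_{\{a\}}(g)\in G_a$ satisfies $\rho_{\{a\}}(g)^p=1$, so it has order $1$ or $p$, and as $G_a$ has no $p$-torsion we must have $\rho_{\{a\}}(g)=1$. Two cases then settle quickly. If $|V\ga|=1$, say $V\ga=\{a\}$, then $g=\rho_{\{a\}}(g)=1$, a contradiction. If $\ga$ is reducible, then $G=G_A\times G_B$ with $A,B\neq\emptyset$ and $B=\link_\ga(A)$; since $g$ lies in no proper parabolic, both $\rho_A(g)$ and $\rho_B(g)$ are non-trivial, hence of order $p$, and because $g$ has a syllable in $B$ we get $|\rho_A(g)|_{\ga_A}<|g|_\ga$. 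Thus $\rho_A(g)$ is an order-$p$ element of the graph product $G_A$ over the smaller graph $\ga_A$ (whose vertex groups are again $p$-torsion-free), of strictly smaller length, contradicting minimality.

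What remains is the irreducible case with $|V\ga|\ge 2$, and this is exactly the configuration Proposition~\ref{P:comp} is designed for; here the argument becomes immediate. Since $\esupp_\ga(\{g\})=V\ga$ is now irreducible with at least two elements, and since $\rho_{\{a\}}(g)=1$ for a chosen $a\in V\ga=\esupp_\ga(\{g\})$, the hypotheses of the concluding ``if additionally'' clause of Proposition~\ref{P:comp}, applied to $H\coloneqq\gen{g}$ and $X\coloneqq\{g\}$, are met. It therefore yields a finite graph $\graphz$, a graph product $\graphz\mathfrak{K}$ whose vertex groups $K_t$ are each isomorphic to some $G_v$ (and so $p$-torsion-free), and a homomorphism $\varphi\colon H\to\graphz\mathfrak{K}$ that is injective on $\gen{g}$ and satisfies $|\varphi(g)|_\graphz<|g|_\ga$. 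Then $\varphi(g)$ is an element of order $p$ of strictly smaller length, contradicting minimality and finishing the argument. I expect the main obstacle to lie in this irreducible step, namely producing a strictly shorter counterexample; but it resolves once one sees that $p$-torsion-freeness of the vertex groups is precisely what forces $\rho_{\{a\}}(g)=1$, the extra hypothesis needed to trigger the strict length drop in Proposition~\ref{P:comp}. The surrounding bookkeeping --- reducing to finite $\ga$, to $\esupp_\ga(\{g\})=V\ga$, and the reducible splitting --- is routine.
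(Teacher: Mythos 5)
Your proof is correct, and each step checks out: the reduction to a finite graph and then to $\esupp_\ga(\{g\})=V\ga$ via Lemma~\ref{lem:proj_on_esupp}, the key observation that absence of $p$-torsion in a vertex group $G_a$ forces $\rho_{\{a\}}(g)=1$, the strict drop $|\rho_A(g)|_{\ga_A}<|g|_\ga$ in the reducible case (legitimate because $\supp_\ga(g)=V\ga$ after your reductions), and the appeal to the concluding length-decrease clause of Proposition~\ref{P:comp} in the irreducible case. However, your route is genuinely different from, and considerably longer than, the paper's. The paper proves Corollary~\ref{cor:torfree} in two lines by applying Proposition~\ref{P:comp} once, to $H=\gen{g}$ and $X=\{g\}$, and using the one bullet point you never invoke: $\rho_{\{t\}}(\varphi(H))\neq\{1\}$ for every $t\in V\graphz$. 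Since $\varphi$ is injective on $\gen{g}$, the graph $\graphz$ is non-empty, so composing $\varphi$ with a vertex retraction gives a homomorphism $\alpha\colon H\to G_v$ with $\alpha(H)\neq\{1\}$; as $H\cong\Z/p\Z$ is simple, $\alpha$ is injective and $G_v$ contains $p$-torsion --- a contradiction obtained with no induction, no case analysis, and no length estimates. Your minimal-counterexample argument in effect re-implements, at the level of the corollary, the iterative two-step procedure hidden inside the proof of Proposition~\ref{P:comp}: its Step~2 is triggered precisely by the vanishing of a vertex projection, which is the same phenomenon your key observation isolates, and your reducible and one-vertex cases are absorbed there by the passage to the essential support. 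What your version buys is that it relies only on the injectivity and length-decrease clauses of the proposition (so it would survive even if the final bullet were dropped from its statement); what the paper's version buys is brevity and the cleaner conceptual point that a simple group mapping non-trivially somewhere must embed there. Note, finally, that your own observation already contains the short proof: applied inside $\graphz\mathfrak{K}$ to the order-$p$ element $\varphi(g)$, it says $\rho_{\{t\}}(\varphi(\gen{g}))=\{1\}$ for every $t\in V\graphz$ whenever the $K_t$ are $p$-torsion-free, which directly contradicts the final bullet of Proposition~\ref{P:comp} --- and that is exactly the contradiction the paper draws.
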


\begin{proof} If $G$ contains an element $g$, of order $p$, then take $H\coloneqq \gen{g}$ and $X\coloneqq \{g\}$. Applying Proposition~\ref{P:comp}, we will find $v\in V\ga$ a homomorphism
$\alpha\colon  H \to G_v$ such that $\alpha(H) \neq \{1\}$. Since $H$ is a cyclic group of prime order, the latter implies that $\alpha$ is injective, hence $\alpha(g) \in G_v$ has order $p$.
\end{proof}



\begin{df}
Let (P) be a property of groups (preserved by group isomorphisms) and let $\mathcal{I}$ be a collection of cardinals. We will say that a group is $\mathcal{I}$-\textit{locally} (P), if every
non-trivial $\mathcal{I}$-generated subgroup has (P).
\end{df}

In this work we will mainly be interested in the properties of being indicable and profi.
A group $G$ is said to be \emph{indicable} if it has an infinite cyclic quotient;
$G$ will be called \emph{profi} if it possesses a proper finite index subgroup. Clearly every indicable group is profi, but not vice-versa.

Note that if a group $G$ maps onto an indicable (resp. profi) group, then $G$ itself is indicable (resp. profi).
More generally, we shall say that (P) is a \textit{lifting} property  provided the following holds:
if $G$ is a group possessing a {non-trivial} quotient $Q$ with property (P) then $G$ itself has (P).
Thus the properties of indicability and profi are lifting properties.

\begin{prop}\label{prop:lift}
Suppose that $\mathcal{I}$ is a collection of cardinals, $\graphx$ is a simplicial graph,
$G=\GG$ is the graph product of a family of groups $\G=\{G_v \mid v \in V\ga\}$ with respect to $\ga$.
Let  {\normalfont(P)} be a lifting property of groups. If every vertex group $G_v$, $v \in V\ga$, is  $\mathcal{I}$-locally {\normalfont(P)} then the graph product $G$ is also
$\mathcal{I}$-locally {\normalfont(P)}.
\end{prop}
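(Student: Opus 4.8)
The plan is to verify the defining condition of $\mathcal{I}$-local {\normalfont(P)} directly: I take an arbitrary non-trivial $\mathcal{I}$-generated subgroup $H \leqslant G$ and exhibit a non-trivial quotient of $H$ enjoying {\normalfont(P)}; since {\normalfont(P)} is a lifting property, this forces $H$ itself to have {\normalfont(P)}. The entire argument rests on Proposition~\ref{P:comp}, which transfers the situation into a \emph{finite} graph product whose vertex groups are copies of the $G_v$ and onto each of whose vertex groups $H$ projects non-trivially.

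First I would fix a non-trivial element $g \in H$ and set $X \coloneqq \{g\}$, a finite subset of $H$. Applying Proposition~\ref{P:comp} to $H$ and $X$, I obtain a finite graph $\graphz$, a graph product $\graphz\mathfrak{K}$ with every vertex group $K_t$ isomorphic to some $G_v$, and a homomorphism $\varphi\colon H \to \graphz\mathfrak{K}$ which is injective on $\gen{X}$ and satisfies $\rho_{\{t\}}(\varphi(H)) \neq \{1\}$ for every $t \in V\graphz$. Since $\varphi$ is injective on $\gen{g}$ and $g \neq 1$, we have $\varphi(g) \neq 1$; were $V\graphz$ empty, the group $\graphz\mathfrak{K}$ would be trivial and $\varphi(g)=1$, a contradiction. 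Hence there exists at least one vertex $t \in V\graphz$.

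Next I would pick any such $t$ and consider the composite homomorphism $\pi \coloneqq \rho_{\{t\}} \circ \varphi \colon H \to K_t$. Its image $\pi(H) = \rho_{\{t\}}(\varphi(H))$ is non-trivial by the last property of Proposition~\ref{P:comp}, and it is $\mathcal{I}$-generated, being a homomorphic image of the $\mathcal{I}$-generated group $H$ (recall that $\mathcal{I}$-generation passes to homomorphic images). As $K_t \cong G_v$ for some $v \in V\graphx$ and $G_v$ is $\mathcal{I}$-locally {\normalfont(P)} by hypothesis, the non-trivial $\mathcal{I}$-generated subgroup $\pi(H) \leqslant K_t$ must have {\normalfont(P)}. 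Thus $\pi(H) \cong H/\ker\pi$ is a non-trivial quotient of $H$ with property {\normalfont(P)}, and the lifting property of {\normalfont(P)} yields that $H$ has {\normalfont(P)}, as required.

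Because Proposition~\ref{P:comp} performs all of the structural work, no step here is genuinely difficult; the proof is essentially an assembly of its conclusions. The only points demanding care are the verification that $V\graphz \neq \emptyset$, so that a target vertex group actually exists (this is where injectivity of $\varphi$ on $\gen{g}$ is used), and the observation that $\mathcal{I}$-generation survives both the passage to $\varphi(H)$ and the further retraction onto $K_t$. Finally, if $\mathcal{I}$ contains no non-zero cardinal the statement holds vacuously, as then no non-trivial subgroup of $G$ is $\mathcal{I}$-generated.
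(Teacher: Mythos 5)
Your proposal is correct and is essentially identical to the paper's proof: both pick a single non-trivial element $h\in H$, apply Proposition~\ref{P:comp} with $X=\{h\}$ to get $\varphi\colon H\to\graphz\mathfrak{K}$ with $\varphi(h)\neq 1$ (whence $V\graphz\neq\emptyset$), and then use the non-trivial $\mathcal{I}$-generated image $(\rho_{\{t\}}\circ\varphi)(H)$ in a vertex group $K_t\cong G_v$ together with the lifting property. Your extra remarks (that $\mathcal{I}$-generation passes to homomorphic images, and the vacuous case) are correct points the paper leaves implicit.
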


\begin{proof} Let $H$ be a non-trivial $\mathcal{I}$-generated subgroup of $G$. Pick any $h \in H-\{1\}$ and set $X\coloneqq \{h\}$.
By Proposition~\ref{P:comp} we can find the graph product  $\graphz \mathfrak{K}$ and a homomorphism $\varphi\colon  H \to \graphz \mathfrak{K}$ enjoying all of its claims.
In particular $\varphi(h)\neq 1$, hence $\graphz$ has at least one vertex $t \in V\graphz$. Since $\rho_{\{t\}}(\varphi(H)) \neq \{1\}$ in $K_t$ and $K_t$ is $\mathcal I$-locally (P)
(as it is isomorphic to a vertex group of the original graph product), we can conclude that  $(\rho_{\{t\}}\circ\varphi)(H)$ has (P), and so $H$ has (P) because this property is lifting.
\end{proof}

Proposition \ref{prop:lift} has the following immediate corollary, which will later be used in the proofs of Theorems B and C:

\begin{cor}\label{cor:loc_ind-profi}
Let $\mathcal{I}$ be a collection of cardinals. Then the graph product of groups is $\mathcal{I}$-locally indicable (resp.  $\mathcal{I}$-locally profi)
if and only if each vertex group is $\mathcal{I}$-locally indicable (resp.  $\mathcal{I}$-locally profi).
\end{cor}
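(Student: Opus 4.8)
The plan is to prove both stated equivalences at once, since indicability and being profi were already observed to be lifting properties in the paragraph preceding Proposition~\ref{prop:lift}.

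First I would dispose of the ``if'' direction by a direct appeal to Proposition~\ref{prop:lift}. Letting \textbf{(P)} denote either indicability or the profi property, the assumption that every vertex group $G_v$, $v \in V\ga$, is $\mathcal{I}$-locally \textbf{(P)} feeds straight into the hypotheses of Proposition~\ref{prop:lift}, whose conclusion is precisely that $G=\GG$ is $\mathcal{I}$-locally \textbf{(P)}. This requires no additional argument, and it simultaneously covers both the indicable and the profi cases.

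For the converse ``only if'' direction I would use that each vertex group embeds into $G$ as a full (indeed retract) subgroup. By the Normal Form Theorem each $G_v$ is canonically identified with the full subgroup $G_{\{v\}} \leqslant G$. Hence any $\mathcal{I}$-generated subgroup $H \leqslant G_v$ is, with the very same generating set, an $\mathcal{I}$-generated subgroup of $G$. Assuming $G$ is $\mathcal{I}$-locally \textbf{(P)}, every non-trivial $\mathcal{I}$-generated subgroup of $G$---and in particular every such subgroup of $G_v$---has \textbf{(P)}; therefore $G_v$ is $\mathcal{I}$-locally \textbf{(P)}, as required.

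The point to get right, rather than a genuine obstacle, is the small bookkeeping that a generating set witnessing $\mathcal{I}$-generation of $H$ inside $G_v$ continues to witness it inside $G$; this is immediate from the definition of $\mathcal{I}$-generation, since the containment $G_v \leqslant G$ does not enlarge any generating set. All the substantive content has already been carried out in Proposition~\ref{prop:lift} (which itself rests on Proposition~\ref{P:comp}), so the corollary follows formally once one combines that proposition with the trivial embedding observation above.
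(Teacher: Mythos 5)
Your proposal is correct and matches the paper's intent exactly: the paper states this corollary as an ``immediate'' consequence of Proposition~\ref{prop:lift}, which is precisely your ``if'' direction, and your ``only if'' direction is the same trivial observation that each $G_v$ sits inside $G$ as a (retract) subgroup, so $\mathcal{I}$-generated subgroups of $G_v$ are $\mathcal{I}$-generated subgroups of $G$. Nothing further is needed.
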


A group $G$ is \emph{right orderable} if there a total order invariant under the right action of $G$ on itself by multiplication.
Clearly, subgroups of right orderable groups are right orderable.  By \cite[Thm.~2]{Burns-Hale},  a group is right orderable if and only if every non-trivial finitely generated
subgroup maps onto a non-trivial right orderable group. Using this fact together with an argument, similar the one applied in Proposition~\ref{prop:lift}, one can achieve the following statement:

\begin{cor}\label{cor:orderable} A graph product of right orderable groups is right orderable.
\end{cor}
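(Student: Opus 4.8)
The plan is to verify the criterion of Burns and Hale \cite{Burns-Hale} quoted just above: it suffices to show that every non-trivial finitely generated subgroup $H \leqslant G$ admits an epimorphism onto a non-trivial right orderable group. The point to emphasize at the outset is that right orderability is \emph{not} a lifting property in the sense defined before Proposition~\ref{prop:lift} — a group possessing a non-trivial right orderable quotient need not itself be right orderable (the kernel could contain torsion) — so the direct lifting argument of Proposition~\ref{prop:lift} does not apply, and it is precisely the Burns--Hale characterization that repairs this by reducing right orderability to the existence of such epimorphisms from all finitely generated subgroups.

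So fix a non-trivial finitely generated subgroup $H \leqslant G$ and choose some $h \in H - \{1\}$. Setting $X \coloneqq \{h\}$, I would apply Proposition~\ref{P:comp} to obtain a finite graph $\graphz$, a graph product $\graphz \mathfrak{K}$ whose vertex groups $K_t$ are each isomorphic to some vertex group $G_v$ of $G$, and a homomorphism $\varphi \colon H \to \graphz \mathfrak{K}$ that is injective on $\gen{X} = \gen{h}$ and satisfies $\rho_{\{t\}}(\varphi(H)) \neq \{1\}$ for every $t \in V\graphz$. Since $\varphi$ is injective on $\gen{h}$, we have $\varphi(h) \neq 1$, so $\graphz \mathfrak{K}$ is non-trivial and $\graphz$ has at least one vertex $t$.

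The composition $\rho_{\{t\}} \circ \varphi \colon H \to K_t$ then maps $H$ onto the subgroup $(\rho_{\{t\}} \circ \varphi)(H) = \rho_{\{t\}}(\varphi(H))$ of $K_t$, which is non-trivial by the last property of Proposition~\ref{P:comp}. As $K_t$ is isomorphic to a vertex group of $G$, it is right orderable by hypothesis, and right orderability passes to subgroups; hence $(\rho_{\{t\}} \circ \varphi)(H)$ is a non-trivial right orderable group onto which $H$ surjects. By the Burns--Hale criterion, $G$ is right orderable. The main obstacle here is purely conceptual rather than technical: recognizing that right orderability fails to be a lifting property and therefore invoking Burns--Hale to convert the global statement into the local one about finitely generated subgroups; once that reduction is in place, Proposition~\ref{P:comp} delivers exactly the required epimorphism onto (a subgroup of) a single vertex group, and the argument runs in parallel to the proof of Proposition~\ref{prop:lift}.
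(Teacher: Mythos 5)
Your proposal is correct and follows essentially the same route as the paper, which explicitly says the corollary follows from the Burns--Hale criterion \cite[Thm.~2]{Burns-Hale} combined with an argument parallel to Proposition~\ref{prop:lift}, i.e., applying Proposition~\ref{P:comp} with $X=\{h\}$ to surject a non-trivial finitely generated subgroup onto a non-trivial subgroup of a vertex group. Your remark that right orderability is not a lifting property, which is exactly why Burns--Hale is needed in place of Proposition~\ref{prop:lift} itself, correctly identifies the one conceptual point of the argument.
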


A different proof of Corollary \ref{cor:orderable} has been independently obtained by I. Chiswell in \cite{Chiswell}, where  it is also proved that a graph product of bi-orderable groups is bi-orderable.



\section{Proofs of Theorems B and C}\label{S:proof}
\begin{lemma}
\label{L:subgraph_irred}
Let $\ga$ be a non-empty finite irreducible graph. Then there is a vertex $v \in V=V\ga$ such that the full subgraph $\ga_A$, of $\ga$, is again irreducible, where $A\coloneqq V-\{v\}$.
\end{lemma}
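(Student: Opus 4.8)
The plan is to translate the problem into one about the complement graph $\ga'$ and then invoke the elementary fact that a finite connected graph possesses a vertex whose deletion preserves connectedness. Recall from the excerpt that $\ga$ is irreducible if and only if $\ga'$ is connected. The first point I would record is that taking complements commutes with passing to full subgraphs: for $A \subseteq V$ and distinct $u,w \in A$, the pair $\{u,w\}$ is an edge of $(\ga_A)'$ precisely when it is not an edge of $\ga_A$, equivalently when it is not an edge of $\ga$ (as $\ga_A$ is a full subgraph), equivalently when it is an edge of $(\ga')_A$. Hence $(\ga_A)' = (\ga')_A$, and so $\ga_A$ is irreducible if and only if $(\ga')_A$ is connected. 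The lemma is therefore equivalent to the assertion that the finite connected graph $\ga'$ admits a vertex $v$ such that $(\ga')_{V-\{v\}}$ remains connected.

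For this reduced statement I would first dispose of the degenerate case $|V| = 1$: removing the unique vertex yields the full subgraph on the empty vertex set, which is vacuously irreducible since $\emptyset$ cannot be split into two non-empty subsets. When $|V| \ge 2$, irreducibility of $\ga$ gives that $\ga'$ is connected with at least two vertices. I would then choose a spanning tree $T$ of $\ga'$; since $T$ has at least two vertices it has a leaf $v$. Deleting $v$ from $T$ leaves a tree $T - v$ which is connected and spans $V - \{v\}$, and, as $T - v$ is a subgraph of $(\ga')_{V-\{v\}}$, the latter graph is connected as well. Setting $A \coloneqq V - \{v\}$, the equivalence established in the first paragraph then shows that $\ga_A$ is irreducible, as required.

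There is no serious obstacle here: the entire content lies in recognizing that the complement-graph reformulation turns the claim into the standard graph-theoretic fact that every finite connected graph has a non-cut vertex (indeed the leaves of any spanning tree supply such vertices). The only points that demand a little care are the verification that the complement operation commutes with taking full subgraphs and the treatment of the one-vertex base case, both of which are routine.
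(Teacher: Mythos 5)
Your proof is correct and follows essentially the same route as the paper: both pass to the complement graph $\ga'$, use the equivalence ``$\ga$ irreducible $\Leftrightarrow$ $\ga'$ connected,'' and reduce to the fact that a non-empty finite connected graph has a non-cut vertex. The paper states that fact without proof, whereas you supply the standard spanning-tree-leaf justification and the routine checks (complement commutes with full subgraphs, the one-vertex case), which are fine.
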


\begin{proof}
Recall that the graph $\ga$ is irreducible if and only if the complement graph $\ga'$ is connected.
It is not difficult to see that $\ga'$, as any non-empty finite connected graph, has a vertex $v$ that is not a cut vertex.
Then the full subgraph of $\ga'$ spanned by  $A\coloneqq V-\{v\}$ (obtained from $\ga'$ by removing $v$ and all the edges adjacent to it),
is connected. Thus $\ga_A$ is  irreducible.
\end{proof}

Our methods for proving of Theorems B and C are similar, and so we are going to prove both of them in parallel.

\begin{lemma}\label{lem:H_large_mapsonto}
Suppose that $\mathcal{I}$ is a collection of cardinals and $G$ is a group splitting as a free amalgamated product $G=G_1*_{G_3}G_2$, where $G_j \leqslant G$, $j=1,2,3$.
Let $H \leqslant G$ be an $\mathcal{I}$-generated subgroup such that $H$ is not virtually cyclic, $H$ is not contained in a conjugate of $G_j$, $j=1,2$, in $G$,
and $H \cap gG_3g^{-1}=\{1\}$ for all $g \in G$. Then the following are true.
\begin{enumerate}
 \item[(i)] If $\mathcal I$ is ample and $G$ is $\mathcal I$-locally profi, then $H$ is large.
 \item[(ii)] If $G$ is $\mathcal{I}^-$-locally indicable, then $H$ maps onto \ff.
\end{enumerate}
\end{lemma}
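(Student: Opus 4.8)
The plan is to let $H$ act on the Bass--Serre tree $\mathcal T$ associated to the amalgam $G=G_1*_{G_3}G_2$ and to read off a free product decomposition of $H$ from this action. Since the edge stabilizers of the $G$--action on $\mathcal T$ are the conjugates of $G_3$, the hypothesis $H\cap gG_3g^{-1}=\{1\}$ for all $g\in G$ says precisely that $H$ acts with trivial edge stabilizers (and without inversions, as $G$ does). The vertex stabilizers of the $G$--action are the conjugates of $G_1$ and $G_2$, so the assumption that $H$ is not contained in any such conjugate means that $H$ fixes no vertex of $\mathcal T$. By Bass--Serre theory, a group acting without inversions and with trivial edge stabilizers splits as a free product $H\cong\big(\ast_{i\in\Lambda}H_i\big)*F(Y)$, where the $H_i=H\cap g_iG_{j_i}g_i^{-1}$ are the nontrivial vertex stabilizers and $F(Y)$ is free. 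Writing $k=|\Lambda|$ and $r=\rank F(Y)$, the facts that $H$ fixes no vertex and is not virtually cyclic force $k+r\ge 2$ and exclude $H\cong\D_\infty=\Z/2\Z*\Z/2\Z$. Finally, each $H_i$ is a retract of $H$, hence a quotient of $H$, hence $\mathcal I$--generated.

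For part (i) I would first upgrade each factor. Each $H_i$ is a nontrivial $\mathcal I$--generated subgroup of $G$, so it is profi. Since $\mathcal I$ is ample, Lemma~\ref{lem:card_of_fi} shows that every finite index subgroup of $H_i$ is again $\mathcal I$--generated, and hence profi; this already forces $H_i$ to be infinite (a finite group has a subgroup of prime order, which is not profi) and, starting from an index--two subgroup (which is again infinite and profi) and descending once more, it produces a normal finite index subgroup $N\lhd H_i$ with $|H_i/N|\ge 3$. Thus each $H_i$ admits a finite quotient of order at least $3$, while each $\Z$--factor of $F(Y)$ surjects onto $\Z/3\Z$. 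Choosing two of the $k+r\ge 2$ factors I can therefore map $H$ onto a free product $\bar A*\bar B$ of two nontrivial finite groups which is not $\Z/2\Z*\Z/2\Z$. Such a free product is virtually free of rank at least $2$, and hence large; as largeness passes from a quotient back to the group (pull back a finite index subgroup surjecting onto \ff), I conclude that $H$ is large.

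For part (ii) I would show that the free factors I need are indicable. Fix a factor, say $A=H_1$, and write $H=A*C$ with $C\neq\{1\}$ (possible since $k+r\ge 2$). If the generating cardinal $\lambda\in\mathcal I$ of $H$ is infinite, then $A$, being a quotient of $H$, is $\le\lambda$--generated, and $\lambda+1=\lambda\in\mathcal I$ shows $A$ is $\mathcal I^-$--generated; if $\lambda$ is finite, then $H$ is finitely generated and Grushko's theorem gives $\rank A\le \rank H-\rank C\le\lambda-1$ (using $\rank C\ge 1$), so $A$ is $\mathcal I^-$--generated because $(\lambda-1)+1=\lambda\in\mathcal I$. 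As $A$ is a nontrivial $\mathcal I^-$--generated subgroup of the $\mathcal I^-$--locally indicable group $G$, it maps onto $\Z$. I then build a surjection $H\twoheadrightarrow\Z*\Z=\mathbb F_2$: if $r\ge 2$ I project onto $F(Y)$ and then onto $\mathbb F_2$; if $r=1$ I send $H_1$ onto $\Z$, kill all other vertex factors, and keep the free $\Z$; and if $r=0$ (so $k\ge 2$) I send $H_1$ and $H_2$ each onto $\Z$ and kill the remaining factors.

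The engine of the argument is the tree action, which is routine; the delicate points are the bookkeeping that the free factors inherit the hypotheses. In part (i) the hard part is to guarantee finite quotients of order at least $3$ rather than only $\Z/2\Z$ --- this is exactly where the full strength of ``$\mathcal I$--locally profi'' (applied to finite index subgroups via ampleness and Lemma~\ref{lem:card_of_fi}) is needed, since $\Z/2\Z*\Z/2\Z$ is not large. In part (ii) the corresponding subtlety is the rank count securing $\mathcal I^-$--generation of a free factor, where the case of an infinite generating cardinal is immediate but the finite case rests on Grushko's theorem.
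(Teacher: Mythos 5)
Your skeleton is the paper's: the action on the Bass--Serre tree of $G_1*_{G_3}G_2$ with trivial edge stabilizers yields the Kurosh-type decomposition $H\cong\left(\mbox{\Large $*$}_{i}H_i\right)*F$ (the paper cites \cite[I.5.5, Thm. 14]{Serre} for exactly this), the factors are $\mathcal I$-generated as retracts, and your part (ii) --- Grushko plus cardinal arithmetic to get $\mathcal{I}^-$-generation of the factors, then mapping two indicable factors onto $\Z*\Z$ --- is correct and matches the paper's argument. But part (i) contains a genuine error. You assert that a group of prime order is not profi, and from this deduce that every factor $H_i$ is infinite and admits a finite quotient of order at least $3$. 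In the paper's terminology, profi means possessing a \emph{proper} finite-index subgroup, and the trivial subgroup of $\Z/p\Z$ is proper of index $p$; so $\Z/p\Z$ \emph{is} profi, the $H_i$ may perfectly well be finite, and $H_i\cong\Z/2\Z$ --- which has no quotient of order $\ge 3$ at all --- is a live possibility (vertex groups may have $2$-torsion; the paper's own proof devotes its last case precisely to $|H_{i_l}|=2$). Consequently your recipe ``choose two factors and map onto $\bar A*\bar B\not\cong\Z/2\Z*\Z/2\Z$'' breaks down whenever $F=\{1\}$ and the available factors are copies of $\Z/2\Z$: with $k\ge 3$ your two chosen quotients may both be $\Z/2\Z$, and with $k=2$ your claimed order-$\ge 3$ quotient need not exist for the factor that is $\Z/2\Z$.

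The damage is local, and your own descent mechanism is the right tool, but it must be deployed as in the paper. Your step ``index-two subgroup $K\leqslant H_i$ is again $\mathcal I$-generated by Lemma \ref{lem:card_of_fi}, hence profi, so descending gives a normal subgroup of $H_i$ with quotient of order $\ge 3$'' is valid precisely when $K\neq\{1\}$, i.e. when $H_i\not\cong\Z/2\Z$; the justification is not that $K$ is infinite (it need not be) but that it is nontrivial. The correct case analysis is then: if $F\cong\Z$, a single nontrivial finite quotient $H_{i_0}/N_0$ already gives $H\twoheadrightarrow H_{i_0}/N_0*\Z$, which is virtually free and not virtually cyclic, hence large (here $\Z/2\Z*\Z$ is fine, so no upgrading is needed); if $F=\{1\}$ and $|I|\ge 3$, map onto a free product of \emph{three} nontrivial finite groups, which is large even when all three are $\Z/2\Z$; and if $F=\{1\}$ and $|I|=2$, the exclusion of $H\cong\D_\infty$ (which you correctly observed) forces some factor to satisfy $H_{i_0}\not\cong\Z/2\Z$, and your descent applied to that one factor supplies the order-$\ge 3$ quotient making $\bar A*\bar B$ large. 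As written, however, the blanket claims ``each $H_i$ is infinite'' and ``each $H_i$ admits a finite quotient of order at least $3$'' are false, so part (i) of your proof does not stand without this repair.
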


\begin{proof} By a generalization of Kurosh's subgroup theorem (see \cite[I.5.5, Thm. 14]{Serre}),
$ H=\left(\mbox{\Large $*$}_{i \in I} H_i\right) * F$ where $F$ is a free group, $I$ is a set of indices (possibly empty),
and for each $i \in I,$ there are $g_i \in G$ and $j=j(i) \in \{1,2\} $ such that $H_i=H \cap g_iG_jg_i^{-1}\neq \{1\}$.
Since $H$ is $\mathcal{I}$-generated, $F$ and every $H_i$ are also $\mathcal I$-generated, as retracts of $H$.

If the rank of $F$ is at least $2$, then $H$ maps onto \ff{} and both (i) and (ii) hold. Thus we can further assume that $F$ is either trivial or is isomorphic to $\Z.$

(i) If $F\cong \Z$, since $H$ is not cyclic we have $I \neq \emptyset$, thus there exists some $i_0\in I$. Since $G$ is $\mathcal I$-locally profi,
$H_{i_0}$ possesses a normal subgroup $N_0$ of finite index $n\geq 2$.
Then $H$ has an epimorphism onto the virtually free group $F_1\coloneqq H_{i_0}/N_0*F$. The above conditions on
$n$ and $F$ imply that $F_1$ contains a \nafs{}, and so it is not virtually cyclic. Thus $F_1$ is large and, hence, so is $H$.

In the case when $F=\{1\}$, $I$ must have at least two elements because $H$ is not contained in a conjugate of $G_j$, $j=1,2$, by assumptions.
If $|I|\ge 3$, choose three distinct indices $i_0, i_1, i_2 \in I$. Since $G$ is $\mathcal I$-locally profi, each $H_{i_l}$ possesses a normal subgroup $N_l$ of
finite index $n_l \ge 2$, $l=0,1,2$.
Then $H$ maps onto the free product $H_{i_0}/N_{0}*H_{i_1}/N_{1}*H_{i_2}/N_{2}$, of three non-trivial finite subgroups, and hence it is large.

If $I= \{i_0,i_1\}$ and $|H_{i_l}|=2$ for $l=0,1$, then $H\cong \D_{\infty},$ contradicting to $H$ not being virtually cyclic.
Thus without loss of generality we can assume that $|H_{i_0}|\ge 3$. Again, since $G$ is $\mathcal I$-locally profi, $H_{i_l}$ possesses a
normal subgroup $N_l$ of finite index $n_l \ge 2$ for $l=0,1$. Moreover, if $n_0=2$ then $N_{i_0} \neq \{1\}$ and it is $\mathcal I$-generated by
Lemma \ref{lem:card_of_fi}, hence $N_{i_0}$ is profi and it must contain a proper finite index subgroup. Therefore we can assume that $n_{i_0} \ge 3$.
Then $H$ maps onto the free product $(H_{i_0}/N_0)*(H_{i_1}/N_1)$ of two finite subgroups, of orders $n_0\geq 3$ and $n_1\geq 2$, and so $H$ is large.
This completes the proof for (i).

(ii) If $F\cong \Z$, since $H$ is not cyclic there exists $i_0\in I$.
By the Grushko-Neumann Theorem  \cite[I.10.6]{Dicks-Dunwoody}, $H_{i_0}$ is $\mathcal{I}^-$-generated because $H$ is $\mathcal{I}$-generated.
Hence $H_{i_0}$ maps onto $\Z$ as $G$ is $\mathcal{I}^-$-locally indicable.
It follows that $H$ maps onto $\Z*F$ which is isomorphic to \ff.

If $F=\{1\}$, the assumptions imply that $|I|\ge 2$, so we can find distinct $i_0,i_1\in I$. Since both $H_{i_0}$ and $H_{i_1}$
are non-trivial and $H$ is $\mathcal{I}$-generated, the Grushko-Neumann Theorem implies that these free factors are $\mathcal{I}^-$-generated.
The $\mathcal{I}^-$-local indicability of $G$ yields that $H_{i_0}$ and $H_{i_1}$ each map onto $\Z$, hence $H$ maps onto $\Z*\Z \cong \mathbb{F}_2$, as required.
\end{proof}

For the rest of this section $\graphx$ will be a finite simplicial graph, $\mathfrak{G}=\{G_v \mid v \in V\graphx\}$ will be a family of
groups and $G=\graphx\mathfrak{G}$ will be the corresponding graph product.
Before proving the main results, we are going to establish two sufficient criteria for a subgroup of a graph product to be large or to map onto \ff.

The following elementary observation will be useful:
\begin{rem}\label{rem:non-tr_proj} If $H \leqslant G$ non-trivially projects to every vertex group then $H$ is not contained in any proper parabolic subgroup of $G$.
\end{rem}

\begin{thm}\label{thm:preBC}
Let $\mathcal{I}$ be a collection of cardinals and let  $\ga$ be a finite  irreducible graph with $|V\ga| \ge 2$. Suppose that
$H \leqslant G=\GG$ is an $\mathcal{I}$-generated subgroup such that $H$ is not virtually cyclic and $\rho_{\{v\}}(H)\neq \{1\}$ for every $v\in V\ga.$
\begin{enumerate}
\item[(a)] If $\mathcal I$ is ample and $G_v$ is $\mathcal I$-locally profi for every $v \in V\ga$, then $H$ is large.
\item[(b)] If $G_v$ is $\mathcal{I}^-$-locally indicable for each $v \in V\ga$, then $H$ maps onto \ff.
\end{enumerate}
\end{thm}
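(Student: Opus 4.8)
The plan is to argue by induction on the number of vertices $|V\ga|$, peeling off one vertex at each stage and feeding the resulting amalgamated splitting into Lemma~\ref{lem:H_large_mapsonto}. First I would record a preliminary reduction: the hypothesis $\rho_{\{v\}}(H)\neq\{1\}$ for all $v$ guarantees, via Remark~\ref{rem:non-tr_proj}, that $H$ is contained in no proper parabolic subgroup of $G$. Since that condition, non-virtual-cyclicity, and both conclusions are invariant under conjugation and under passing between $H$ and a finite-index overgroup, I may replace $H$ by a conjugate whenever convenient. For the base case $|V\ga|=2$, irreducibility forces $\ga$ to have no edge, so $G=G_{v_1}*G_{v_2}$ is a free product; taking the amalgamated subgroup to be trivial, the intersection condition of Lemma~\ref{lem:H_large_mapsonto} is automatic, $H$ lies in no conjugate of the (proper parabolic) factors $G_{v_i}$, and $G$ is $\mathcal I$-locally profi / $\mathcal I^-$-locally indicable by Corollary~\ref{cor:loc_ind-profi}, so the lemma applies.

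For the inductive step with $|V\ga|\ge 3$, I would use Lemma~\ref{L:subgraph_irred} to choose a vertex $v$ for which $A\coloneqq V-\{v\}$ spans an irreducible subgraph $\ga_A$, put $C\coloneqq\link_\ga(v)$ and $B\coloneqq\{v\}\cup C$, and use the amalgamated splitting $G=G_A*_{G_C}G_B$. The retraction $\rho_A\colon G\to G_A$ sends $H$ to a subgroup projecting nontrivially onto every vertex group of $G_A$, so $\rho_A(H)$ lies in no proper parabolic of $G_A$ (again by Remark~\ref{rem:non-tr_proj}). If $\rho_A(H)$ is \emph{not} virtually cyclic, then it satisfies the hypotheses of the theorem for the graph product $G_A$ over $\ga_A$ (which has $|A|\ge 2$ vertices and the same vertex groups), so the induction hypothesis makes $\rho_A(H)$ large / maps it onto \ff, and this property pulls back along the surjection $H\twoheadrightarrow\rho_A(H)$ to $H$ itself.

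The remaining case, where $\rho_A(H)$ is virtually cyclic but $H$ is not, is the crux, and the obstacle there is exactly the edge group $G_C$. The key point I would establish is that all intersections $H\cap gG_Cg^{-1}$ are then \emph{finite}. Indeed, irreducibility of $\ga$ forbids $v$ from being adjacent to all of $A$, so $C\subsetneq A$ and hence $\bar g G_C\bar g^{-1}$ (with $\bar g\coloneqq\rho_A(g)$) is a proper parabolic of $G_A$; thus $\rho_A(H)\cap \bar gG_C\bar g^{-1}$ is finite by Lemma~\ref{lem:virt_cyc-parab}. As $\ker\rho_A$ meets every conjugate of $G_C$ trivially, $\rho_A$ is injective on $H\cap gG_Cg^{-1}$, forcing this group to be finite. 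In case (b), $G$ is torsion-free by Corollary~\ref{cor:torfree}, so these finite intersections are trivial and Lemma~\ref{lem:H_large_mapsonto}(ii) applies to $H$ directly. In case (a), I would instead pass to the finite-index normal subgroup $H_1\leqslant H$ that is the preimage of a torsion-free finite-index subgroup of the virtually cyclic group $\rho_A(H)$; then $H_1\cap gG_Cg^{-1}$ injects into a torsion-free group while being finite, hence trivial. One checks that $H_1$ is $\mathcal I$-generated (Lemma~\ref{lem:card_of_fi}, using that $\mathcal I$ is ample), is not virtually cyclic, and lies in no conjugate of $G_A$ or $G_B$: if a nontrivial finite-index normal subgroup were contained in a proper parabolic, then its parabolic closure would be a proper parabolic normalized by $H$, forcing $H$ into a proper parabolic via Lemma~\ref{lem:parab_normal} and Corollary~\ref{cor:norm_in_irred}, a contradiction. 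Lemma~\ref{lem:H_large_mapsonto}(i) then shows $H_1$ is large, and hence so is $H$.

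The step I expect to be the main obstacle is precisely this virtually-cyclic-projection case: forcing the edge-group intersections to be finite and then disposing of them (automatically in the torsion-free setting of (b), and by descending to a finite-index subgroup in the profi setting of (a)). Once that is in place, the rest is routine bookkeeping with parabolic subgroups and with the transfer of local profi/indicability through Corollary~\ref{cor:loc_ind-profi}. Notably, this route goes directly through the projection $\rho_A$ and the normalizer results of Section~\ref{S:parabolic}, so it does not need to invoke the full trichotomy of the Structure Theorem.
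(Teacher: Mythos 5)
Your proposal is correct and follows essentially the same route as the paper's proof: the same induction with Lemma~\ref{L:subgraph_irred}, the same splitting $G=G_A*_{G_C}G_B$ with Lemma~\ref{lem:H_large_mapsonto} as the engine, and the virtually cyclic case dispatched via Lemma~\ref{lem:virt_cyc-parab}, Corollary~\ref{cor:norm_in_irred} and Lemma~\ref{lem:card_of_fi} exactly as in the paper, which merely packages that case as $\rho_A(H)\cong\Z$ or $\rho_A(H)\cong\D_\infty$ via the Structure Theorem where you use finiteness of the edge-group intersections plus torsion-freeness (resp.\ a torsion-free finite-index subgroup) to the same effect. The only detail to make explicit is the paper's opening observation that, since $H$ is not virtually cyclic, $\mathcal I$ must contain a cardinal at least $2$ and hence $\mathcal{I}^-$ a non-zero one --- this is what licenses your implicit claim in case (b) that $\mathcal{I}^-$-local indicability forces the vertex groups, and hence $G$ by Corollary~\ref{cor:torfree}, to be torsion-free.
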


\begin{proof}  If the collection $\mathcal I$ contained no cardinals greater than $1$, we would get a contradiction with our assumptions
because only cyclic subgroups could be $\mathcal{I}$-generated in this case, but $H$ is not virtually cyclic by the assumptions.
Therefore $\mathcal{I}^-$ contains at least one non-zero cardinal, implying that any cyclic subgroup of $G$ is
$\mathcal I^-$-generated. Observe, also, that, by Corollary \ref{cor:loc_ind-profi}, in case (a) $G$ is $\mathcal I$-locally profi and
in case (b) $G$ is $\mathcal I^-$-locally indicable. The proof will proceed by induction on $|V\ga|$.

Base of induction: $|V\ga|=2$. Suppose that $V\ga=\{a,b\}$ and set $A=\{a\}$, $B=\{b\}$. In this case $G=G_A*G_B$ (as $\ga$ is irreducible) and $H$ is not contained in a conjugate of
$G_A$ or $G_B$ by Remark \ref{rem:non-tr_proj}. Therefore all the assumptions of Lemma \ref{lem:H_large_mapsonto} are satisfied, hence
$H$ has the required property.

Step of induction: $|V\ga|\ge 3$ and we can suppose that the required statement has been proved for all $\mathcal{I}$-generated subgroups, of (possibly different)
graph products over finite irreducible graphs with fewer vertices, mapping non-trivially to each vertex group.

By Lemma~\ref{L:subgraph_irred} there is a vertex $v \in V\coloneqq V\ga$ such that the full subgraph $\ga_A$, of $\ga$,
is again irreducible, where $A\coloneqq V-\{v\}$. Let $C=\link_\ga(v) \subset V$ and $B=C\cup\{v\}\subset V$,
then $G$ splits as the amalgamated free product $G_A*_{G_C}G_B$. Note that $|A|\ge 2$ and $C \subsetneqq A$ because  $\ga$ is irreducible,
hence $A$ and $B$ are proper subsets of $V$.

Recall that for the non-empty proper subset $A\subsetneqq V\ga,$ $G_A=\ga_A\G_A$ is a graph product with fewer vertices than $G$,
where $\G_A\coloneqq \{G_v \in \G \mid v \in A\}$, and  $\rho_A(H)$ is $\mathcal{I}$-generated.
Moreover, $\rho_A(H)$ projects non-trivially to each vertex group of $G_A$, because for every $v \in A$ the canonical retraction $\rho_v\colon G \to G_v$
factors through the retraction $\rho_A\colon G\to G_A$ (that is, $\rho_{\{v\}}(H)=(\rho_{\{v\}}\circ\rho_A)(H)$).
Thus if $\rho_A(H)$ is not virtually cyclic in $G_A$,  by induction we know that $\rho_A(H)$ satisfies the required properties
($\rho_A(H)$ is large in case (a) or maps onto \ff{} in  case (b)), and these properties are inherited by $H$.
Hence we can suppose that $\rho_A(H)$ is virtually cyclic. Applying Theorem~\ref{thm:structure} and Remark \ref{rem:non-tr_proj}, we see that
either $\rho_A(H)\cong\Z$ or $\rho_A(H)\cong \D_\infty.$

\textit{Case 1.} If $\rho_A(H) \cong \Z$, then $\rho_A(H)$ has trivial intersection with any proper parabolic subgroup of $G_A$ by Remark \ref{rem:non-tr_proj}
and Lemma  \ref{lem:virt_cyc-parab}.
Take any $g \in G$, set $g_1\coloneqq \rho_A(g) \in G_A$ and, recalling the inclusion $C \subsetneqq A$, observe that
$$gHg^{-1} \cap G_C=\rho_A\left(gHg^{-1} \cap G_C\right) \subseteq g_1 \rho_A(H) g_1^{-1} \cap G_C=\{1\},$$ since $g_1^{-1}G_Cg_1$ is a
proper parabolic subgroup of $G_A$. By Remark \ref{rem:non-tr_proj} and our hypothesis,  $H$ is not contained in a conjugate of
$G_A$ or $G_B$, therefore we can apply Lemma \ref{lem:H_large_mapsonto} to reach the necessary conclusion.

\textit{Case 2.} If $\rho_A(H) \cong \D_\infty$ then $G$ contains $2$-torsion, hence $G_v$ contains $2$-torsion for some $v \in V$, by Corollary \ref{cor:torfree}.
The latter is impossible in case (b) since the cyclic group of order $2$ is $\mathcal{I}^-$-generated but is not indicable. Hence we are in case (a), and we need to show that $H$ is large.

By Remark \ref{rem:non-tr_proj} and Lemma \ref{lem:virt_cyc-parab}, $\rho_A(H)$ has finite intersection  with any proper parabolic subgroup of $G_A$, hence
the infinite cyclic subgroup $N\leqslant \rho_A(H)$, of index $2$, will have trivial intersection  with each proper parabolic subgroup of $G_A$.
Set $H_1\coloneqq \rho_A^{-1}(N) \cap H$ and note that $|H:H_1|=2$, $H_1 \lhd H$ and $\rho_A(H_1)=N$. By the same argument as in Case 1, $gH_1g^{-1}\cap G_C=\{1\}$ for all $g \in G$.
If $H_1$ were contained in a conjugate of $G_A$ or $G_B$, then $H \leqslant \No_G(H_1)$ would be contained in a proper parabolic subgroup of $G$ by Corollary \ref{cor:norm_in_irred},
contradicting  our assumptions together with Remark \ref{rem:non-tr_proj}.
We also need to recall Lemma \ref{lem:card_of_fi}, which claims that $H_1$ is $\mathcal I$-generated.
Thus we see again that all of the conditions of Lemma \ref{lem:H_large_mapsonto}.(i) are satisfied, and so $H_1$ is large. And since $H_1$ has index $2$ in $H$, we are able to
conclude that $H$ is large as well.
\end{proof}

\begin{thm}\label{thm:main_crit}
Let $\mathcal{I}$ be a collection of cardinals and let  $\ga$ be a finite graph. Suppose that
$H \leqslant G=\GG$ is an $\mathcal{I}$-generated subgroup such that $H$ is not virtually cyclic and
$\esupp_\ga(H)$ is an irreducible subset of $V\ga$ with $|\esupp_\ga(H)|\ge 2$.
\begin{enumerate}
\item[(a)] If $\mathcal I$ is ample and $G_v$ is $\mathcal I$-locally profi for every $v \in V\ga$, then $H$ is large.
\item[(b)] If $G_v$ is $\mathcal{I}^-$-locally indicable for each $v \in V\ga$, then $H$ maps onto \ff.
\end{enumerate}
\end{thm}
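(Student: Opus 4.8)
The plan is to deduce both statements from Theorem~\ref{thm:preBC}, replacing $H$ by a homomorphic image (produced by Proposition~\ref{P:comp}) that sits inside a graph product over a finite \emph{irreducible} graph with at least two vertices and projects non-trivially to every vertex group. Write $E\coloneqq\esupp_\ga(H)$, which is well-defined since $\ga$ is finite, and which is irreducible with $|E|\ge 2$ by hypothesis. I first record that both conclusions pass from a quotient back to the group: if $H$ surjects onto $Q$ and $Q$ is large (resp.\ maps onto \ff), then so is $H$ --- in the large case one pulls back a finite-index subgroup of $Q$ mapping onto a non-abelian free group. Hence it suffices to exhibit a suitable quotient of $H$.

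The one genuine subtlety is that a homomorphic image of $H$ may fail to stay non-virtually-cyclic, so I must choose the finite input set $X$ of Proposition~\ref{P:comp} so that $\varphi$ cannot collapse $H$ too far. To this end I locate a finitely generated non-virtually-cyclic subgroup of $H$. By Lemma~\ref{lem:proj_on_esupp} the retraction $\rho_E$ is injective on $H$ and $\rho_E(H)\leqslant G_E$ is not contained in any proper parabolic subgroup of $G_E$; since $\ga_E$ is finite, irreducible and has at least two vertices, while $\rho_E(H)\cong H$ is not virtually cyclic, the Structure Theorem (Theorem~\ref{thm:structure}) excludes its cases (1)--(3) and forces case~\eqref{it:4}. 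Thus $\rho_E(H)$, and therefore $H$, contains a copy $F$ of \ff{} (one may also invoke Remark~\ref{rem:4-core}). Now choose a finite subset $X'\subseteq H$ with $\esupp_\ga(X')=E$ (Proposition~\ref{prop:fin_parab_cl}) and adjoin to it a two-element generating set of $F$, obtaining $X$. Monotonicity of essential support under inclusion, an immediate consequence of Corollary~\ref{cor:non-tr_parab_in_other}, gives $E=\esupp_\ga(X')\subseteq\esupp_\ga(X)\subseteq\esupp_\ga(H)=E$, so $\esupp_\ga(X)=E$, while $\gen{X}\supseteq F$ is not virtually cyclic.

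Next I apply Proposition~\ref{P:comp} to this $H$ and $X$. As $E=\esupp_\ga(X)$ is irreducible with at least two elements, the ``moreover'' clause produces a finite irreducible graph $\graphz$ with $|V\graphz|\ge 2$, a graph product $\graphz\mathfrak{K}$ whose vertex groups $K_t$ are each isomorphic to some $G_v$, and a homomorphism $\varphi\colon H\to\graphz\mathfrak{K}$ that is injective on $\gen{X}$, satisfies $\esupp_\graphz(\varphi(X))=V\graphz$, and has $\rho_{\{t\}}(\varphi(H))\neq\{1\}$ for every $t\in V\graphz$. Since $\varphi|_{\gen{X}}$ is injective, $\varphi(H)\supseteq\varphi(\gen{X})\cong\gen{X}$ is not virtually cyclic, and $\varphi(H)$ is $\mathcal{I}$-generated as an image of $H$. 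Each $K_t\cong G_v$ is $\mathcal{I}$-locally profi in case~(a) and $\mathcal{I}^-$-locally indicable in case~(b). All hypotheses of Theorem~\ref{thm:preBC} are therefore satisfied by $\varphi(H)\leqslant\graphz\mathfrak{K}$, whence $\varphi(H)$ is large in case~(a) and maps onto \ff{} in case~(b); the same holds for $H$ by the first paragraph.

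The main obstacle is precisely this control of non-virtual-cyclicity under $\varphi$: Proposition~\ref{P:comp} guarantees injectivity only on $\gen{X}$, not on all of $H$, so the reduction becomes useful only after $X$ has been enlarged to generate a non-virtually-cyclic (indeed, free of rank $2$) subgroup. It is the Structure Theorem, through case~\eqref{it:4}, that both makes this enlargement possible and certifies that $H$ actually contains the free subgroup needed to block the collapse.
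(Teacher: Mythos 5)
Your proof is correct and follows essentially the same route as the paper: reduce to $G_E$ via Lemma~\ref{lem:proj_on_esupp}, extract a finitely generated non-virtually-cyclic witness with full essential support via Theorem~\ref{thm:structure} and Remark~\ref{rem:4-core}, feed it into Proposition~\ref{P:comp}, and conclude with Theorem~\ref{thm:preBC}. The only cosmetic difference is your assembly of the finite set $X$ from Proposition~\ref{prop:fin_parab_cl} together with two free generators, where the paper simply takes a finite generating set of the subgroup $M$ supplied by Remark~\ref{rem:4-core}.
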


\begin{proof} According to Lemma \ref{lem:proj_on_esupp}, we can project everything onto $G_E$, where $E:=\esupp_\ga(H)$, and further
assume that $\esupp_\ga(H)=V\ga$, that is $H$ is not contained in any proper parabolic subgroup of $G$.

By Theorem \ref{thm:structure} and Remark \ref{rem:4-core}, $H$ contains a finitely generated subgroup $M$ such that $M$ is not virtually cyclic and
$\esupp_\ga(M)=V\ga$. Choose any finite generating set $X$ of $M$ and apply Proposition \ref{P:comp} to find a finite irreducible
graph $\graphz$ with at least $2$ vertices, a graph product $K=\graphz\mathfrak{K}$, and a homomorphism $\varphi:H \to K$
such that every vertex group $K_t$, $t \in V\graphz$, is isomorphic to some $G_v$, $v \in V\ga$, $\varphi$ is injective on $M$ and
$\varphi(H)$ projects non-trivially to each vertex group of $K$.

Since $\varphi(M) \cong M$ is not virtually cyclic, we can conclude that $\varphi(H)$ is not virtually cyclic. Therefore both
claims of the theorem now follow from Theorem \ref{thm:preBC}.
\end{proof}

We are now in a position to prove Theorems B and C.

\begin{proof}[Proofs of Theorems B and C]
The argument will proceed by induction on $|V\ga|$. Note that the base of our induction, $|V\ga|=1$,
holds because the vertex groups satisfy the corresponding form of Tits alternative.
Therefore we can suppose that
$|V\ga|\geq 2$ and the required statement has been proved for all (possibly different)
graph products over graphs with fewer vertices.

Observe that, in Theorem B, the Strong Tits Alternative for $G_v$ together with (P$5$) imply that
$G_v$ is $\mathcal I$-locally profi for all $v \in V\ga$. Similarly, in Theorem C, every vertex group $G_v$ will be  $\mathcal{I}^-$-locally indicable
by the Strongest Tits Alternative combined with (P$6$).

Let $H$ be an $\mathcal{I}$-generated subgroup of $G.$ By Theorem A we know that if $H$ does not contain a copy of \ff,
then $H$ belongs to $\mathcal{C}.$ Thus we can further suppose that $H$ is not virtually cyclic.

For every  non-empty proper subset $A\subsetneqq V\ga,$ $G_A=\ga_A\G_A$ is a graph product with fewer vertices than $G$,
where $\G_A\coloneqq \{G_v \in \G \mid v \in A\}$, and  $\rho_A(H)$ is $\mathcal{I}$-generated.
Then, by the induction hypothesis we know that either $\rho_A(H)$ is in $\mathcal{C}$ or is large (in Theorem B), or maps onto \ff{} (in Theorem C).
In the latter two cases $H$ inherits the needed properties from $\rho_A(H)$.

Summarizing, we can restrict to the situation when $H$ is not virtually cyclic and for all $A\subsetneqq V\ga,$
$\rho_A(H)$ belongs to $\mathcal{C}$.

If $\ga$ is not irreducible then there exist $A,B\subsetneqq V\ga$ such that $G=G_A\times G_B.$
In this case $H\leqslant \rho_A(H)\times\rho_B(H)$ belongs to $\mathcal{C}$ by (P$1$) and (P$2$). 

Suppose that $\ga$ is irreducible. If $E:=\esupp_\ga(H)\neq V\ga$, then $H\cong \rho_E(H)$ (see Lemma \ref{lem:proj_on_esupp})
belongs to $\mathcal C$. Thus we can assume that $\esupp_\ga(H)= V\ga$,
which enables us to apply Theorem \ref{thm:main_crit} and achieve that either $H$ is large (in Theorem B) or $H$ maps onto \ff{} (in Theorem  C).
\end{proof}

\section{A Rips-type construction and two examples}\label{S:example}
The goal of this section  is to show that the claims of Corollaries  \ref{cor:2-gen} and \ref{cor:RAAG} from the Introduction cannot be improved in an obvious way.
For instance, it is natural to ask, whether, after dropping the torsion-freeness assumption from Corollary \ref{cor:2-gen}, one can still describe the structure of an arbitrary $2$-generated subgroup.
The example below shows that this may not be easy, as such a subgroup can be large with infinite center; in  particular, it will not be virtually free.

\begin{Ex}\label{ex:2-gen_non_virt_cyc} Let $\ga$ be the simple path of length $2$ (with $3$ vertices and $2$ edges), and let $G$ be the graph product with respect to $\ga$,
such that the vertex group in the middle is $\Z$ and the vertex groups at the end-points are $\Z/3\Z$.
Thus $G$ has the presentation $\langle a,b,c \, \|\, ac=ca,bc=cb, a^3=b^3=1 \rangle$. Consider the $2$-generated subgroup $H = \gen{ac,bc} \leqslant G$. It is not difficult
to see that $H$ is isomorphic to the group given by the presentation $\langle x,y \,\|\, x^3=y^3\rangle$ (under the map sending  $x$ to  $ac$ and $y$ to $bc$). It follows that $H$
has infinite cyclic center generated by $c^3$, and the quotient of $H$ by its center is isomorphic to the free product $\gen{a}*\gen{b}\cong \Z/3\Z*\Z/3\Z$, which is
virtually a non-abelian free group.
\end{Ex}

In order to produce the second example we will use the following Rips-type construction, which could be of independent interest.

\begin{prop}\label{prop:Rips_constr} For every finitely generated recursively presented group $P$ there exists a finitely generated group $K$ and a finitely generated normal subgroup $N \lhd K$
such that all of the following hold:
\begin{itemize}
	\item $K/N \cong P$;
	\item $K$ is a subgroup of some finitely generated right angled Artin group $G$;
	\item $N$ is non-abelian, in particular both $N$ and $K$ map onto \ff.
\end{itemize}
\end{prop}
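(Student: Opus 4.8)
The plan is to reduce to the classical setting of a finitely presented quotient, run a Rips-type construction there, and then transport the conclusion back to $P$. First I would invoke the Higman Embedding Theorem to embed the finitely generated recursively presented group $P$ into a finitely presented group $Q=\gen{a_1,\dots,a_n \mid r_1,\dots,r_m}$; this reduction is natural because a Rips-type construction can only write down finitely many relators, so it is suited to $Q$ rather than to $P$ itself. Suppose I have produced a short exact sequence $1 \to N \to K \to Q \to 1$ with $N$ finitely generated and $K$ a subgroup of a finitely generated right angled Artin group $G$. I would then pull back along the inclusion $P \leqslant Q$: letting $K_P \leqslant K$ be the preimage of $P$ under the projection $K \twoheadrightarrow Q$, one has $N \lhd K_P$ and $K_P/N \cong P$. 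Since $N$ is finitely generated and $K_P/N \cong P$ is finitely generated by hypothesis, $K_P$ is finitely generated; moreover $K_P \leqslant K \leqslant G$, so $K_P$ is again a subgroup of the same right angled Artin group. Renaming $K_P$ as $K$ would then supply all the required data except the non-abelianity of $N$, which I will build into the construction.

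The core of the argument, and the main obstacle, is producing the sequence $1 \to N \to K \to Q \to 1$ with $N$ finitely generated and $K$ embeddable in a right angled Artin group. The naive fibre-product approach — taking $\{(u,v)\in F\times F \mid \pi(u)=\pi(v)\}$ inside the right angled Artin group $F\times F$, where $F=F(a_1,\dots,a_n)$ and $\pi\colon F \twoheadrightarrow Q$ is the presentation map — does give a subgroup of a right angled Artin group with quotient $Q$, but its kernel $\ker\pi \times \ker\pi$ is in general not finitely generated, so it does not suffice. Instead I would use a Rips-type construction: adjoin two generators $x_1,x_2$, set $N\coloneqq \gen{x_1,x_2}$, and impose relations of the shape $a_i^{\pm 1} x_j a_i^{\mp 1} = w_{ij}(x_1,x_2)$ together with $r_k = u_k(x_1,x_2)$, where the words $w_{ij}$ and $u_k$ are chosen so that $N$ is normal and $K/N \cong Q$. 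The delicate point is to select these defining words so that $K$ itself — not merely a finite-index subgroup — embeds into a finitely generated right angled Artin group, while $N$ remains finitely generated and non-abelian; this is where the bulk of the work lies, and it is here that one must control the geometry of $K$, for instance by arranging that the cube complex built from the Rips relations is compact special in the sense of Haglund and Wise, so that $K$ embeds into a right angled Artin group $G$.

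Finally, the two assertions about mapping onto \ff{} come for free from the theory already developed. By construction $N=\gen{x_1,x_2}$ can be taken non-abelian (the relators are chosen so that $x_1$ and $x_2$ do not commute in $K$), and $N \leqslant K \leqslant G$ with $G$ a finitely generated right angled Artin group. Hence Corollary~\ref{cor:RAAG} applies to $N$: being non-abelian, $N$ cannot be free abelian of finite rank, so it must map onto $\mathbb{F}_2$. Likewise $K \supseteq N$ is non-abelian, so Corollary~\ref{cor:RAAG} yields an epimorphism $K \twoheadrightarrow \mathbb{F}_2$ as well. The only genuinely hard input is the special Rips construction of the previous paragraph; the reduction to $Q$, the pull-back to $P$, and the final two epimorphisms are then formal, the last step being a direct appeal to Corollary~\ref{cor:RAAG}.
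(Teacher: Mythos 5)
Your plan stalls exactly where you placed ``the bulk of the work'', and that gap is not fillable as stated. You want a Rips-type construction in which $K$ itself --- not merely a finite-index subgroup --- embeds into a finitely generated right angled Artin group, and you propose to get this by arranging the cube complex of the Rips presentation to be compact special. But the available tool does not deliver this: the Haglund--Wise version of the Rips construction \cite[Thm.\ 10.1]{Haglund-Wise-0} produces a short exact sequence $1 \to N_1 \to H_1 \to Q \to 1$ in which $H_1$ is only \emph{virtually} special \cite[Thm.\ 5.9]{Haglund-Wise-0}, i.e.\ only a finite-index subgroup $H \leqslant H_1$ is known to embed in a right angled Artin group; making the small-cancellation complex special outright is precisely the hard point, and you give no construction achieving it. Once you are forced to pass to $H$, the image $\varphi(H)$ under $H_1 \twoheadrightarrow Q$ is merely a finite-index subgroup of $Q$, and there is no reason for it to contain the Higman-embedded copy of $P$; so your pull-back of $P$ along $K \twoheadrightarrow Q$ breaks down.

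The paper resolves this with one extra step that your proposal is missing: before running the Rips construction, it embeds the finitely presented group into an \emph{infinite} finitely presented group $R$ \emph{with no proper finite-index subgroups} \cite[Prop.\ 3.1]{A-L-M}. Then $\varphi(H)$, having finite index in $R$, is forced to equal all of $R$, so the full quotient survives the passage to the finite-index special subgroup and $P \leqslant R$ can be pulled back to $K := \varphi^{-1}(P)$, with $N = N_1 \cap H$ finitely generated as a finite-index subgroup of $N_1$. The same property of $R$ also powers the non-abelianity of $N$, which you merely assert (``the relators are chosen so that $x_1$ and $x_2$ do not commute''): the paper argues that if $N$ were abelian then, since finitely generated right angled Artin groups are linear over $\Z$ and, by Segal's theorem, solvable subgroups of ${\rm GL}_n(\Z)$ are closed in the profinite topology, $N$ would be an intersection of finite-index subgroups of $H$, making $H/N \cong R$ residually finite --- contradicting that $R$ is infinite with no proper finite-index subgroups. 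Your final appeal to Corollary~\ref{cor:RAAG} to obtain the epimorphisms of $N$ and $K$ onto \ff{} agrees with the paper, but the two essential ingredients --- the embedding into $R$ and the linearity/Segal argument --- are absent from your proposal, and without the first of them the construction you describe does not exist.
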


\begin{proof} First, using the celebrated Higman Embedding Theorem \cite{Higman}, we embed $P$ into a finitely presented group $O$. Next we apply \cite[Prop. 3.1]{A-L-M} to embed $O$
into an infinite finitely presented group $R$ such that $R$ has no proper finite index subgroups. Applying the Haglund-Wise modification of
Rips's construction (see \cite[Thm. 10.1]{Haglund-Wise-0}), we find a finitely generated group $H_1$ and a finitely generated normal subgroup $N_1 \lhd H_1$ such that
$H_1/N_1 \cong R$. Moreover, by Theorem 5.9 from \cite{Haglund-Wise-0}, we know that some finite index subgroup $H \leqslant H_1$
is a subgroup of some finitely generated right angled Artin group $G$. Let $\varphi_1\colon H_1 \to R$ be an epimorphism with
$\ker(\varphi_1)=N_1$. Denote by $\varphi\colon H \to R$ the restriction of $\varphi_1$ to $H$; then  $\varphi(H)$ will have finite index in $R$, hence $\varphi(H)=R$ by the construction of $R$.

Since $P \leqslant R$, we can define $K \leqslant H$ by $K:=\varphi^{-1}(P)$.
Note that $N\coloneqq \ker(\varphi)=N_1\cap H$ is finitely generated because it is a finite index subgroup of $N_1$, which is finitely generated, and $K/N \cong P$.
Since $P$ is finitely generated, it follows that $K$ is also finitely generated.

Suppose that $N$ is abelian. It is well-known that finitely generated right angled Artin groups are linear over $\Z$ (for instance, because they can be embedded into finitely generated
right angled Coxeter groups \cite[Cor. 3.6]{HsuWise}, and the standard geometric representation of the latter is a faithful representation by matrices with integer coefficients
\cite[Cor. 5.4]{Humphreys}). Hence $H \leqslant G$ can be considered as a subgroup of ${\rm GL}_n(\Z)$ for some $n \in \mathbb{N}$.
D. Segal proved (see \cite[Thm. 5, p. 61]{Segal}) that every solvable subgroup of ${\rm GL}_n(\Z)$ coincides with the intersection of finite index subgroups containing it
(in other words, it is closed in the profinite topology of that group).
Therefore the abelian subgroup $N$ will equal to an intersection of finite index subgroups of $H$, which is equivalent to saying that $H/N \cong R$ is residually finite.
But this is impossible because $R$ is an infinite group without proper subgroups of finite index. This contradiction shows that $N$ must be non-abelian, and so,
by Corollary  \ref{cor:RAAG}, both $N$ and $K$ possess epimorphisms onto \ff.
\end{proof}

Recall that the \textit{rank} $\rank(H)$, of a group $H$, is the least cardinality $\lambda$ such that
$H$ can be generated by a subset $X \subset H$ with $|X|=\lambda$. In the context of this work it is natural to define the \textit{free rank} $\frank(H)$, of $H$,
as the supremum of all cardinals $\lambda$ such that $H$ has an epimorphism onto a free group of rank $\lambda$. Evidently, $\frank(H)\le \rank(H)$, and in view of
Corollary \ref{cor:RAAG}, it makes sense to ask whether one can find a lower bound for $\frank(H)$ in terms of $\rank(H)$ for
subgroups $H$ of a given finitely generated right angled Artin group $G$. Note that for any non-trivial abelian subgroup $H \leqslant G$,
$\rank(H)\le r\coloneqq \rank(G) \in \N$ (see Corollary \ref{cor:rank_ab_in_RAAG}), and so
$\rank(H) -r+1 \le 1= \frank(H)$. However, the next statement shows that no such lower bound is possible in general.

Below $\omega=|\N|$ denotes the first infinite cardinal; thus $ \N\cup\{\omega\}$ is the set of all non-zero countable cardinals.

\begin{prop}\label{prop:ex_fr_rank} There exists a finitely generated right angled Artin group $G$, a natural number $n \in N$ and a collection of subgroups
$\{H_i \mid i \in \N\cup\{\omega\}\}$ of $G$
such that $\frank(H_i) \le n$ and $i \le \rank(H_i) \le i+n$ for all $i\in  \N\cup\{\omega\}$. In particular,
$\rank(H_\omega)=\omega$, $\rank(H_i)\in \N$ for all $i \in \N$, and $\rank(H_i) \to \infty$ as $i \to \infty$.
\end{prop}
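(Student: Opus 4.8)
The plan is to realise every $H_i$ inside one right angled Artin group by applying the Rips-type construction of Proposition~\ref{prop:Rips_constr} to a single, cleverly chosen quotient group $P$. I would take $P$ to be the lamplighter group $P=\Z/2\Z\wr\Z=\bigl(\bigoplus_{\Z}\Z/2\Z\bigr)\rtimes\Z$, which is finitely generated (indeed $2$-generated) and recursively presented. Inside $P$, for each $i\in\N$ let $Q_i\cong(\Z/2\Z)^i$ be the subgroup generated by $i$ of the ``lamp'' coordinates, and let $Q_\omega=\bigoplus_{\Z}\Z/2\Z$ be the full base group; then $\rank(Q_i)=i$ for all $i\in\N\cup\{\omega\}$ and, crucially, every $Q_i$ is locally finite. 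Proposition~\ref{prop:Rips_constr} applied to $P$ yields a finitely generated right angled Artin group $G$, a finitely generated subgroup $K\leqslant G$, a finitely generated non-abelian normal subgroup $N\lhd K$, and an epimorphism $\varphi\colon K\twoheadrightarrow P$ with $\ker\varphi=N$. I would set $n:=\rank(N)$ (note $n\ge 2$ since $N$ is non-abelian) and define $H_i:=\varphi^{-1}(Q_i)\leqslant G$ for each $i\in\N\cup\{\omega\}$.

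The rank bounds would then be read off the short exact sequence $1\to N\to H_i\to Q_i\to 1$. Since $H_i$ surjects onto $Q_i$, any generating set of $H_i$ maps onto one of $Q_i$, giving $\rank(H_i)\ge\rank(Q_i)=i$. Conversely, adjoining a finite generating set of $N$ to lifts of generators of $Q_i$ generates $H_i$, so $\rank(H_i)\le\rank(N)+\rank(Q_i)=n+i$. This establishes $i\le\rank(H_i)\le i+n$, and the three ``In particular'' assertions follow by taking $i\in\N$ and $i=\omega$ separately.

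The main obstacle is the upper bound $\frank(H_i)\le n$; this is where the choice of $P$ pays off. Suppose $H_i$ surjects onto a free group $\mathbb{F}_k$ with $k\ge 1$. Because $N$ is normal and finitely generated, its image in $\mathbb{F}_k$ is a finitely generated normal subgroup. If that image were trivial, the surjection would factor through $H_i/N\cong Q_i$; but a locally finite group admits no surjection onto a non-trivial free group, forcing $k=0$ and a contradiction. Hence the image of $N$ is non-trivial, and I would invoke the classical fact that a non-trivial finitely generated normal subgroup of a free group has finite index. Thus the image of $N$ is a finite-index subgroup of $\mathbb{F}_k$; in particular $\mathbb{F}_k$ is itself finitely generated, so $k$ is finite. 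Writing $m\ge 1$ for the index, the Nielsen--Schreier formula gives that this image is free of rank $1+m(k-1)\ge k$, while as a quotient of $N$ it has rank at most $\rank(N)=n$; combining these, $k\le 1+m(k-1)\le n$. Consequently every free quotient of $H_i$ has rank at most $n$, so $\frank(H_i)\le n$, and the proposition follows. The two points needing care are precisely this local-finiteness input and the structure theorem for finitely generated normal subgroups of free groups; both are standard, so once $P$ is fixed the remaining verifications are routine.
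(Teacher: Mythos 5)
Your proposal is correct and takes essentially the same route as the paper: the paper likewise applies Proposition~\ref{prop:Rips_constr} to a $2$-generated recursively presented wreath product (there $P=\Z\wr\Z$, with $H_i=\varphi^{-1}(R_i)$ for rank-$i$ subgroups $R_i$ of a copy of $\Z^{\omega}$), derives the bounds $i\le\rank(H_i)\le i+n$ from the extension $1\to N\to H_i\to R_i\to 1$, and obtains $\frank(H_i)\le n$ by the same dichotomy on the image of $N$ in a free quotient, using \cite[I.3.12]{L-S} and the Schreier index formula. Your substitution of the lamplighter group $\Z/2\Z\wr\Z$ with elementary abelian $Q_i$ is only a cosmetic variant: in the trivial-image case the paper bounds the free quotient's rank by $1$ because $R_i$ is abelian, whereas you get rank $0$ because $Q_i$ is torsion; everything else matches.
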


\begin{proof} Let $P:=\Z {\wr} \Z$ be the (restricted) wreath product of two infinite cyclic groups. Then $P$ is $2$-generated, recursively presented and contains a subgroup $Q$
which is isomorphic to the free abelian group $\Z^\omega$, of rank $\omega$.
According to Proposition \ref{prop:Rips_constr}, there is a finitely generated right angled Artin group $G$ and finitely generated subgroups $N,K \leqslant G$
such that $N\lhd K$ and $K/N \cong P$. Let $\varphi:K \to P$ denote an epimorphism with $\ker(\varphi)=N$.

For each $i \in \N$ choose a subgroup $R_i \le Q$ of rank $i$ and let $R_\omega\coloneqq Q$. Now, let $H_i$, $i \in \N\cup\{\omega\}$,
denote the full $\varphi$-preimage of $R_i$ in $K$ and set $n\coloneqq \rank(N) \in \N$.
Clearly $i=\rank(R_i) \le \rank(H_i) \le \rank(R_i)+\rank(N)=i+n$ for all $i \in \N\cup\{\omega\}$.

On the other hand, if $i \in \N\cup\{\omega\}$ and  $\psi$ is an epimorphism from $H_i$ onto some free group $\mathbb{F}$, then $\psi(N)$ is a finitely generated normal
subgroup of $\mathbb{F}$. If $\psi(N)=\{1\}$ then $\psi$  factors through the restriction of $\varphi$ to $H_i$, i.e., $\mathbb{F}$
is a homomorphic image of $\varphi(H_i)=R_i$, which is a free abelian group. Hence $\rank(\mathbb{F})\le 1 \le n$.

If $\psi(N) \neq \{1\}$ then $\psi(N)$ must have finite index in $\mathbb{F}$ (see \cite[I.3.12]{L-S}), hence, recalling Schreier index formula (\cite[I.3.9]{L-S}),
we obtain $\rank(\mathbb{F}) \le \rank(\psi(N))\le \rank(N)=n$.
Therefore $\frank(H_i) \le n$ for all  $i \in \N\cup\{\omega\}$, as claimed.
\end{proof}

\medskip

\noindent{\textbf{{Acknowledgments}}}

The authors are grateful to Martin Bridson, Martin Dunwoody, Ian Leary, Armando Martino and Alexander Ol'shanskii for helpful conversations.
We also thank the referee for his/her suggestions.

\end{document}